\newcommand\reallywidehat[1]{%
\savestack{\tmpbox}{\stretchto{%
  \scaleto{%
    \scalerel*[\widthof{\ensuremath{#1}}]{\kern-.6pt\bigwedge\kern-.6pt}%
    {\rule[-\textheight/2]{1ex}{\textheight}}
  }{\textheight}%
}{0.5ex}}%
\stackon[1pt]{#1}{\tmpbox}
}
\newtheorem{Theorem}{Theorem}
\newtheorem{Lemma}[Theorem]{Lemma}		
\newtheorem{Proposition}[Theorem]{Proposition}		
\newtheorem{Corollary}[Theorem]{Corollary}	
\theoremstyle{definition}
\newtheorem{Remark}[Theorem]{Remark}	 
\newtheorem{Definition}[Theorem]{Definition} 
\newcommand{\C}{\mathbb{C}} 
\newcommand{\R}{\mathbb{R}} 
\newcommand{\Z}{\mathbb{Z}} 
\newcommand{\N}{\mathbb{N}} 
\newcommand{\RT}{\operatorname{Re}}
\newcommand{\IT}{\operatorname{Im}}
\newcommand{\iu}{\mathrm{i}}
\newcommand{\eu}{\mathrm{e}}
\DeclareMathOperator{\sign}{sign}
\DeclareMathOperator{\spann}{span}
\DeclareMathOperator{\range}{range}
\begin{document}

\title[Global continua of solutions to the Lugiato-Lefever model]{Global continua of solutions to the Lugiato-Lefever model for frequency combs obtained by two-mode pumping}

\author{Elias Gasmi}
\address{E. Gasmi\hfill\break 
Institute for Analysis,\hfill\break
Karlsruhe Institute of Technology (KIT), \hfill\break
D-76128 Karlsruhe, Germany}
\email{elias.gasmi@kit.edu}

\author{Tobias Jahnke}
\address{T. Jahnke\hfill\break 
Institute for Applied and Numerical Mathematics,\hfill\break
Karlsruhe Institute of Technology (KIT), \hfill\break
D-76128 Karlsruhe, Germany}
\email{tobias.jahnke@kit.edu}

\author{Michael Kirn}
\address{M. Kirn\hfill\break 
Institute for Applied and Numerical Mathematics,\hfill\break
Karlsruhe Institute of Technology (KIT), \hfill\break
D-76128 Karlsruhe, Germany}
\email{michael.kirn@kit.edu}

\author{Wolfgang Reichel}
\address{W. Reichel \hfill\break 
Institute for Analysis,\hfill\break
Karlsruhe Institute of Technology (KIT), \hfill\break
D-76128 Karlsruhe, Germany}
\email{wolfgang.reichel@kit.edu}

\date{\today} 
	
	\subjclass[2000]{Primary: 34C23, 34B15; Secondary: 35Q55, 34B60}


    \keywords{Nonlinear Schr\"odinger equation, bifurcation theory, continuation methods}

\begin{abstract} 
We consider Kerr frequency combs in a dual-pumped microresonator as time-periodic and spatially $2\pi$-periodic traveling wave solutions of a variant of the Lugiato-Lefever equation, which is a damped, detuned and driven nonlinear Schr\"odinger equation given by $\mathrm{i}a_\tau =(\zeta-\mathrm{i})a- d a_{x x}-|a|^2a+\mathrm{i}f_0+\mathrm{i}f_1\mathrm{e}^{\mathrm{i}(k_1 x-\nu_1 \tau)}$. The main new feature of the problem is the specific form of the source term $f_0+f_1\mathrm{e}^{\mathrm{i}(k_1 x-\nu_1 \tau)}$ which describes the simultaneous pumping of two different modes with mode indices $k_0=0$ and $k_1\in \N$. We prove existence and uniqueness theorems for these traveling waves based on a-priori bounds and fixed point theorems. Moreover, by using the implicit function theorem and bifurcation theory, we show how non-degenerate solutions from the $1$-mode case, i.e. $f_1=0$, can be continued into the range $f_1\not =0$. Our analytical findings apply both for anomalous ($d>0$) and normal ($d<0$) dispersion, and they are illustrated by numerical simulations.
\end{abstract}

\maketitle

\section{Introduction} 

Optical frequency comb devices are extremely promising in many applications such as, e.g., optical frequency metrology \cite{Udem2002}, spectroscopy \cite{picque2019frequency,yang2017microresonator}, ultrafast optical ranging \cite{trocha2018ultrafast}, and high capacity optical communications \cite{marin2017microresonator}. For many of these applications the Kerr soliton combs are generated by using a monochromatic pump. However, recently new pump schemes have been discussed, where more than one resonator mode is pumped, cf. \cite{Taheri_2017}. The pumping of two modes can have a number of important advantages. In particular, $1$-solitons arising from a dual-pump scheme can be spectrally broader and spatially more localized than $1$-solitons arising from a monochromatic pump, cf. \cite{Gasmi_Peng_Koos_Reichel} for a comprehensive discussion of the theoretical advantages. Mathematically, Kerr comb dynamics are described by the Lugiato-Lefever equation (LLE), a damped, driven and detuned nonlinear Schr\"odinger equation \cite{Godey_et_al2014,Lugiato_Lefever1987,Parra-Rivas2018}. Our analysis relies on a variant of the LLE which is modified for two-mode pumping, cf. \cite{Taheri_2017} and \cite{Gasmi_Peng_Koos_Reichel} for a derivation. Using dimensionless, normalized quantities this equation takes the form
\begin{equation}\label{PDE}
\mathrm{i}a_\tau =(\zeta-\mathrm{i})a- d a_{x x}-|a|^2a+\mathrm{i}f_0+\mathrm{i}f_1\mathrm{e}^{\mathrm{i}(k_1 x-\nu_1 \tau)}, \qquad a \ 2\pi\text{-periodic in } x.
\end{equation}
Here, $a(\tau,x)$ represents the optical intracavity field as a function of normalized time $\tau=\frac{\kappa}{2}t$ and angular position $x\in[0,2\pi]$ within the ring resonator. The constant $\kappa>0$ describes the cavity decay rate and $d=\frac{2}{\kappa}d_2$ quantifies the dispersion in the system (where $\omega_k = \omega_0+d_1k+d_2k^2$ is the cavity dispersion relation between the resonant frequencies $\omega_k$ and the relative indices $k\in\Z$). Here, the case $d<0$ amounts to normal and the case $d>0$ to anomalous dispersion. The resonant modes in the cavity are numbered by $k\in \Z$ with $k_0=0$ being the first and $k_1\in\N$ the second pumped mode. With $f_0,f_1$ we describe the normalized power of the two input pumps and $\omega_{p_0},\omega_{p_1}$ denote the frequencies of the two pumps. Since there are now two pumped modes there are also two normalized detuning parameters denoted by $\zeta=\frac{2}{\kappa}(\omega_0-\omega_{p_0})$ and $\zeta_1=\frac{2}{\kappa}(\omega_{k_1}-\omega_{p_1})$. They describe the offsets of the input pump frequencies $\omega_{p_0}$ and $\omega_{p_1}$ to the closest resonance frequency $\omega_0$ and $\omega_{k_1}$ of the microresonator. The particular form of the pump term $\iu f_0+\iu f_1\eu^{\iu(k_1x-\nu_1 \tau)}$ with $\nu_1=\zeta-\zeta_1+d k_1^2$ suggests to change into a moving coordinate frame and to study solutions of \eqref{PDE} of the form $a(\tau,x)=u(s)$ with $s=x-\omega \tau$ and $\omega=\frac{\nu_1}{k_1}$. These traveling wave solutions propagate with speed $\omega$ in the resonator and their profiles $u$ solve the ordinary differential equation
\begin{equation}\label{TME}
-d u''+\mathrm{i} \omega u'+(\zeta-\mathrm{i})u-|u|^2 u+\mathrm{i}f_0+\mathrm{i}f_1 \mathrm{e}^{\mathrm{i}k_1 s}=0, \qquad u \ 2\pi\text{-periodic}.
\end{equation}
In the case $f_1=0$ equation \eqref{PDE} amounts to the case of pumping only one mode. This case has been thoroughly studied, e.g. in \cite{DelHara_periodic,Gaertner_et_al,Godey_2017,Godey_et_al2014,Mandel,Miyaji_Ohnishi_Tsutsumi2010,Parra-Rivas2018,Parra-Rivas2014,Parra-Rivas2016,Perinet,Stanislavova_Stefanov}. In this paper we are interested in the case $f_1\neq 0$. Since the specific form of the forcing term is not essential for many of our results, we allow in the following for more general forcing terms 
$$f(s)=f_0+ f_1 e(s)$$
with a $2\pi$-periodic (not necessarily continuous) function $e:\R\to \C$ and $f_0, f_1\in \R$.
Hence, we consider the LLE
\begin{equation}\label{TWE}
-d u''+\mathrm{i} \omega u'+(\zeta-\mathrm{i})u-|u|^2 u+\mathrm{i}f(s)=0, \qquad u \ 2\pi\text{-periodic}.
\end{equation}
Our main results on the existence of solutions to \eqref{TWE} are stated in Section~\ref{sec:main_results}. In Section~\ref{sec:numerical} we illustrate our main analytical results by numerical simulations. The proofs of the main results are given in Section~\ref{sec:a-priori} (a-priori bounds), Section~\ref{sec:existence_and_uniqueness} (existence and uniqueness), and Section~\ref{sec:continuation} (continuation results). The appendix contains a technical result and a consideration of the case where in \eqref{TME} the value $k_1$ is not an integer but close to an integer.

\section{Main Results} \label{sec:main_results}

In the following we state our main results.
\begin{itemize}
\item Theorem~\ref{Existence} provides existence of at least one solution of \eqref{TWE} for any choice of the parameters and any choice of $f$.
\item Theorem~\ref{Hauptresultat} and Corollary~\ref{Korollar} describe how trivial (constant) solutions from the special case $f_1=0$ can be continued into non-trivial solutions for $f_1\not =0$.
\item Theorem~\ref{Fortsetzung_nichttrivial} and Corollary~\ref{Fortsetzung_nichttrivial_Korollar} show how a non-trivial solution from the case $f_1=0$ can be continued to $f_1\not=0$.
\end{itemize}
Our first theorem, which ensures the existence of a solution of \eqref{TWE} in the general case where $f_1$ does not need to vanish, is based on a-priori bounds and a variant of Schauder's fixed point theorem known as Schaefer's fixed point theorem. A corresponding uniqueness result, which applies whenever $|\zeta|\gg 1$ is sufficiently large or (essentially) $\|f\|_2\ll 1$ is sufficiently small is given in Theorem~\ref{Uniqueness} in Section~\ref{sec:existence_and_uniqueness} together with more precise details.

We will use the following Sobolev spaces. For $k\in\N$ the space $H^k(0,2\pi)$ consists of all square-integrable 
functions on $(0,2\pi)$ whose weak derivatives up to order $k$ exist and are square-integrable on $(0,2\pi)$. By $H^k_{\text{per}}(0,2\pi)$ we denote all locally square-integrable $2\pi$-periodic functions on $\R$ whose weak derivatives up to order $k$ exist and are locally square-integrable on $\R$. In both spaces the norm is given by $\|u\|= \bigl(\sum_{j=0}^k \|(\frac{d}{ds})^j u\|_{L^2(0,2\pi)}^2 \bigr)^{1/2}$. Clearly $H^k_{\text{per}}(0,2\pi)$ is a proper subspace of $H^k(0,2\pi)$ since $u\in H^k_{\text{per}}(0,2\pi)$ implies that $(\frac{d}{ds})^{j}u(0)=(\frac{d}{ds})^{j}u(2\pi)$ for $j=0,\ldots,k-1$. Unless otherwise stated, all of the above Hilbert spaces are spaces of complex valued functions over the field $\R$. In particular, for $v,w\in L^2(0,2\pi)$ we use the inner product $\langle v,w\rangle_2\coloneqq\RT\int_0^{2\pi} v\overline w\,ds$. The induced norm is denoted by $\| \cdot \|_2$. 

\begin{Theorem}\label{Existence}
Equation \eqref{TWE} has at least one solution $u\in H^2_{\text{per}}(0,2\pi)$ for any choice of the parameters $d\in \R\setminus\{0\}$, $\zeta,\omega \in \R$ and any choice of $f\in H^2(0,2\pi)$. 
\end{Theorem}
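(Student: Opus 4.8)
The plan is to recast \eqref{TWE} as a fixed point equation and apply Schaefer's fixed point theorem on the Banach space $X=H^1_{\text{per}}(0,2\pi)$. First I would introduce the linear operator $L\colon H^2_{\text{per}}(0,2\pi)\to L^2(0,2\pi)$, $Lu=-d u''+\iu\omega u'+(\zeta-\iu)u$. Since $L$ is $\C$-linear it is diagonal in the Fourier basis $(\eu^{\iu k s})_{k\in\Z}$, acting on the $k$-th mode by multiplication with the symbol $dk^2-\omega k+\zeta-\iu$. Its imaginary part equals $-1$ for every $k$, so the symbol never vanishes, and moreover $|dk^2-\omega k+\zeta-\iu|\gtrsim 1+k^2$. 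Hence $L$ is boundedly invertible with $L^{-1}\colon L^2(0,2\pi)\to H^2_{\text{per}}(0,2\pi)$ bounded; this is exactly the point where the damping term $-\iu u$ is indispensable (and why $f\in L^2$, a fortiori $f\in H^2$, already yields solutions in $H^2_{\text{per}}(0,2\pi)$).

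Next I would define the nonlinear substitution operator $N(u)=|u|^2u-\iu f$ and set $T=L^{-1}\circ N$. Because $H^1_{\text{per}}(0,2\pi)$ embeds continuously into $C([0,2\pi])$ in one space dimension, $N$ maps $H^1_{\text{per}}(0,2\pi)$ continuously into $L^2(0,2\pi)$ and sends bounded sets to bounded sets; composing with $L^{-1}$ and using the compact Rellich embedding $H^2_{\text{per}}(0,2\pi)\hookrightarrow H^1_{\text{per}}(0,2\pi)$, the map $T\colon H^1_{\text{per}}(0,2\pi)\to H^1_{\text{per}}(0,2\pi)$ is continuous and compact. By construction $u=Tu$ is equivalent to $Lu=N(u)$, i.e. to \eqref{TWE}, and every such fixed point automatically lies in $H^2_{\text{per}}(0,2\pi)$.

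It then remains to verify the Schaefer hypothesis: the set of all $u\in H^1_{\text{per}}(0,2\pi)$ with $u=\lambda Tu$ for some $\lambda\in[0,1]$ is bounded, which is precisely the a-priori bound of Section~\ref{sec:a-priori}. Such $u$ solve $Lu=\lambda N(u)$. Testing this identity with $u$ and taking the imaginary part, the dispersive and transport contributions drop out and one is left with $\|u\|_2^2=\lambda\,\RT\int_0^{2\pi} f\overline u\,ds\le\|f\|_2\|u\|_2$, giving the $\lambda$-independent bound $\|u\|_2\le\|f\|_2$. The real part of the same identity yields an expression for $d\|u'\|_2^2$ in terms of $\|u\|_2$, the cubic quantity $\|u\|_4^4$, and $\int_0^{2\pi}u'\overline u\,ds$, the latter bounded by $\|u'\|_2\|u\|_2$.

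The main obstacle is to close this into an $H^1$ bound uniformly in $\lambda$ and in both dispersion regimes. For $d<0$ the cubic term enters with the favorable sign and may simply be discarded, leaving a quadratic inequality $|d|\|u'\|_2^2\le|\omega|\|u\|_2\|u'\|_2+|\zeta|\|u\|_2^2+\|f\|_2\|u\|_2$ with positive leading coefficient, hence a bound on $\|u'\|_2$. For $d>0$ the cubic term has the unfavorable sign and must be dominated by $d\|u'\|_2^2$; here I would invoke the one-dimensional Gagliardo–Nirenberg inequality $\|u\|_\infty^2\lesssim\|u\|_2\|u'\|_2+\|u\|_2^2$ to estimate $\|u\|_4^4\le\|u\|_\infty^2\|u\|_2^2$, which makes the cubic contribution only \emph{linear} in $\|u'\|_2$ and therefore again absorbable into $d\|u'\|_2^2$. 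In both cases one obtains a bound on $\|u'\|_2$ depending only on $d,\omega,\zeta$ and $\|f\|_2$, hence the required $H^1$ bound. Schaefer's theorem then furnishes a fixed point $u\in H^2_{\text{per}}(0,2\pi)$ solving \eqref{TWE}, completing the proof.
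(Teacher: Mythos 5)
Your proposal is correct, and the overall skeleton is the same as the paper's: the operator $L=-d\frac{d^2}{ds^2}+\iu\omega\frac{d}{ds}+\zeta-\iu$ with spectrum on $-\iu+\R$, the compact fixed point map on $H^1_{\text{per}}(0,2\pi)$, and Schaefer's theorem. Where you genuinely diverge is in how the uniform $H^1$ bound for the Schaefer set is obtained. The paper applies its Theorem~\ref{a-priori} after the substitution $v=\sqrt{\lambda}\,u$ (which converts $u=\lambda\Phi(u)$ back into the unmodified equation with forcing $\lambda^{3/2}f$), and the gradient bound there comes from differentiating the equation and testing with $\bar u'$, a computation that consumes $\|f'\|_\infty$ and $\|f''\|_2$ and yields the structured estimate $\|u'\|_2\le B\|u\|_2^{1/4}$. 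You instead keep $\lambda$ explicit and use only the real part of the once-tested, integrated identity $d\|u'\|_2^2=\omega\int_0^{2\pi}\IT(u'\bar u)\,ds-\zeta\|u\|_2^2+\lambda\|u\|_4^4+\lambda\IT\int_0^{2\pi}f\bar u\,ds$ (which the paper also derives, but only in Step~4 of its a-priori section, for a different purpose), discarding $\|u\|_4^4$ when $d<0$ and absorbing it via $\|u\|_4^4\le\|u\|_\infty^2\|u\|_2^2\lesssim\|u\|_2^3\|u'\|_2+\|u\|_2^4$ when $d>0$. This closes to a quadratic inequality in $\|u'\|_2$ with positive leading coefficient in both dispersion regimes and is uniform in $\lambda\in[0,1]$, so the argument is sound. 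Your route is more elementary and in fact needs only $f\in L^2$ for the $H^1$ bound; what it does \emph{not} deliver is the fractional-power dependence $\|u'\|_2\le B\|u\|_2^{1/4}$ and the refined bound $D$ for $\zeta\sign(d)\ll 0$, which the paper's heavier computation is designed to produce and which are essential later for the uniqueness statement (Theorem~\ref{Uniqueness}) and Corollary~\ref{Korollar}. For the existence statement alone, your argument is complete.
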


Next we address the question whether a known solution $u_0$ of \eqref{TWE} for $f_1=0$ can be continued into the regime $f_1\not =0$. This continuation will be done differently depending on whether $u_0$ is constant (trivial) or non-constant (non-trivial). Moreover, we first concentrate on one-sided continuations for $f_1>0$ (or $f_1<0$). Two-sided continuations will be discussed in Section~\ref{sec:two_sided}.

\subsection{One-sided continuation of trivial solutions}

In the special case $f_1=0$ there are trivial (constant) solutions $u_0 \in \C$ of \eqref{TWE} satisfying the algebraic equation 
\begin{equation} \label{trivial}
(\zeta-\mathrm{i})u_0-|u_0|^2 u_0+\mathrm{i}f_0=0.
\end{equation}
From \cite[Lemma 2.1]{Mandel} we know that for given $f_0 \in \R$ the curve of constant solutions can be parameterized by 
\begin{equation} \label{parametrization}
\zeta(t)=(1-t^2)f_0^2+\frac{t}{\sqrt{1-t^2}}, \quad u_0(t)=(1-t^2)f_0-\mathrm{i} f_0 t \sqrt{1-t^2}, \quad t\in(-1,1).
\end{equation}
In Figure~\ref{fig:trivial} we show the curve of the squared $L^2$-norm of all constant solutions of \eqref{TWE} for $f_1=0$ and $f_0=1$, $f_0=\frac{2\sqrt{2}}{\sqrt[4]{27}}$ and $f_0=2$. The curve may or may not have turning points which are characterized by $\zeta'(t)=0$. This condition can be formulated independently of $t$ by the equivalent condition $\zeta^2-4|u_0|^2 \zeta+1+3 |u_0|^4=0$. 
By a straightforward analysis one can show that with $f^*=\frac{2\sqrt{2}}{\sqrt[4]{27}}$ we have 
\begin{itemize}
\item no turning point for $|f_0|<f^*$ (cf. Figure~\ref{fig:trivial} green curve),
\item exactly one (degenerate) turning point for $|f_0|=f^*$ (cf. Figure~\ref{fig:trivial} red curve),
\item exactly two turning points for $|f_0|>f^*$ (cf. Figure~\ref{fig:trivial} blue curve).
\end{itemize}
\begin{wrapfigure}[14]{r}{0.65\textwidth}
\centering
\includegraphics[width=0.5\textwidth]{./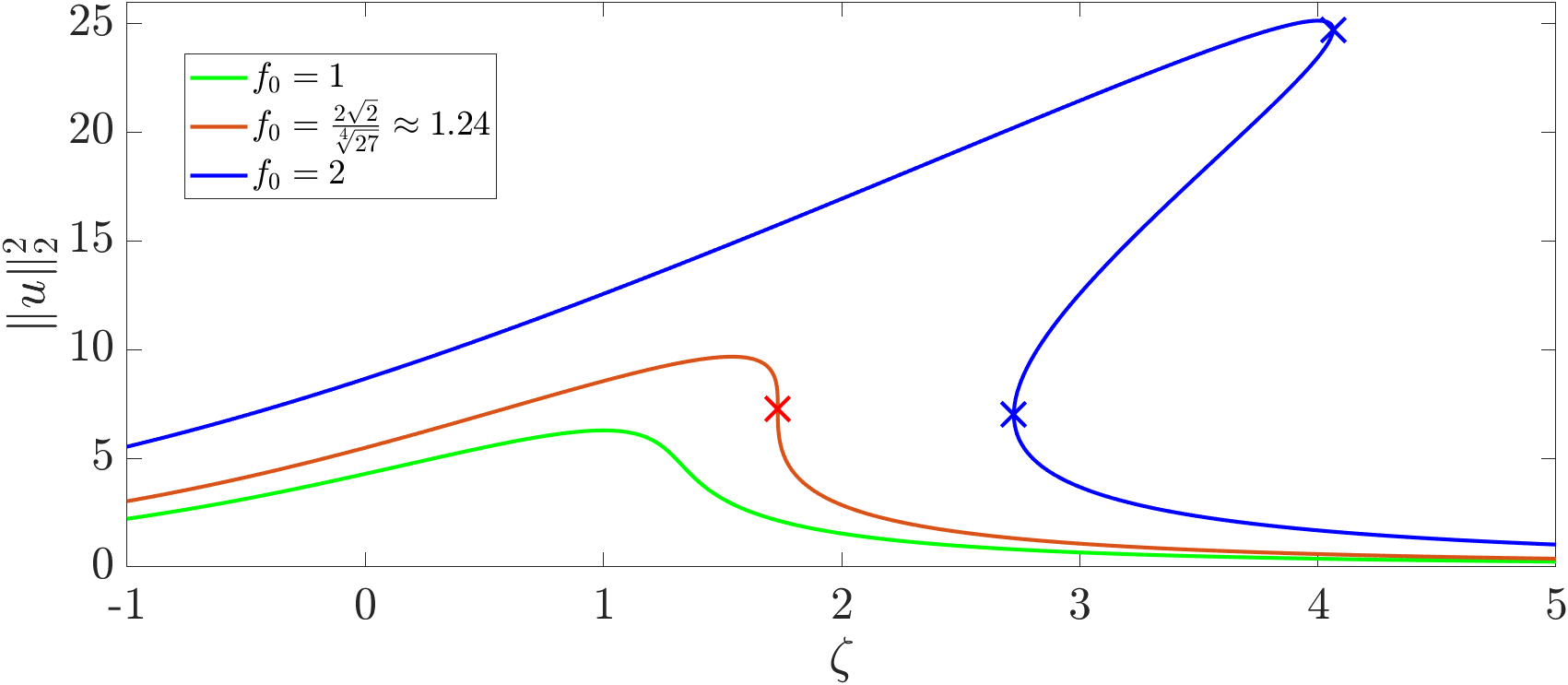} 
\caption{Curve of squared $L^2$-norm of all constant solutions of \eqref{TWE} for $f_1=0$ and $f_0=1$ (green), $f_0=\frac{2\sqrt{2}}{\sqrt[4]{27}}$ (red) and $f_0=2$ (blue) when $\zeta \in [-1,5]$. Turning points (if they exist) are marked with a cross.}
\label{fig:trivial}
\end{wrapfigure}

\noindent Note that for $|f_0|>f^*$, as a consequence of the existence of two turning points, three different constant solutions exist for certain values of $\zeta$. 

Starting from $f_1=0$ we use a kind of global implicit function theorem to continue a constant solution $u_0 \in \C$ of \eqref{TWE} with respect to $f_1$. This procedure is analyzed in Theorem~\ref{Hauptresultat}. The continuation works if the constant solution $u_0\in \C$ is non-degenerate in the following sense.

\begin{Definition} A solution $u\in H^2_{\text{per}}(0,2\pi)$ of \eqref{TWE} for $f_1=0$ is called non-degenerate if the kernel of the linearized operator 
$$
L_u\varphi \coloneqq -d \varphi'' +\iu\omega \varphi' +(\zeta-\iu-2|u|^2)\varphi-u^2\overline\varphi,  \quad \varphi\in H^2_{\text{per}}(0,2\pi)
$$
consists only of $\spann\{u'\}$.
\label{non_degenerate_solution}
\end{Definition}

\begin{Remark} \label{fredholm_etc} Note that $L_u: H^2_{\text{per}}(0,2\pi) \to L^2(0,2\pi)$ is a compact perturbation of the isomorphism $-d\frac{d^2}{dx^2}+\sign(d): H^2_{\text{per}}(0,2\pi) \to L^2(0,2\pi)$ and hence an index-zero Fredholm operator. Notice also that $\spann\{u'\}$ always belongs to the kernel of $L_u$. Non-degeneracy means that except for the obvious candidate $u'$ (and its real multiples) there is no other element of the kernel of $L_u$. Notice also that a constant solution $u_0$ is non-degenerate if the linearized operator $L_{u_0}$ is injective, and, as a consequence, invertible in suitable spaces.   
\end{Remark}

\begin{Lemma} A trivial solution $u_0\in\C$ of \eqref{TWE} for $f_1=0$ is non-degenerate if and only if
\begin{itemize}
\item[(a)] Case $\omega\not=0$: 
$$
\zeta^2-4|u_0|^2 \zeta+1+3 |u_0|^4 \neq 0.
$$
\item[(b)] Case $\omega=0$: 
$$
(\zeta+d m^2)^2-4|u_0|^2(\zeta+d m^2)+1+3|u_0|^4\neq 0 \quad \text{ for all } m\in \N_0.
$$
\end{itemize}
\label{charactrization_nondeg}
\end{Lemma}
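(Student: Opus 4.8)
\emph{Proof plan.} Since $u_0$ is constant we have $u_0'=0$, so by Remark~\ref{fredholm_etc} non-degeneracy is equivalent to injectivity of $L_{u_0}$. The plan is to decompose $L_{u_0}$ with respect to the Fourier basis $\{\eu^{\iu m s}\}_{m\in\Z}$ of $L^2(0,2\pi)$. Writing $\varphi=\sum_{m\in\Z} c_m \eu^{\iu m s}$, the only term in $L_{u_0}$ that is not $\C$-linear is $-u_0^2\overline\varphi$, whose $m$-th Fourier coefficient is $-u_0^2\overline{c_{-m}}$. Hence $L_{u_0}$ leaves each subspace $\spann_\C\{\eu^{\iu m s},\eu^{-\iu m s}\}$ invariant, and since these subspaces are mutually orthogonal and span $L^2$, and since any kernel element supported on a single block is a trigonometric polynomial and thus lies in $H^2_{\text{per}}(0,2\pi)$, injectivity of $L_{u_0}$ is equivalent to injectivity on each block. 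First I would record the symbol $A_m \coloneqq dm^2-\omega m+\zeta-\iu-2|u_0|^2$, so that the $m$-th Fourier coefficient of $L_{u_0}\varphi$ equals $A_m c_m-u_0^2\overline{c_{-m}}$.

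For fixed $m$ the block $\{m,-m\}$ yields the two equations $A_m c_m-u_0^2\overline{c_{-m}}=0$ and $A_{-m}c_{-m}-u_0^2\overline{c_m}=0$, which couple $c_m$ with $\overline{c_{-m}}$. I would therefore pass to the variables $x=c_m$, $y=\overline{c_{-m}}$; conjugating the second equation converts the (merely real-linear) system into the genuinely $\C$-linear system
\begin{align*}
A_m x-u_0^2 y&=0,\\
-\overline{u_0}^2 x+\overline{A_{-m}}\,y&=0.
\end{align*}
Since a $\C$-linear map $\C^2\to\C^2$ has real determinant $|\det_\C|^2$, the real kernel of the block is nontrivial if and only if $\det_\C=A_m\overline{A_{-m}}-|u_0|^4=0$ (for $m=0$ the block is one-dimensional and the same formula applies). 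A short computation, writing $A_m=P_m-\iu$ with the real number $P_m=dm^2-\omega m+\zeta-2|u_0|^2$, gives $A_m\overline{A_{-m}}=P_mP_{-m}+1-2\iu\omega m$. As $|u_0|^4$ is real, the degeneracy condition $A_m\overline{A_{-m}}=|u_0|^4$ splits into the imaginary part $\omega m=0$ and the real part $P_mP_{-m}+1=|u_0|^4$.

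It then remains to read off the two cases. If $\omega\neq0$, the imaginary part forces $m=0$, so only that block can be degenerate; evaluating the real part at $m=0$, where $P_0=\zeta-2|u_0|^2$, yields $\zeta^2-4|u_0|^2\zeta+1+3|u_0|^4=0$, and non-degeneracy is exactly the negation of this, i.e.\ (a). If $\omega=0$, then $P_m=P_{-m}=\zeta+dm^2-2|u_0|^2$ and the imaginary part vanishes identically, so block $m$ is degenerate precisely when $(\zeta+dm^2)^2-4|u_0|^2(\zeta+dm^2)+1+3|u_0|^4=0$; since this depends on $m$ only through $m^2$ it suffices to let $m$ range over $\N_0$, and non-degeneracy is the requirement that it fail for every such $m$, i.e.\ (b).

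The one point requiring care, and the main obstacle, is the real-linearity of $L_{u_0}$: because of the conjugation term the operator is not $\C$-linear and cannot simply be diagonalized mode by mode. Pairing mode $m$ with mode $-m$ and substituting $y=\overline{c_{-m}}$ is exactly the device that restores $\C$-linearity and legitimizes the determinant computation. I would also state explicitly the relation $\det_\R=|\det_\C|^2$, so that the vanishing of the real block determinant is equivalent to the complex equation $A_m\overline{A_{-m}}=|u_0|^4$, whose real and imaginary parts are the two scalar conditions used above.
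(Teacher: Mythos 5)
Your proposal is correct and follows essentially the same route as the paper: Fourier decomposition of a kernel element, pairing the modes $m$ and $-m$, conjugating one equation to obtain a $2\times 2$ complex-linear system in $c_m$ and $\overline{c_{-m}}$, and reading off cases (a) and (b) from the real and imaginary parts of the determinant condition. Your extra remarks on real- versus complex-linearity (the relation $\det_\R=|\det_\C|^2$) and on kernel elements of a single block lying in $H^2_{\text{per}}(0,2\pi)$ only make explicit points the paper leaves implicit.
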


\begin{proof} Let $\varphi\in H^2_{\text{per}}(0,2\pi)$ be in the kernel of the linearized operator, i.e., 
$$
-d \varphi'' +\iu\omega \varphi' +(\zeta-\iu-2|u_0|^2)\varphi-u_0^2\overline\varphi=0. 
$$
This implies that the Fourier coefficients $\varphi_m$ of the Fourier series
$\varphi=\sum_{m\in\Z} \varphi_m \mathrm{e}^{\mathrm{i}ms}$ have the property that
$$
(d m^2-\omega m+\zeta-\mathrm{i}-2|u_0|^2)\varphi_m -u_0^2 \overline{\varphi_{-m}}=0
$$
for all $m\in \Z$. If we also write down the complex conjugate of this equation
$$
 -\overline{u_0}^2 \varphi_{m}+(d m^2+\omega m+\zeta+\mathrm{i}-2|u_0|^2)\overline{\varphi_{-m}}=0
$$
then we see that non-degeneracy of $u_0$ is equivalent to the non-vanishing of the determinant for this two-by-two system in the variables $\varphi_m, \overline{\varphi_{-m}}$ for all $m\in\N_0$. Computing the determinant we obtain the condition
\begin{equation} \label{non_degenerate}
(\zeta+d m^2)^2-4|u_0|^2(\zeta+dm^2)+1+3|u_0|^4-\omega^2 m^2 -2\mathrm{i} \omega m\neq 0 \text{ for all } m\in\N_0.
\end{equation}
In the case $\omega\neq 0$ this is trivially satisfied for all $m\neq 0$ (because then the imaginary part is non-zero) and for $m=0$ by assumption (a) of the lemma. In the case $\omega=0$ condition \eqref{non_degenerate} can only be guaranteed by assumption (b).
\end{proof}

\begin{Remark} Trivial solutions of \eqref{TWE} for $f_1=0$ are determined by \eqref{trivial}. For $\omega\not=0$ all trivial solutions $u_0$ of \eqref{TWE} for $f_1=0$ are non-degenerate except those at the turning points described above. In the case $\omega=0$ all trivial solutions $u_0$ of \eqref{TWE} for $f_1=0$ are non-degenerate except those at the (potential) bifurcation points and the turning points. This is true (up to additional conditions ensuring transversality and simplicity of kernels) because the necessary condition for bifurcation w.r.t. $\zeta$ from the curve of trivial solutions is fulfilled if and only if the expression in (b) vanishes for at least one $m\in \N$, cf. \cite{Gaertner_et_al},\cite{Mandel}. 
\end{Remark}

\begin{Theorem}\label{Hauptresultat}
Let $d \in \R\setminus\{0\}$, $\zeta,\omega,f_0\in \R$ and $e \in H^2(0,2\pi)$ be fixed. Let furthermore $u_0 \in\C$ be a constant non-degenerate solution of \eqref{TWE} for $f_1=0$. Then the maximal continuum\footnote{A continuum is a closed and connected set.} $\mathcal{C}^+\subset [0,\infty) \times H^2_{\text{per}}(0,2\pi)$ of solutions $(f_1,u)$ of \eqref{TWE} with $(0,u_0)\in \mathcal{C}^+$ has the following properties:
\begin{itemize}
\item[(i)] locally near $(0,u_0)$ the set $\mathcal{C}^+$ is the graph of a smooth curve $f_1\mapsto (f_1,u(f_1))$,
\item[(ii)] $\mathcal{C}^+\cap [0,M]\times H^2_{\text{per}}(0,2\pi)$ is bounded for any $M>0$.
\end{itemize} 
Moreover, if $\operatorname{pr}_1(\mathcal{C}^+)$ denotes the projection of $\mathcal{C}^+$ onto the $f_1$-parameter component, then at least one of the following properties hold: 
\begin{enumerate}[(a)]
\item $\operatorname{pr}_1(\mathcal{C}^+)= [0,\infty)$,
\end{enumerate}
or 
\begin{enumerate}[(b)]
\item $\exists  u_0^+ \not = u_0: \,  (0,u_0^+) \in \mathcal{C}^+.$
\end{enumerate}
A maximal continuum  $\mathcal{C}^-\subset (-\infty,0]\times H^2_{\text{per}}(0,2\pi)$ with corresponding properties also exists.
\end{Theorem}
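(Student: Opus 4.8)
The plan is to reformulate \eqref{TWE} as a fixed point problem $u=K(f_1,u)$ for a compact map on $H^2_{\text{per}}(0,2\pi)$ and then to combine three ingredients: the implicit function theorem for the local assertion~(i), the a-priori bounds of Section~\ref{sec:a-priori} for~(ii), and a global Leray--Schauder continuation principle for the dichotomy (a)/(b). Writing $F(f_1,u)$ for the left-hand side of \eqref{TWE}, a solution of \eqref{TWE} is precisely a zero of $F(f_1,\cdot)$.

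Since $-d\tfrac{d^2}{ds^2}+\sign(d)\colon H^2_{\text{per}}(0,2\pi)\to L^2(0,2\pi)$ is an isomorphism (Remark~\ref{fredholm_etc}) with bounded inverse $R$, I would rewrite \eqref{TWE} as the equivalent fixed point equation
\[
u=K(f_1,u):=R\bigl(\sign(d)\,u-\iu\omega u'-(\zeta-\iu)u+|u|^2u-\iu f\bigr),\qquad f=f_0+f_1e.
\]
Each summand in the bracket defines a continuous map $H^2_{\text{per}}(0,2\pi)\to L^2(0,2\pi)$ that carries bounded sets to precompact ones (Rellich's theorem for the terms $u$ and $u'$, and the Banach-algebra property of $H^2_{\text{per}}(0,2\pi)$ together with the compact embedding into $L^2$ for the cubic term $|u|^2u$), so that $K\colon[0,\infty)\times H^2_{\text{per}}(0,2\pi)\to H^2_{\text{per}}(0,2\pi)$ is compact and smooth. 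A short computation gives $u-K(f_1,u)=R\,F(f_1,u)$, hence the zero sets coincide, and differentiating yields $D_u\bigl(\mathrm{id}-K\bigr)(f_1,u)=R\,L_u$ with $L_u$ as in Definition~\ref{non_degenerate_solution}.

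For~(i) I would use that $u_0$ is constant, so $u_0'=0$ and non-degeneracy means $\kernel L_{u_0}=\{0\}$; being an injective index-zero Fredholm operator (Remark~\ref{fredholm_etc}), $L_{u_0}$ is an isomorphism, and therefore so is $D_u(\mathrm{id}-K)(0,u_0)=R\,L_{u_0}$. The implicit function theorem then shows that near $(0,u_0)$ the solution set is exactly the graph of a smooth curve $f_1\mapsto(f_1,u(f_1))$, and that $(0,u_0)$ is an isolated zero of $\mathrm{id}-K(0,\cdot)$ with Leray--Schauder index $\pm1\neq0$. Assertion~(ii) is immediate from the a-priori bounds of Section~\ref{sec:a-priori}: any solution obeys $\|u\|_{H^2_{\text{per}}}\le\rho(\|f\|_2)$ with $\rho$ non-decreasing, while $\|f\|_2\le|f_0|\sqrt{2\pi}+f_1\|e\|_2$ stays bounded for $f_1\in[0,M]$; hence $\mathcal{C}^+\cap[0,M]\times H^2_{\text{per}}(0,2\pi)$ is bounded.

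Finally, for the dichotomy I would invoke the global continuation principle for the compact map $K$ on $[0,\infty)\times H^2_{\text{per}}(0,2\pi)$: since $(0,u_0)$ is an isolated zero of nonzero index, the maximal continuum $\mathcal{C}^+$ through $(0,u_0)$ is either unbounded or meets the boundary slice $\{0\}\times H^2_{\text{per}}(0,2\pi)$ in a second point $(0,u_0^+)$ with $u_0^+\neq u_0$ --- the latter being case~(b). If instead $\mathcal{C}^+$ is unbounded, then by~(ii) it cannot be unbounded over any bounded range of $f_1$, so $\operatorname{pr}_1(\mathcal{C}^+)$ is unbounded; being the continuous image of a connected set that contains $0$, it must equal $[0,\infty)$, which is case~(a). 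The continuum $\mathcal{C}^-$ is obtained identically with $[0,\infty)$ replaced by $(-\infty,0]$. I expect the main obstacle to be the careful, degree-theoretic invocation of the global principle at the \emph{boundary} point $(0,u_0)$: one must rule out that $\mathcal{C}^+$ is simultaneously bounded and meets $\{0\}\times H^2_{\text{per}}(0,2\pi)$ only at $(0,u_0)$. This is exactly where the homotopy invariance of the degree, combined with the a-priori bound, does the decisive work, and where one also absorbs any degenerate turning points along the branch that the local implicit function theorem alone cannot pass.
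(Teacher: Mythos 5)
Your proposal is correct and follows essentially the same route as the paper: a compact fixed-point reformulation of \eqref{TWE}, the implicit function theorem at the non-degenerate constant solution for (i), the a-priori bounds of Section~\ref{sec:a-priori} for (ii), and a Leray--Schauder global continuation theorem for the alternative (a)/(b). The only differences are cosmetic --- the paper inverts the full linear operator $L=-d\tfrac{d^2}{ds^2}+\iu\omega\tfrac{d}{ds}+\zeta-\iu$ and works in $H^1_{\text{per}}(0,2\pi)$ to gain compactness, whereas you invert $-d\tfrac{d^2}{ds^2}+\sign(d)$ and work in $H^2_{\text{per}}(0,2\pi)$; note only that the a-priori constants depend on $\|f'\|_\infty$ and $\|f''\|_2$ rather than on $\|f\|_2$ alone, which is harmless here since $e\in H^2(0,2\pi)$ is fixed and $f_1$ ranges over a bounded interval.
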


\begin{Remark} \label{Remark_interpretation}
If property (a) of Theorem~\ref{Hauptresultat} holds, then $\mathcal{C}^+$ is unbounded in the direction of the parameter $f_1\in [0,\infty)$ and hence this is an existence result for all $f_1 \in [0,\infty)$. Property (b) means that the continuum $\mathcal{C}^+$ returns to the $f_1=0$ line at a point $u_0^+ \not = u_0$.
\end{Remark}

\begin{Corollary}\label{Korollar} Property (a) in Theorem~\ref{Hauptresultat} holds in any of the following three cases,
\begin{itemize}
\item[(i)] $\displaystyle \sign(d)\zeta <-C(d,f_0)^2 \mathbf{1}_{d<0}-27\biggl(1+\frac{\pi f_0^2 |\omega|}{|d|}+\frac{\pi^2 f_0^4}{|d|}\biggr) C(d,f_0)^6$,
\item[(ii)] $\displaystyle \sign(d)\zeta >3C(d,f_0)^2+\frac{\omega^2}{4|d|}$,
\item[(iii)] $\displaystyle  \sqrt{3} C(d,f_0)<1$,
\end{itemize}
where
$$ 
C(d,f_0) =|f_0|(1+2\pi^2 f_0^2 |d|^{-1}). 
$$
In particular $|\zeta| \gg 1$ or $|f_0| \ll 1$ is sufficient. 
\end{Corollary}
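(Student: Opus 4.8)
The plan is to obtain property~(a) by excluding the alternative~(b) in Theorem~\ref{Hauptresultat}. Since that theorem guarantees that (a) or (b) holds, and since the point $u_0^+$ occurring in (b) is, by definition of $\mathcal{C}^+$, a solution of \eqref{TWE} at $f_1=0$, it suffices to prove that under each of (i)--(iii) the constant $u_0$ is the \emph{unique} solution of \eqref{TWE} for $f_1=0$ within the ball $\{\|u\|_\infty\le C(d,f_0)\}$ supplied by the a-priori bound of Section~\ref{sec:a-priori}. Every solution at $f_1=0$ obeys this bound, so $u_0^+$ does as well; uniqueness then forces $u_0^+=u_0$, contradicting (b), and (a) follows. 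Note that the three regimes match the hypotheses $|\zeta|\gg1$ or $\|f\|_2\ll1$ of the uniqueness result announced in Section~\ref{sec:existence_and_uniqueness}.

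For the uniqueness step I would take two solutions $u,v$ of \eqref{TWE} with $f_1=0$, both bounded by $C:=C(d,f_0)$ in $L^\infty$, and set $w:=u-v$. Subtracting the equations gives $L_0 w=g(u)-g(v)$, where $L_0:=-d\tfrac{d^2}{ds^2}+\iu\omega\tfrac{d}{ds}+(\zeta-\iu)$ and $g(z):=|z|^2z$. The decisive structural fact is that the Fourier symbol $p(m)=dm^2-\omega m+\zeta-\iu$ of $L_0$ satisfies $|p(m)|\ge1$ for every $m\in\Z$ because of the damping term $-\iu$; hence $\|w\|_2\le\|L_0 w\|_2=\|g(u)-g(v)\|_2$. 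With the elementary Lipschitz bound $|g(u)-g(v)|\le 3\max(|u|,|v|)^2|u-v|\le 3C^2|w|$ this yields $\|w\|_2\le 3C^2\|w\|_2$, so $w=0$ whenever $3C^2<1$. This is exactly case (iii), $\sqrt3\,C<1$.

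Case~(ii) refines the symbol bound: minimising (resp.\ maximising) $\RT p(m)=dm^2-\omega m+\zeta$ over $m\in\R$ shows $|\RT p(m)|\ge \sign(d)\zeta-\tfrac{\omega^2}{4|d|}$ for all $m$ as soon as the right-hand side is positive, whence $\|L_0^{-1}\|_{L^2\to L^2}\le(\sign(d)\zeta-\tfrac{\omega^2}{4|d|})^{-1}$ and the contraction estimate gives $w=0$ once $3C^2<\sign(d)\zeta-\tfrac{\omega^2}{4|d|}$; this is precisely condition~(ii). The same conclusion follows by testing $L_0w=g(u)-g(v)$ with $w$ in $\langle\cdot,\cdot\rangle_2$ and absorbing the drift term $\langle\iu\omega w',w\rangle_2$ through Young's inequality.

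The main obstacle is case~(i), the non-coercive regime $\sign(d)\zeta\ll0$. Here $\RT p(m)$ changes sign, so the near-resonant frequencies $|m|\approx\sqrt{|\zeta|/|d|}$ have $|p(m)|\approx1$ and the global estimates above no longer improve on (iii). The remedy I would pursue is a frequency-localised estimate: decompose $w=L_0^{-1}(g(u)-g(v))$ into a resonant band around $|m|\approx\sqrt{|\zeta|/|d|}$ and its complement. On the complement $|p(m)|$ is large and contributes only a small multiple of $\|g(u)-g(v)\|_2\le 3C^2\|w\|_2$, while on the (bounded) resonant band one exploits that $g(u)-g(v)\in H^1$ with controlled norm, so that its Fourier coefficients there are small for $|\zeta|$ large. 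Feeding in the a-priori bounds $\|u\|_2\le\sqrt{2\pi}|f_0|$ together with the accompanying $H^1$-bound on solutions—whose constants are the source of the factors $\tfrac{\pi f_0^2|\omega|}{|d|}$ and $\tfrac{\pi^2 f_0^4}{|d|}$—and tracking the cubic nonlinearity through the band, one balances the two contributions and obtains $w=0$ once $\sign(d)\zeta$ drops below a threshold of the stated form $-C^2\mathbf 1_{d<0}-27(1+\tfrac{\pi f_0^2|\omega|}{|d|}+\tfrac{\pi^2 f_0^4}{|d|})C^6$; the extra term $-C^2\mathbf 1_{d<0}$ should reflect the opposite sign of the leading $-d\|u'\|_2^2$ contribution in the $H^1$ estimate when $d<0$. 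Carrying out this quantitative splitting with the sharp constants is the part I expect to require the most care.
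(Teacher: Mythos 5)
Your overall strategy is exactly the paper's: Theorem~\ref{Hauptresultat} leaves only the alternatives (a) and (b), the point $u_0^+$ in (b) would be a second solution of \eqref{TWE} at $f_1=0$, so (a) follows once the solution of \eqref{TWE} with $f(s)=f_0$ is known to be unique. (The paper simply invokes Theorem~\ref{Uniqueness} and specialises the constants $F,B,C,\tilde D$ to $f=f_0$, which produces the three conditions of the corollary verbatim.) Your cases (ii) and (iii) also coincide with the paper's proof of Theorem~\ref{Uniqueness}: the symbol bounds $\|L^{-1}\|\le 1$ and $\|L^{-1}\|\le\bigl(\sign(d)(\zeta-\tfrac{\omega^2}{4d})\bigr)^{-1}$ together with the Lipschitz estimate $\bigl\||u|^2u-|v|^2v\bigr\|_2\le 3R^2\|u-v\|_2$ on the a-priori ball give the contraction, and your constants are correct.

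The genuine gap is case (i). Your frequency-localised decomposition around the resonant band $|m|\approx\sqrt{|\zeta|/|d|}$ is only sketched and, as sketched, does not close: to get smallness of the Fourier coefficients of $|u|^2u-|v|^2v$ on that band you would need to control $\bigl\|(|u|^2u-|v|^2v)'\bigr\|_2$, hence $\|w'\|_2$ in terms of $\|w\|_2$, an estimate you neither state nor prove, and it is not visible how the explicit threshold $-27\bigl(1+\tfrac{\pi f_0^2|\omega|}{|d|}+\tfrac{\pi^2 f_0^4}{|d|}\bigr)C^6$ would emerge from balancing the two regions. The paper's mechanism is different and does not localise in frequency at all: Step 4 of Theorem~\ref{a-priori} shows that for $\sign(d)\zeta<-C^2\mathbf{1}_{d<0}$ every solution satisfies the \emph{improved} bounds $\|u\|_2\le D$ and $\|u\|_\infty\le\bigl(\tfrac{F^{3/4}}{\sqrt{2\pi}}+\sqrt{2\pi}B\bigr)D^{1/4}$ with $D=\bigl(\tilde D/(-\sign(d)\zeta-C^2\mathbf{1}_{d<0})\bigr)^{2/3}$, obtained by integrating the real part of the equation tested with $\bar u$; this is also where the term $-C^2\mathbf{1}_{d<0}$ actually originates (from estimating $\|u\|_4^4\le C^2\|u\|_2^2$ when $d<0$), not from the sign of $-d\|u'\|_2^2$ in an $H^1$ estimate for the difference. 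One then reruns your case-(iii) contraction with the crude bound $\|L^{-1}\|\le1$ but the smaller radius $R=\bigl(\tfrac{F^{3/4}}{\sqrt{2\pi}}+\sqrt{2\pi}B\bigr)D^{1/4}$; the condition $3R^2<1$ becomes $-\sign(d)\zeta-C^2\mathbf{1}_{d<0}>\tfrac{27(F^{3/4}+2\pi B)^6\tilde D}{8\pi^3}$ (the factor $27$ arises from cubing), which after inserting the specialised constants is exactly condition (i). To complete your proof, replace the band decomposition by this improved a-priori bound.
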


\subsection{One-sided continuation of non-trivial solutions} 

One can ask the question whether also non-trivial (non-constant) solutions at $f_1=0$ may be continued into the regime of $f_1>0$. This depends on two issues: existence and non-degeneracy of a non-trivial solution of \eqref{TWE} for $f_1=0$. First we note that for $\omega=0$ there is a plethora of non-trivial solutions, cf. \cite{Gaertner_et_al},\cite{Mandel}. For $\omega\not =0$ we do not know whether non-trivial solutions exist for $f_1=0$. The fact that for $\omega\not =0$ there are no bifurcations from the curve of trivial solutions indicates that there may be no solutions other than the trivial ones. Although by the current state of understanding the hypotheses of Theorem~\ref{Fortsetzung_nichttrivial} (see below) can only be fulfilled for $\omega=0$, we allow in the following for general $\omega\in\R$.

\medskip

In order to describe the continuation from a non-degenerate non-trivial solution, let us first state some properties of \eqref{TWE} for $f_1=0$: if $u_0$ solves \eqref{TWE} for $f_1=0$ and if we denote its shifts by $u_\sigma(s)\coloneqq u_0(s-\sigma)$, then $u_\sigma$ also solves \eqref{TWE} for $f_1=0$. Hence 
$$
S:\left\{\begin{array}{rcl}
 \R & \to & \R \times H^2_\text{per}(0,2\pi), \vspace{\jot} \\
\sigma & \mapsto & (0,u_{\sigma})
\end{array}
\right.
$$
describes a trivial curve of solutions of \eqref{TWE} from which we wish to bifurcate at some point $(0,u_{\sigma_0})$. Recall also from non-degeneracy that $\ker L_{u_{\sigma}}=\spann\{u_\sigma'\}$. Since $L_{u_\sigma}^*$ also has a one-dimensional kernel, there exists $\phi_\sigma^*\in H^2_\text{per}(0,2\pi)$ such that $\ker L_{u_\sigma}^*=\spann\{\phi_\sigma^*\}$. Notice that $\phi_\sigma^*(s) = \phi_0^*(s-\sigma)$. Finally, $\sigma_0$ will be determined in such a way that there exists a unique solution $\xi_{\sigma_0}\in H^2_\text{per}(0,2\pi)$ of 
$$
L_{u_0}\xi_{\sigma_0} =-\iu e(\cdot+\sigma_0)
$$
with the property that $\xi_{\sigma_0}\perp_{L^2} u_0'$. Details of the construction of $\sigma_0$ and $\xi_{\sigma_0}$ will be given in Lemma~\ref{one_d_kernel}.

\begin{Theorem} \label{Fortsetzung_nichttrivial} Let $d \in \R\setminus\{0\}$, $\zeta,\omega,f_0\in \R$ and $e \in H^2(0,2\pi)$ be fixed. Let furthermore $u_0\in H^2_{\text{per}}(0,2\pi)$ be a non-trivial non-degenerate solution of \eqref{TWE} for $f_1=0$. If $\sigma_0\in \R$ satisfies
\begin{equation} \label{eq:sigma_0}
\IT \int_0^{2\pi}e(s+\sigma_0)\overline{\phi_0^\ast(s)}\,ds =0
\end{equation}
and
\begin{equation} \label{eq:transv}
\IT \int_0^{2\pi}e'(s+\sigma_0)\overline{\phi_0^\ast(s)}\,ds \neq 0
\end{equation}
then the maximal continuum $\mathcal{C}^+\subset [0,\infty) \times H^2_{\text{per}}(0,2\pi)$ of solutions $(f_1,u)$ of \eqref{TWE} with $(0,u_0)\in\mathcal{C}^+$ has the following properties:
\begin{itemize}
\item[(i)] there exists a smooth curve $C:[0,\delta) \to \mathcal{C}^+$ with $C(t)=(f_1(t), u(t))$, $\dot{f_1}(0)=1$, $C(0)=(0,u_{\sigma_0})$ such that locally near $(0,u_{\sigma_0})$ all solutions $(f_1,u)$ of \eqref{TWE} with $f_1\geq 0$ lie on the curve $S$ or on the curve $C$,
\item[(ii)] $\mathcal{C}^+\cap [0,M]\times H^2_{\text{per}}(0,2\pi)$ is bounded for any $M>0$.
\end{itemize} 
Moreover, if zero is an algebraically simple eigenvalue of $L_{u_0}$ and if furthermore
\begin{equation} \label{further_cond}
\begin{split} 
2\RT \int_0^{2\pi} \bigl(2u_0|&\xi_{\sigma_0}|^2+\overline{u_0} \xi_{\sigma_0}^2\bigr)\overline{\phi_0^*} \, ds \RT \int_0^{2\pi} \bigl(u_0' \overline{u_0}+2 u_0\overline{u_0'}\bigr) u_0' \overline{\phi_0^*} \, ds \\
&\neq  \biggl(\IT \int_0^{2\pi} e'(s+\sigma_0) \overline{\phi_0^*(s)} \, ds \biggr)^2,
\end{split}
\end{equation}
then there exists a connected set $\mathcal{C}^{+}_* \subset \mathcal{C}^+$ with $\operatorname{pr}_1(\mathcal{C}^{+}_*)\subset (0,\infty)$ and $(0,u_{\sigma_0})\in \overline{\mathcal{C}^{+}_*}$ which satisfies at least one of the following properties:
\begin{enumerate}[(a)]
\item $\operatorname{pr}_1(\mathcal{C}^+_*)=(0,\infty)$,
\end{enumerate}
or 
\begin{enumerate}[(b)]
\item $\exists u_0^+ \not = u_{\sigma_0}: \, (0, u_0^+) \in \overline{\mathcal{C}^{+}_*}$.
\end{enumerate}
A maximal continuum  $\mathcal{C}^-\subset (-\infty,0]\times H^2_{\text{per}}(0,2\pi)$ with corresponding properties also exists.
\end{Theorem}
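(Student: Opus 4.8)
The plan is to treat \eqref{TWE} as the zero set of the smooth map $F\colon[0,\infty)\times H^2_{\text{per}}(0,2\pi)\to L^2(0,2\pi)$, $F(f_1,u)=-du''+\iu\omega u'+(\zeta-\iu)u-|u|^2u+\iu f_0+\iu f_1 e$, and to perform a Lyapunov--Schmidt reduction adapted to the translation symmetry of the $f_1=0$ problem. Writing $u=u_\sigma+v$ with the phase condition $v\perp_{L^2}u_\sigma'$, the range part $(I-Q_\sigma)F=0$ (with $Q_\sigma$ the projection onto $\ker L_{u_\sigma}^*=\spann\{\phi_\sigma^*\}$) can be solved for $v=v(f_1,\sigma)$ by the implicit function theorem, since by Remark~\ref{fredholm_etc} $L_{u_{\sigma_0}}$ is an index-zero Fredholm operator whose restriction to $\{v\perp_{L^2}u_{\sigma_0}'\}$ is an isomorphism onto its range. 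Because $v\equiv0$ and $u_\sigma$ solves \eqref{TWE} when $f_1=0$, the scalar bifurcation equation $H(f_1,\sigma):=\langle F(f_1,u_\sigma+v(f_1,\sigma)),\phi_\sigma^*\rangle_2=0$ vanishes identically at $f_1=0$ and factors as $f_1\tilde H(f_1,\sigma)=0$ with $\tilde H(0,\sigma)=-g(\sigma)$, where $g(\sigma):=\IT\int_0^{2\pi}e(s+\sigma)\overline{\phi_0^*(s)}\,ds$. Hypotheses \eqref{eq:sigma_0} and \eqref{eq:transv} say precisely $g(\sigma_0)=0$ and $g'(\sigma_0)\neq0$, so the implicit function theorem produces a unique branch $\sigma=\sigma(f_1)$ and hence the curve $C(t)=(t,u_{\sigma(t)}+v(t,\sigma(t)))$ with $C(0)=(0,u_{\sigma_0})$ and $\dot f_1(0)=1$; the factorization shows that locally the full solution set with $f_1\ge0$ is exactly $S\cup C$. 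This gives (i).

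Part (ii) is the a-priori bound argument already used for Theorem~\ref{Hauptresultat}: for $f_1\in[0,M]$ the forcing $f=f_0+f_1e$ stays bounded, so the a-priori $H^2$-bounds of Section~\ref{sec:a-priori} are uniform on $\mathcal{C}^+\cap[0,M]\times H^2_{\text{per}}(0,2\pi)$, which yields the asserted boundedness.

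For the global statement I would recast $F=0$ as a compact fixed-point equation $u=\mathcal{K}(f_1,u)$ by inverting the isomorphism $-d\tfrac{d^2}{ds^2}+\sign(d)$ (whose inverse is compact on $L^2$), making Leray--Schauder degree and the Rabinowitz global bifurcation theorem available. The crux is that along $C$ the simple eigenvalue $0$ of $L_{u_0}$ perturbs to an eigenvalue $\lambda(f_1)$ of the linearization crossing $0$ transversally. Algebraic simplicity guarantees both that $\lambda(f_1)$ is smooth and that $\langle u_{\sigma_0}',\phi_{\sigma_0}^*\rangle_2\neq0$, and a first-order perturbation computation then gives
\[
\lambda'(0)=\frac{C^2-2AB}{C\,\langle u_{\sigma_0}',\phi_{\sigma_0}^*\rangle_2},
\]
where $A,B$ are the two real integrals and $C=\IT\int_0^{2\pi}e'(s+\sigma_0)\overline{\phi_0^*(s)}\,ds=g'(\sigma_0)$ occurring in \eqref{further_cond}; here one uses $\dot u(0)=(A/C)\,u_{\sigma_0}'+\xi_{\sigma_0}(\cdot-\sigma_0)$ (from $\sigma'(0)=-A/C$ and $\partial_{f_1}v(0,\sigma_0)=\xi_{\sigma_0}(\cdot-\sigma_0)$), the symmetry of the second derivative of $-|u|^2u$, and the identity $\langle D^2(-|u|^2u)[u_{\sigma_0}',\xi_{\sigma_0}(\cdot-\sigma_0)],\phi_{\sigma_0}^*\rangle_2=g'(\sigma_0)$, which drops out of differentiating $L_{u_\sigma}u_\sigma'=0$ in $\sigma$. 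Thus \eqref{further_cond} is exactly $\lambda'(0)\neq0$: the eigenvalue crosses $0$ with nonzero speed, the Leray--Schauder index of the solution on $C$ jumps as $f_1$ increases through $0$, and Rabinowitz's alternative furnishes a connected set $\mathcal{C}^+_*\subset\{f_1>0\}$ with $(0,u_{\sigma_0})\in\overline{\mathcal{C}^+_*}$ that is either unbounded—whence $\operatorname{pr}_1(\mathcal{C}^+_*)=(0,\infty)$ by (ii)—or returns to $f_1=0$ at some $(0,u_0^+)$ with $u_0^+\neq u_{\sigma_0}$. The continuum $\mathcal{C}^-$ is obtained by the identical argument on $(-\infty,0]$.

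The main obstacle is the translation symmetry: at $f_1=0$ the trivial solutions form the whole orbit $S$, every point of which is degenerate, so Rabinowitz's theorem cannot be applied to an isolated trivial branch and a Leray--Schauder index is not directly available at $f_1=0$. One must use $g'(\sigma_0)\neq0$ to isolate the bifurcation point along $S$, run the degree argument in the open half-space $f_1>0$ where the solutions on $C$ are non-degenerate (all eigenvalues of $L_{u(f_1)}$ other than $\lambda(f_1)$ stay away from $0$ for small $f_1$), and control the closure as $f_1\to0^+$. The accompanying technical crux is the second-order Lyapunov--Schmidt computation producing the discriminant $C^2-2AB$; verifying the value of $\dot u(0)$ and the identity $\langle D^2(-|u|^2u)[u_{\sigma_0}',\xi_{\sigma_0}(\cdot-\sigma_0)],\phi_{\sigma_0}^*\rangle_2=g'(\sigma_0)$ is where the bookkeeping is heaviest.
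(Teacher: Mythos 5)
Your proposal is correct and follows essentially the same route as the paper: your hand-rolled Lyapunov--Schmidt reduction with bifurcation function $g(\sigma)=\IT\int_0^{2\pi}e(s+\sigma)\overline{\phi_0^*(s)}\,ds$ is exactly the Crandall--Rabinowitz argument the paper invokes (with $\sigma$ as bifurcation parameter), your formulas $\dot\sigma(0)=-A/C$ and $\lambda'(0)=(C^2-2AB)/(C\,\langle u_{\sigma_0}',\phi_{\sigma_0}^*\rangle_2)$ reproduce the paper's computation of $\dot\mu(0)$ and its identification of \eqref{further_cond} with transversal eigenvalue crossing, and running the global degree/continuation argument only at small $f_1>0$ and controlling the limit $f_1\to 0^+$ is precisely the paper's Step~3. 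The only points you leave implicit that the paper spells out are the spectral-perturbation (Kato) argument ensuring that no \emph{other} eigenvalue of $L_{u(t)}$ enters zero, the construction of $\mathcal{C}^+_*$ as the nested union of the continua emanating from the points $(\tilde f_1,u(\tilde f_1))$, and the use of the local uniqueness from (i) to conclude $u_0^+\neq u_{\sigma_0}$ in alternative (b).
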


\medskip

For the special choice $e(s)=\eu^{\iu k_1 s}$ Theorem \ref{Fortsetzung_nichttrivial} takes the following form. 

\begin{Corollary} \label{Fortsetzung_nichttrivial_Korollar} Let $k_1 \in \N$, $e(s)=\eu^{\iu k_1 s}$ and $d,\zeta,\omega,f_0,u_0$ be as in Theorem~\ref{Fortsetzung_nichttrivial}. Assume that
\begin{equation} \label{extra}
\int_0^{2\pi} \eu^{\iu k_1 s}\overline{\phi_0^\ast(s)}\,ds \neq 0
\end{equation}
and that $\sigma_0\in\R$ satisfies
\begin{equation} \label{eq:sigma_0_Korollar}
\tan(k_1\sigma_0) = \frac{\int_0^{2\pi}\cos(k_1s)\IT \phi_0^*(s)-\sin(k_1 s)\RT \phi_0^*(s)\,ds}{\int_0^{2\pi}\sin(k_1s)\IT \phi_0^*(s)+\cos(k_1 s)\RT \phi_0^*(s)\,ds}.
\end{equation}
Then the conditions \eqref{eq:sigma_0} and \eqref{eq:transv} of Theorem~\ref{Fortsetzung_nichttrivial} hold.
\end{Corollary}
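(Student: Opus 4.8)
The plan is to reduce everything to a single complex number. With $e(s)=\eu^{\iu k_1 s}$ we have $e(s+\sigma_0)=\eu^{\iu k_1\sigma_0}\eu^{\iu k_1 s}$ and $e'(s+\sigma_0)=\iu k_1\eu^{\iu k_1\sigma_0}\eu^{\iu k_1 s}$, so both integrals appearing in \eqref{eq:sigma_0} and \eqref{eq:transv} are expressed through the single quantity
$$
I:=\int_0^{2\pi}\eu^{\iu k_1 s}\,\overline{\phi_0^*(s)}\,ds,
$$
which by hypothesis \eqref{extra} is nonzero. Concretely, the integral in \eqref{eq:sigma_0} equals $\eu^{\iu k_1\sigma_0}I$, while the one in \eqref{eq:transv} equals $\iu k_1\eu^{\iu k_1\sigma_0}I$.

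First I would check that the choice \eqref{eq:sigma_0_Korollar} is exactly condition \eqref{eq:sigma_0}. Expanding $\eu^{\iu k_1 s}=\cos(k_1 s)+\iu\sin(k_1 s)$ and $\overline{\phi_0^*}=\RT\phi_0^*-\iu\IT\phi_0^*$ and integrating yields
$$
\RT I=\int_0^{2\pi}\bigl(\cos(k_1 s)\RT\phi_0^*+\sin(k_1 s)\IT\phi_0^*\bigr)\,ds,\qquad \IT I=\int_0^{2\pi}\bigl(\sin(k_1 s)\RT\phi_0^*-\cos(k_1 s)\IT\phi_0^*\bigr)\,ds.
$$
Since $\IT(\eu^{\iu k_1\sigma_0}I)=\sin(k_1\sigma_0)\RT I+\cos(k_1\sigma_0)\IT I$, condition \eqref{eq:sigma_0} is equivalent to $\tan(k_1\sigma_0)=-\IT I/\RT I$, and comparing with the two displayed integrals shows this is precisely \eqref{eq:sigma_0_Korollar}: the numerator there equals $-\IT I$ and the denominator equals $\RT I$. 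In the borderline case $\RT I=0$ one has $\IT I\neq 0$ (since $I\neq 0$), and \eqref{eq:sigma_0_Korollar} is to be read as $\cos(k_1\sigma_0)=0$, which again gives \eqref{eq:sigma_0}; thus a solution $\sigma_0$ always exists.

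The key observation, which renders \eqref{eq:transv} automatic, is that the two integrals differ only by the factor $\iu k_1$. Using $\IT(\iu z)=\RT z$, the imaginary part of the integral in \eqref{eq:transv} equals $k_1\RT(\eu^{\iu k_1\sigma_0}I)$. Once \eqref{eq:sigma_0} forces $\IT(\eu^{\iu k_1\sigma_0}I)=0$, the number $\eu^{\iu k_1\sigma_0}I$ is real with modulus $|\eu^{\iu k_1\sigma_0}I|=|I|\neq 0$, so its real part is $\pm|I|\neq 0$. Hence the left-hand side of \eqref{eq:transv} equals $\pm k_1|I|\neq 0$, and \eqref{eq:transv} holds.

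Since all steps are explicit algebraic identities, I expect no genuine obstacle; the only point requiring a little care is the degenerate case $\RT I=0$ in the interpretation of the tangent in \eqref{eq:sigma_0_Korollar}. The essential input throughout is assumption \eqref{extra}: it simultaneously guarantees that the defining equation for $\sigma_0$ is solvable and that the transversality condition \eqref{eq:transv} cannot degenerate.
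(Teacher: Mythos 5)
Your proposal is correct and follows essentially the same route as the paper: both expand the shifted integrals via $\eu^{\iu k_1(s+\sigma_0)}=\eu^{\iu k_1\sigma_0}\eu^{\iu k_1 s}$, identify \eqref{eq:sigma_0_Korollar} with the vanishing of $\IT\bigl(\eu^{\iu k_1\sigma_0}\int_0^{2\pi}\eu^{\iu k_1 s}\overline{\phi_0^*}\,ds\bigr)$, and then deduce \eqref{eq:transv} from $\IT(\iu z)=\RT z$ together with \eqref{extra}. Your packaging of everything into the single complex number $I$ is only a cosmetic streamlining of the paper's argument.
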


\begin{Remark} \label{Fortsetzung_nichttrivial_Remark}
($\alpha$) It follows from the implicit function theorem that in the setting of Theorem~\ref{Fortsetzung_nichttrivial} Assumption \eqref{eq:sigma_0} is a necessary condition for bifurcation (non-trivial kernel of the linearization). Assumption \eqref{eq:transv} amounts to the transversality condition. In the setting of Corollary \ref{Fortsetzung_nichttrivial_Korollar} this means that, if \eqref{extra} is satisfied, assumption \eqref{eq:sigma_0_Korollar} is a necessary condition for bifurcation. \\
($\beta$) Assumption \eqref{extra} in Corollary \ref{Fortsetzung_nichttrivial_Korollar} guarantees that the numerator and the denominator of the right-hand side of \eqref{eq:sigma_0_Korollar} do not vanish simultaneously. In the case where the denominator vanishes, Equation \eqref{eq:sigma_0_Korollar} is to be read as $\cos(k_1\sigma_0)=0$. 
In the interval $[0,\tfrac{\pi}{k_1})$ equation \eqref{eq:sigma_0_Korollar} has a unique solution $\sigma_0\in [0,\tfrac{\pi}{k_1})$. All solutions of \eqref{eq:sigma_0_Korollar} in $[0,2\pi)$ are then given by 
$\sigma_0+j\tfrac{\pi}{k_1}$ for $j=0,\ldots,2k_1-1$. This can result in up to $2 k_1$ bifurcation points. Smaller periodicities of $u_0$ may reduce the actual number of different bifurcation points. E.g., if $k_1\geq 2$ and if $u_0$ has smallest period $\tfrac{2\pi}{k_1}$ then only two bifurcation points exist. \\
($\gamma$) Let $j\in \N$ not be a divisor of $k_1$ and $u_0$ be $\tfrac{2\pi}{j}$-periodic. Then assumption \eqref{extra} is not satisfied since $\phi_0^*$ inherits the periodicity of $u_0$. We will say more about this case in the Appendix. \\
($\delta$) The non-trivial solutions $u_0$ of \eqref{TWE} for $f_1=0$ and $\omega=0$ constructed in \cite{Gaertner_et_al},\cite{Mandel} are even around $s=0$. In this case, \eqref{further_cond} is not an additional assumption because it coincides with assumption \eqref{eq:transv}. The reason is that
$\phi_0^*$ (spanning $\ker L_{u_0}^*$) inherits the parity of $u_0'$ (spanning $\ker L_{u_0}$) which implies $\int_0^{2\pi} \bigl(u_0' \overline{u_0}+2 u_0\overline{u_0'}\bigr) u_0' \overline{\phi_0^*} \, ds=0$, 
cf. Proposition \ref{parity}. Also, the value of $\sigma_0$ in Corollary~\ref{Fortsetzung_nichttrivial_Korollar} is determined by the simpler expression 
$$
\tan(k_1\sigma_0) = -\frac{\int_0^{2\pi} \sin(k_1 s)\RT \phi_0^*(s)\,ds}{\int_0^{2\pi}\sin(k_1s)\IT \phi_0^*(s)\,ds}.
$$
It is an open problem if \eqref{TWE} admits solutions for $f_1=0$ and $\omega=0$ which (up to a shift) are not even around $s=0$.\\
($\epsilon$) Note that in property (b) we exclude that $u_0^+=u_{\sigma_0}$ but we do not exclude that $u_0^+$ coincides with a shift of $u_0$ different from $u_{\sigma_0}$. 
\end{Remark}

\subsection{Two-sided continuations} \label{sec:two_sided}

Here we explain how we can use the results of Theorem~\ref{Hauptresultat} and Theorem~\ref{Fortsetzung_nichttrivial}, Corollary~\ref{Fortsetzung_nichttrivial_Korollar} for the continua $\mathcal{C}^+$ and $\mathcal{C}^-$ in order to obtain two-sided continua w.r.t. the parameter component $f_1$. 

\medskip

As a first trivial observation we can construct a two-sided continuum in the following way both for the setting of Theorem~\ref{Hauptresultat} and Theorem~\ref{Fortsetzung_nichttrivial}: let $\mathcal{C}\subset \R\times H^2_{\text{per}}(0,2\pi)$ be the maximal continuum of solutions $(f_1,u)$ of \eqref{TWE} with $(0,u_0)\in \mathcal{C}$. Then $\mathcal{C}$ contains both $\mathcal{C}^+$ and $\mathcal{C^-}$.

\medskip

Next we assume that the generalized forcing term $f(s)=f_0+f_1 e(s)$ satisfies the symmetry condition that $e\bigl(s+\frac{\pi}{k_1}\bigr)=-e(s)$ for some $k_1\in \N$. This symmetry condition is motivated by \eqref{TME} where $e(s)=\mathrm{e}^{\mathrm{i}k_1 s}$. If we denote by $R$ the reflection operator which acts on solution pairs and is given by 
$$
R: (f_1,u) \mapsto \bigl(-f_1, u\bigl(\cdot+\tfrac{\pi}{k_1}\bigr)\bigr)
$$
then, again both for the setting of Theorem~\ref{Hauptresultat} and Theorem~\ref{Fortsetzung_nichttrivial}, the continuum $\mathcal{C}$ has the following property:
\begin{equation*}
(f_1,u)\in \mathcal{C} \Leftrightarrow  R(f_1,u)\in\mathcal{C}.  
\end{equation*}
This shows that globally the solution sets for positive and negative $f_1$ only differ by a phase shift. The following global structure result is a consequence of this symmetry. 

\begin{Proposition} \label{maximal_continua}
Let $d \in \R\setminus\{0\}$, $\zeta,\omega,f_0\in\R$ and $e\in H^2(0,2\pi)$ be such that $e\bigl(s+\frac{\pi}{k_1}\bigr)=-e(s)$ for some $k_1 \in \N$. Let furthermore $u_0$ be a solution of \eqref{TWE} for $f_1=0$. Then the maximal continua $\mathcal{C}^+$, $\mathcal{C}^-$ and $\mathcal{C}$ containing $(0,u_0)$ satisfy
$
\mathcal{C^-}=R(\mathcal{C}^+)
$
and
$
\mathcal{C}\supset\mathcal{C}^+ \cup \mathcal{C}^-.
$ 
\end{Proposition}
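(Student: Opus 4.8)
The plan is to exploit that the reflection $R$ is a homeomorphism of $\R\times H^2_{\text{per}}(0,2\pi)$ which maps the solution set of \eqref{TWE} bijectively onto itself and interchanges the two closed half-spaces $[0,\infty)\times H^2_{\text{per}}(0,2\pi)$ and $(-\infty,0]\times H^2_{\text{per}}(0,2\pi)$. First I would verify that $R$ sends solutions to solutions: if $u$ solves \eqref{TWE} with forcing $f_0+f_1 e$ and $v\coloneqq u(\cdot+\tfrac{\pi}{k_1})$, then inserting $v$ into \eqref{TWE} with parameter $-f_1$, evaluating the equation for $u$ at the shifted argument $s+\tfrac{\pi}{k_1}$, and using $e(s+\tfrac{\pi}{k_1})=-e(s)$ shows that $v$ solves \eqref{TWE} with forcing $f_0-f_1 e$. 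The same symmetry gives $e(s-\tfrac{\pi}{k_1})=-e(s)$, so $R^{-1}\colon(f_1,u)\mapsto(-f_1,u(\cdot-\tfrac{\pi}{k_1}))$ also preserves solutions. Hence, denoting by $\mathcal{M}\subset\R\times H^2_{\text{per}}(0,2\pi)$ the (closed) solution set of \eqref{TWE}, $R$ is a bijection $\mathcal{M}\to\mathcal{M}$. Since $R$ is moreover a bounded linear isomorphism of the ambient space (a sign change in the first component together with a translation in the second), it is a homeomorphism, and it therefore restricts to a homeomorphism $\mathcal{M}^+\to\mathcal{M}^-$, where $\mathcal{M}^+\coloneqq\mathcal{M}\cap([0,\infty)\times H^2_{\text{per}}(0,2\pi))$ and $\mathcal{M}^-\coloneqq\mathcal{M}\cap((-\infty,0]\times H^2_{\text{per}}(0,2\pi))$.

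Next I would use that a homeomorphism maps connected components onto connected components. With the footnote convention, each maximal continuum is the connected component of its base point in the pertinent closed solution set; thus $\mathcal{C}^+$, $\mathcal{C}^-$ and $\mathcal{C}$ are the components of $(0,u_0)$ in $\mathcal{M}^+$, $\mathcal{M}^-$ and $\mathcal{M}$, respectively. Consequently $R(\mathcal{C}^+)$ is precisely the connected component of $R(0,u_0)=(0,u_0(\cdot+\tfrac{\pi}{k_1}))$ in $\mathcal{M}^-$. The only genuinely nontrivial point is that this base point is a shift of $u_0$ rather than $u_0$ itself, so I must show that $(0,u_0)$ and $(0,u_0(\cdot+\tfrac{\pi}{k_1}))$ lie in one and the same component of $\mathcal{M}^-$.

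To bridge this gap I would invoke the shift orbit at $f_1=0$. Because \eqref{TWE} is autonomous for $f_1=0$, every translate $u_\sigma\coloneqq u_0(\cdot-\sigma)$ is again a solution for $f_1=0$, and the map $\sigma\mapsto(0,u_\sigma)$ is continuous from $\R$ into $\{0\}\times H^2_{\text{per}}(0,2\pi)$ and $2\pi$-periodic (by strong continuity of translation on $H^2_{\text{per}}(0,2\pi)$). Its image is therefore a continuum contained in $\{0\}\times H^2_{\text{per}}(0,2\pi)\subset\mathcal{M}^-$; it contains $(0,u_0)$ at $\sigma=0$ and $(0,u_0(\cdot+\tfrac{\pi}{k_1}))$ at $\sigma=-\tfrac{\pi}{k_1}$. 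Hence these two points share the component $\mathcal{C}^-$ of $\mathcal{M}^-$, so the component of $(0,u_0(\cdot+\tfrac{\pi}{k_1}))$ equals $\mathcal{C}^-$ and we conclude $R(\mathcal{C}^+)=\mathcal{C}^-$. For constant $u_0$ this step is vacuous, since then $u_0(\cdot+\tfrac{\pi}{k_1})=u_0$.

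The inclusion $\mathcal{C}\supset\mathcal{C}^+\cup\mathcal{C}^-$ is then immediate: connectedness is intrinsic, so $\mathcal{C}^+\subset\mathcal{M}^+\subset\mathcal{M}$ is a connected subset of $\mathcal{M}$ containing $(0,u_0)$ and therefore lies in its component $\mathcal{C}$, and likewise $\mathcal{C}^-\subset\mathcal{C}$. I expect the main obstacle to be not analytic but organizational — keeping track of the shift that $R$ introduces at the base point and certifying, via the shift-orbit continuum at $f_1=0$, that passing to $(0,u_0(\cdot+\tfrac{\pi}{k_1}))$ does not leave the component $\mathcal{C}^-$.
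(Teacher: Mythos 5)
Your proof is correct, and it reaches the conclusion by a slightly different mechanism than the paper. Both arguments hinge on the same two ingredients — that $R$ maps the solution set of \eqref{TWE} homeomorphically onto itself while swapping the half-spaces $f_1\geq 0$ and $f_1\leq 0$, and that the shift orbit $\{(0,u_\sigma):\sigma\in\R\}$ is a continuum at $f_1=0$ gluing $(0,u_0)$ to $(0,u_0(\cdot+\tfrac{\pi}{k_1}))$. The difference lies in how equality $R(\mathcal{C}^+)=\mathcal{C}^-$ is certified: the paper first observes that $R(\mathcal{C}^+)$ is a continuum in the negative half-space containing the whole shift orbit (hence $(0,u_0)$ itself), deduces $R(\mathcal{C}^+)\subset\mathcal{C}^-$ from maximality, and then rules out strict inclusion by applying $R^{-1}$ and contradicting the maximality of $\mathcal{C}^+$; you instead identify each maximal continuum with the connected component of the base point in the closed solution set and use that homeomorphisms carry components onto components, which turns the two-sided inclusion argument into a single structural statement. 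Your route buys a cleaner conceptual picture (no contradiction argument, and the role of the base-point shift is isolated explicitly), at the modest cost of having to justify that the solution set is closed and that ``maximal continuum'' and ``connected component'' coincide — both routine, and you flag them. The paper's version avoids that identification entirely and works directly with the defining maximality property. Either way the inclusion $\mathcal{C}\supset\mathcal{C}^+\cup\mathcal{C}^-$ is immediate, as you and the paper both note.
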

\begin{proof}
It is obvious that $\mathcal{C}\supset\mathcal{C}^+ \cup \mathcal{C}^-$. Now we prove that $\mathcal{C^-}=R(\mathcal{C}^+)$. Clearly, $\mathcal{C}^+$ and $R(\mathcal{C}^+)$ contain all shifts $\{(0,u_{\sigma}):\sigma\in\R\}$. Since additionally $R(\mathcal{C}^+) \subset (-\infty,0]\times H^2_{\text{per}}(0,2\pi)$ is connected we find that $R(\mathcal{C}^+) \subset \mathcal{C}^-$. If we assume that $R(\mathcal{C}^+) \subsetneq \mathcal{C}^-$ then we obtain $\mathcal{C}^+ \subsetneq R^{-1}(\mathcal{C}^-)$, which contradicts the maximality of $\mathcal{C}^+$.  
\end{proof}

\noindent As another consequence, we have that either $\operatorname{pr}_1(\mathcal{C})=(-\infty,\infty)$ or $\operatorname{pr}_1(\mathcal{C})$ is bounded from above and below. In the latter case, we call $\mathcal{C}$ a loop.

\medskip

Our final result builds upon Theorem~\ref{Hauptresultat} and the resulting two-sided continuation of a trivial solution $u_0$. It describes the shape of the $L^2$-projection of the continuum $\mathcal{C}$ locally near $(0,u_0)$. In particular, local convexity or concavity can be read from this result. In Section~\ref{sec:numerical} we will put this result into perspective with numerical simulations of the $f_1$-continuation of trivial solutions.

\begin{Theorem} \label{second_derivative} Assume that the assumptions of Theorem~\ref{Hauptresultat} are satisfied and that additionally $e(s)=\eu^{\iu k_1 s}$ is fixed for a $k_1 \in\N$. Then we can determine the local shape of the curve $f_1\mapsto \|u(f_1)\|_2^2$ as follows:
$$
\frac{d}{df_1} \|u(f_1)\|_2^2\mid_{f_1=0} = 0, \qquad \frac{d^2}{df_1^2} \|u(f_1)\|_2^2\mid_{f_1=0}= 4\pi(\RT(u_0 \overline \epsilon)+|\alpha|^2+|\beta|^2)
$$
with 
\begin{align*}
\alpha &= \frac{-\mathrm{i}(dk_1^2+k_1\omega+\zeta+\mathrm{i}-2|u_0|^2)}{(\zeta+dk_1^2-2|u_0|^2)^2-(\omega k_1+\mathrm{i})^2-|u_0|^4},\\
\beta &= \frac{\mathrm{i} u_0^2}{(\zeta+dk_1^2-2|u_0|^2)^2-(\omega k_1-\mathrm{i})^2-|u_0|^4},\\
x&=\zeta-\mathrm{i}-2|u_0|^2,\\
y& =-u_0^2,\\
z& = 4u_0(|\alpha|^2+|\beta|^2)+4\overline{u_0}\alpha\beta,\\
\epsilon &= \frac{-\overline z y+z\overline x}{|x|^2-|y|^2}.
\end{align*}
\end{Theorem}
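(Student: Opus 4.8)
The plan is to exploit the local smooth parametrization $f_1\mapsto u(f_1)$ of $\mathcal{C}^+$ near $(0,u_0)$ granted by Theorem~\ref{Hauptresultat}(i), and to obtain the two derivatives of $\|u(f_1)\|_2^2$ from a few Fourier coefficients of the first and second $f_1$-derivatives of $u$. Write $u_1:=\partial_{f_1}u|_{f_1=0}$ and $u_2:=\partial_{f_1}^2u|_{f_1=0}$. Since $u_0$ is a constant non-degenerate solution we have $u_0'=0$, hence $\ker L_{u_0}=\spann\{u_0'\}=\{0\}$, so by Remark~\ref{fredholm_etc} the operator $L_{u_0}$ is invertible; this is what makes both variations uniquely solvable.

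First I would differentiate \eqref{TWE} once in $f_1$ at $f_1=0$. Writing the nonlinearity as $|u|^2u=u^2\overline u$, the linear contributions reassemble exactly into $L_{u_0}$, so the first variation solves $L_{u_0}u_1=-\iu\eu^{\iu k_1 s}$. As $L_{u_0}$ has constant coefficients and couples only the Fourier modes $m$ and $-m$, while the forcing sits in mode $k_1$ alone, $u_1$ can only have nonzero coefficients in modes $\pm k_1$; the $2\times2$ system for those two coefficients has determinant equal to the non-degeneracy quantity \eqref{non_degenerate} at $m=k_1$ (hence nonzero), and Cramer's rule yields $u_1=\alpha\eu^{\iu k_1 s}+\beta\eu^{-\iu k_1 s}$ with $\alpha,\beta$ as stated. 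Since $\tfrac{d}{df_1}\|u\|_2^2=2\langle\partial_{f_1}u,u\rangle_2$ equals $2\RT\int_0^{2\pi}u_1\overline{u_0}\,ds$ at $f_1=0$, and $u_0$ is constant while $u_1$ has zero mean, the first derivative vanishes.

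For the second derivative I would differentiate \eqref{TWE} twice. The $f_1$-independent forcing drops out, the $u_2$-terms again form $L_{u_0}$, and the twice-differentiated nonlinearity produces the inhomogeneity $F:=2\overline{u_0}u_1^2+4u_0|u_1|^2$, so $L_{u_0}u_2=F$. Differentiating the norm twice gives $\tfrac{d^2}{df_1^2}\|u\|_2^2|_{f_1=0}=2\RT\int_0^{2\pi}\bigl(u_2\overline{u_0}+|u_1|^2\bigr)\,ds$. By orthogonality of $\eu^{\pm\iu k_1 s}$ one has $2\int_0^{2\pi}|u_1|^2\,ds=4\pi(|\alpha|^2+|\beta|^2)$, which accounts for the $|\alpha|^2+|\beta|^2$ part of the claim.

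The remaining term $\int_0^{2\pi}u_2\overline{u_0}\,ds$ is where the one conceptual point lies: since $u_0$ is constant, this integral equals $2\pi\,\overline{u_0}\,(u_2)_0$, so only the zeroth Fourier coefficient of $u_2$ is needed and the full equation $L_{u_0}u_2=F$ need not be inverted. Projecting $L_{u_0}u_2=F$ onto mode $0$ decouples into the single complex relation $x\,(u_2)_0+y\,\overline{(u_2)_0}=z$, where $x=\zeta-\iu-2|u_0|^2$ and $y=-u_0^2$ are the mode-$0$ diagonal and off-diagonal entries of $L_{u_0}$, and $z=F_0=4u_0(|\alpha|^2+|\beta|^2)+4\overline{u_0}\alpha\beta$ is obtained by expanding $u_1^2$ and $|u_1|^2$. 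Adjoining the conjugate relation gives a $2\times2$ system with determinant $|x|^2-|y|^2=\zeta^2-4|u_0|^2\zeta+1+3|u_0|^4$, nonzero by non-degeneracy (Lemma~\ref{charactrization_nondeg}), and Cramer's rule gives $(u_2)_0=\epsilon$. Hence $2\RT\int_0^{2\pi}u_2\overline{u_0}\,ds=4\pi\RT(u_0\overline\epsilon)$, and adding the two pieces produces the stated value. The main effort is the careful bookkeeping in extracting $F_0=z$ from $u_1^2$ and $|u_1|^2$; the rest is the reduction to a single Fourier mode together with two applications of Cramer's rule whose solvability is exactly non-degeneracy.
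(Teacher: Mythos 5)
Your proposal is correct and follows essentially the same route as the paper's proof: differentiate \eqref{TWE} once and twice in $f_1$, solve the first variation explicitly in the modes $\pm k_1$ via a $2\times 2$ system whose determinant is the non-degeneracy quantity, and observe that for the second derivative only the zeroth Fourier coefficient of the second variation is needed, which satisfies $x\epsilon+y\overline{\epsilon}=z$ with determinant $|x|^2-|y|^2\neq 0$ again by non-degeneracy. The bookkeeping of $F_0=z$ and the orthogonality computation $2\int_0^{2\pi}|u_1|^2\,ds=4\pi(|\alpha|^2+|\beta|^2)$ match the paper exactly.
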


\section{Numerical Illustration of the Analytical Results} \label{sec:numerical}

In this section we restrict ourselves to equation \eqref{TME}, i.e., we fix $e(s)=\eu^{\iu k_1 s}$.
For this choice, we know from Section~\ref{sec:two_sided} that the one-sided continua
$\mathcal{C}^+$ and $\mathcal{C}^-$ are related by $\mathcal{C}^-= R(\mathcal{C}^+)$. The following numerical examples were computed with $ d=-0.1 $, $f_0=2$, $k_1=1$, and $\omega=1$.

\begin{figure}[h]
 \includegraphics[width=0.9\textwidth]{./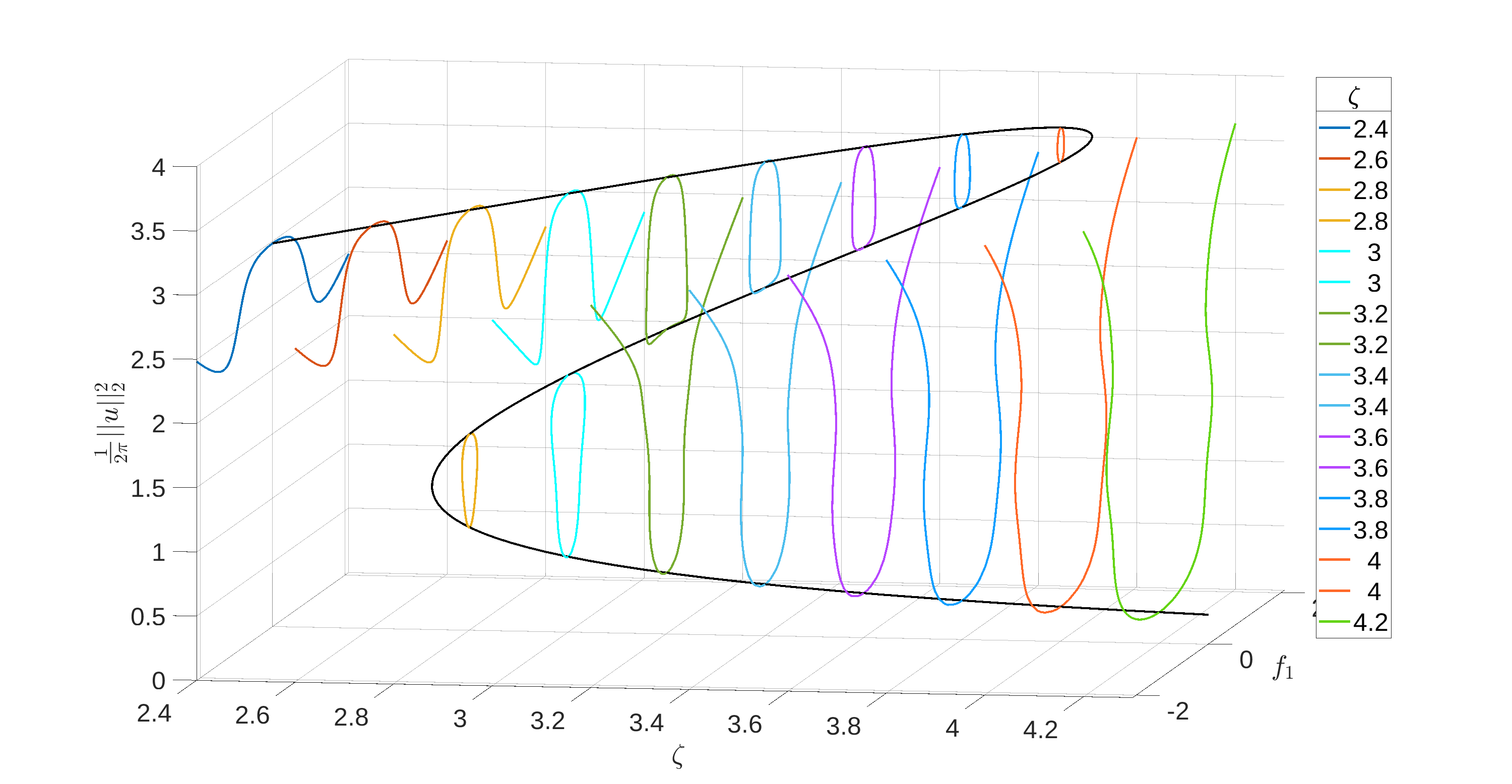} 
 \caption{
 Continua of solutions $(f_1,u)$ of \eqref{TME} for selected values of the detuning $\zeta$.
 The other parameters were set to $ d=-0.1 $, $f_0=2$, $k_1=1$, and $\omega=1$.} 
  \label{Continua.few.lines}
\end{figure}

Figure~\ref{Continua.few.lines} illustrates some of the two-sided continua $\mathcal{C}^+ \cup \mathcal{C}^-$ 
obtained by continuation of trivial solutions for different values of the detuning $\zeta$.
Every point on the black and colored curves corresponds to a solution $u$ of \eqref{TME}, but for the sake of visualization in a three-dimensional image every solution has to be represented by a single number. 
In Figure~\ref{Continua.few.lines}, the quantity $\tfrac{1}{2\pi}\|u\|_2^2$ was used for this purpose. 

The black curve corresponds to spatially constant solutions of \eqref{TME} obtained for $ f_1=0$
and $ \zeta\in[2.4,4.3]$.
The colored curves represent (parts of) the continua associated to these solutions. Every trivial solution (possibly except the ones at turning points) has an associated continuum, but for the sake of visualization these continua are only shown for selected values of $\zeta$, namely $\zeta\in\{2.4, 2.6, \ldots, 4.0, 4.2\}$.
The picture is symmetric with symmetry plane $\{(\zeta,0,z) \colon \zeta\in\R, z\in\R\}$.
This is an immediate consequence of the relation $\mathcal{C}^-= R(\mathcal{C}^+)$ and the fact that shifting $u$ does not change $\|u\|_2.$

For $ \zeta \in\{2.4, \, 2.6, \, 4.2\} $ there is only one trivial solution, and for these three values Figure~\ref{Continua.few.lines} shows a part of the associated two-sided continuum $\mathcal{C}^+ \cup \mathcal{C}^-$. 
Although $f_1$ was restricted to $[-2,2]$, each of these continua appears to be global in $f_1$, i.e.~we conjecture that the continua continue for \emph{all} values $f_1\in (-\infty,\infty)$.
This corresponds to case (a) in Theorem~\ref{Hauptresultat}.

For $ \zeta \in\{2.8, \, 3.0, \ldots, \, 4.0\} $, however, there are three trivial solutions. For these values of $\zeta$, there is one colored loop which connects two solutions, and one continuum which seems to continue for all values of $f_1$. 
The former corresponds to case (b) in Theorem~\ref{Hauptresultat}, the latter to case (a).
For $ \zeta \in\{2.8, \, 3.0\} $ the ``lower'' two solutions are connected, whereas for 
$ \zeta \in\{3.2, \, \ldots, \, 4.0\} $ it is the ``upper'' two solutions which are connected. 
Hence, there seems to be a threshold value $\zeta^*$ that determines which of the two scenarios occurs.
Computations with more values of $\zeta$ show that this threshold value $\zeta^*$ lies between 3.1344 and 3.1359;
cf.~Figure~\ref{Switching.point}. The union of the continua for $\zeta$-values close to the threshold $\zeta^*$ (i.e.\ for $ \zeta=3.1344 $ and $ \zeta=3.1359 $) is nearly the same, and the two continua nearly meet in two points.\footnote{As mentioned earlier, only the $L^2$-norm of solutions can be visualized in 
Figure~\ref{Continua.few.lines}, \ref{Switching.point} and all other plots. The fact that two functions have (nearly) the same norm does, of course, not imply that the functions themselves are (nearly) identical. 
It can be checked, however, that the two solutions which correspond to the two points where the distance between the two continua is minimal are indeed very similar (data not shown).}
The mathematical mechanisms which cause this qualitative change are not yet understood. 
One could expect that the connectivity threshold coincides with the value where the square of the $L^2$-norm of the solutions as a function of $f_1$ changes from being locally convex to locally concave.
However, Theorem~\ref{second_derivative} shows that this is \emph{not} true.

\begin{figure}[h]
 \includegraphics[width=0.9\textwidth]{./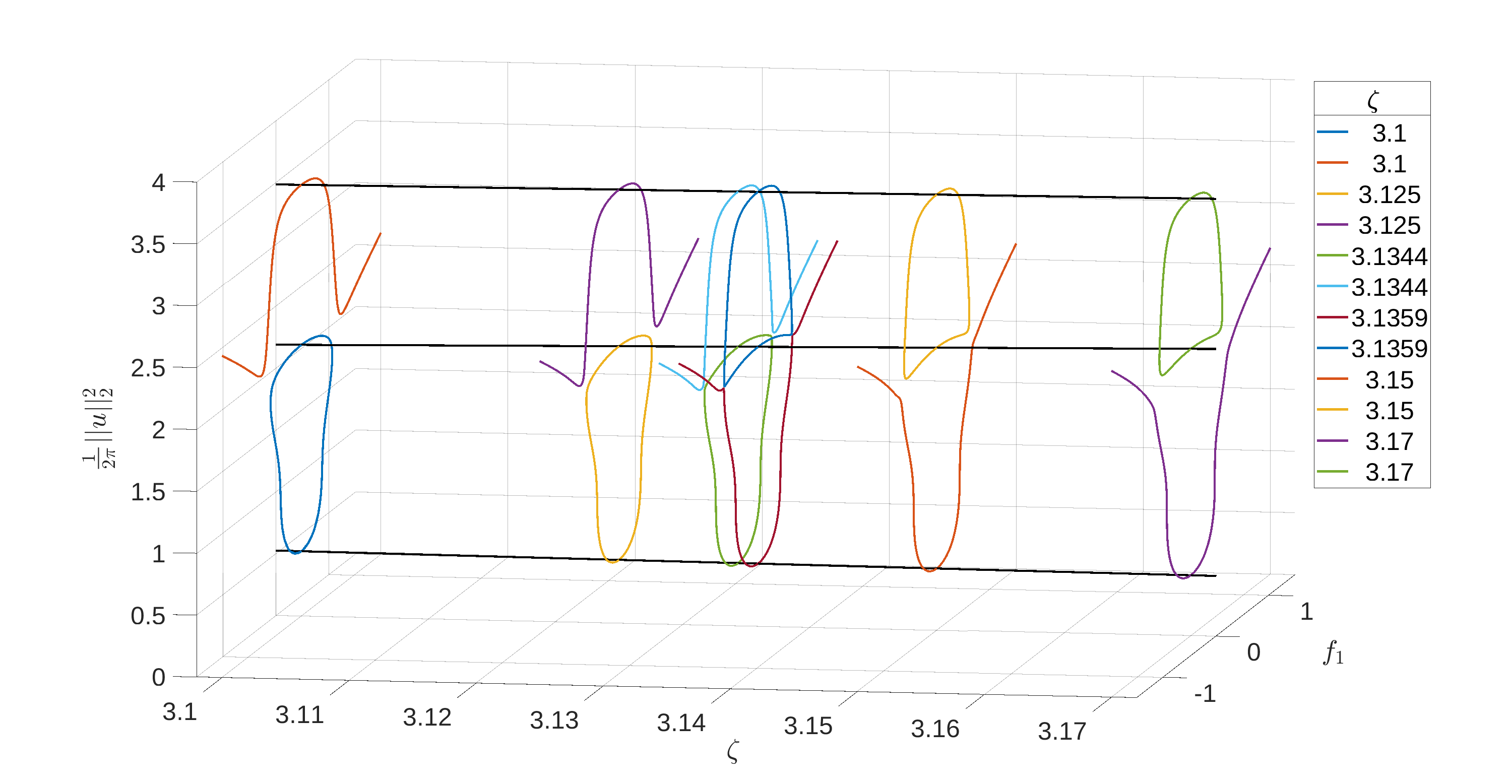} 
 \caption{
 Same situation as in Figure~\ref{Continua.few.lines}. Zoom to the region close to the threshold where the continua change connectivity.} 
  \label{Switching.point}
\end{figure}

Figure~\ref{Fig02} illustrates the same application, but depicted from a different angle and with more values of $\zeta$.
Repeating the simulation with $d=0.1$ (anomalous dispersion) instead of $d=-0.1$ (normal dispersion) did not change the picture essentially.

\begin{figure}[h]
\includegraphics[width=0.9\textwidth]{./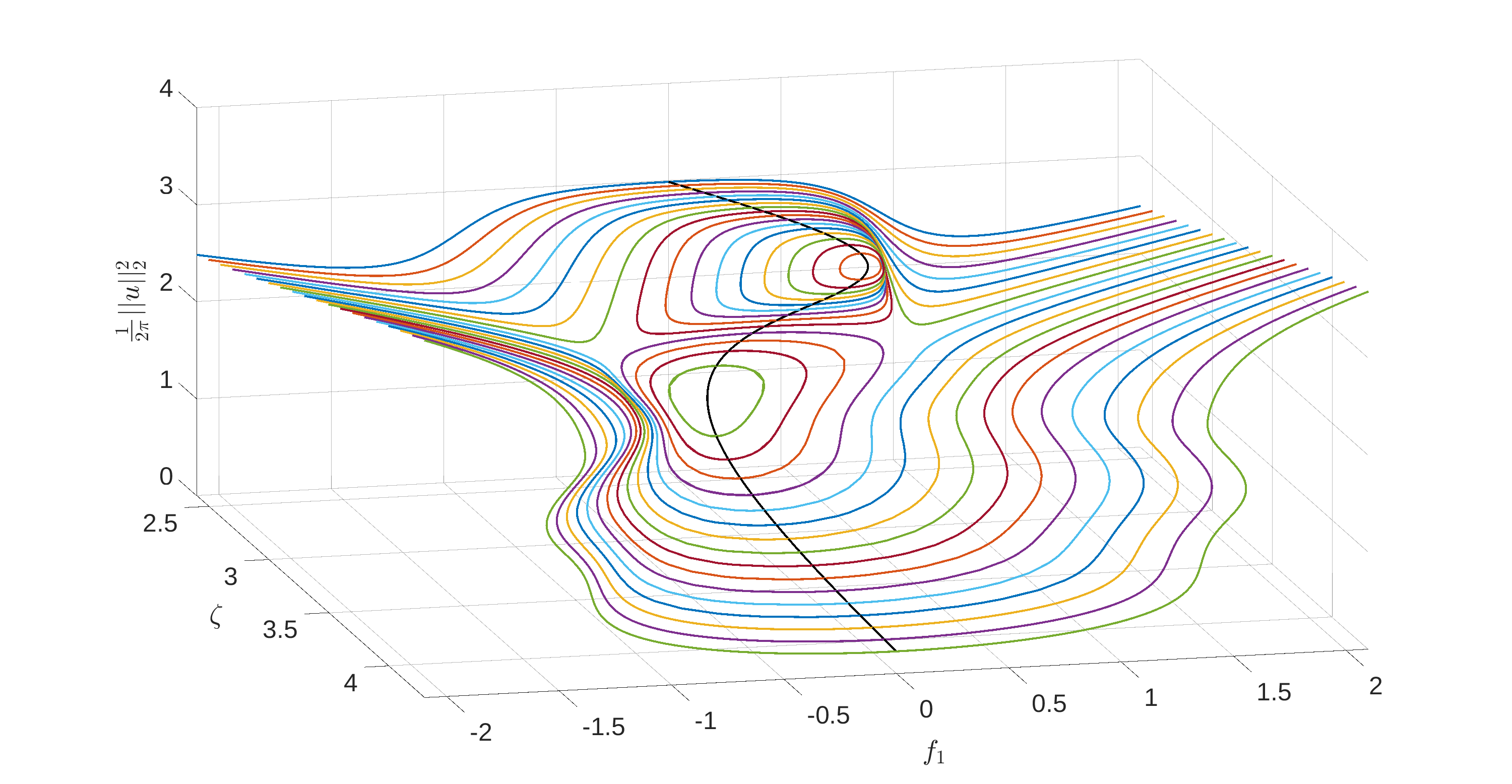} 
 \caption{Same situation as in Figure~\ref{Continua.few.lines}, but depicted from a different angle and with more values of $\zeta$.
 }
  \label{Fig02}
\end{figure}

Figures~\ref{Continua.few.lines}, \ref{Switching.point}, and \ref{Fig02} were generated by 
discretizing \eqref{TME} with central finite differences ($1000$ grid points),
and by applying the classical continuation method as described in, e.g., \cite{allgower-georg-1990}, 
to the discretized system.

The result of Theorem~\ref{second_derivative} can be interpreted as follows: each point on the trivial curve is a local extremum of the squared $L^2$-norm of the solution curve $f_1\mapsto u(f_1)$. The type of local extremum is described by the sign of the second derivative $\frac{d^2}{df_1^2} \|u(f_1)\|_2^2\mid_{f_1=0}$. We visualize this by an example for $d=-0.1$, $f_0=2$, $k_1=1$, $\omega=1$. By using the parameterization $t\mapsto \zeta(t), t\mapsto u_0(t)$ for $t\in (-1,1)$ from \eqref{parametrization} we can illustrate the sign-changes of the second derivative. In Figure~\ref{Fig_dummy} we are plotting the curve $t \mapsto (\zeta(t), |u_0(t)|^2)$ and indicate at each point on the curve the sign of $4\pi(\RT(u_0(t) \bar \epsilon(t))+|\alpha(t)|^2+|\beta(t)|^2)$, where $\epsilon(t), \alpha(t), \beta(t)$ are taken from Theorem~\ref{second_derivative} with $\zeta=\zeta(t)$ and $u_0=u_0(t)$. In this particular example, as we run through the curve of trivial solutions from left to right a first sign-change of $\frac{d^2}{df_1^2} \|u(f_1)\|_2^2\mid_{f_1=0}$ occurs at $\zeta\approx 0.8533$.

\begin{wrapfigure}[14]{r}{0.55\textwidth}
\centering
\includegraphics[width=0.5\textwidth]{./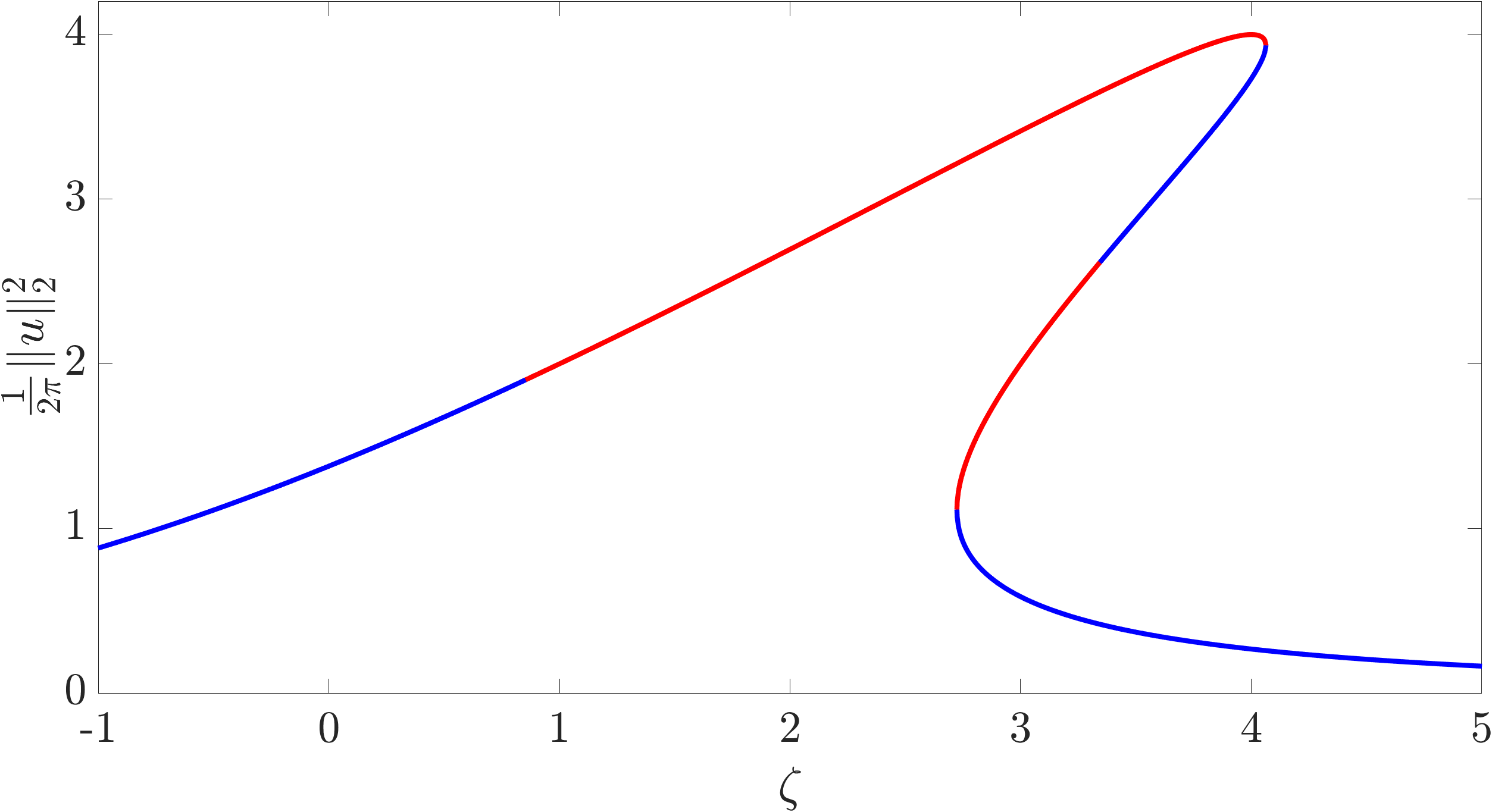} 
 \caption{Sign of the second derivative of $f_1\mapsto \|u(f_1)\|_2^2$ at $f_1=0$; blue$=$positive, red$=$negative.}
  \label{Fig_dummy}
\end{wrapfigure}
 A second sign-change (in fact a singularity changing from $-\infty$ to $+\infty)$ occurs at the first turning point. Then, the next sign-change occurs on the part of the branch between the two turning points at $\zeta\approx 3.34$. Finally, the second turning point generates the last sign-change from $-\infty$ to $+\infty$. Clearly, the changes in the nature of the local extremum of $f_1\mapsto \|u(f_1)\|_2^2$ at $f_1=0$ do not correspond to the topology changes of the solution continua which occur near the threshold value $\zeta^*\in (3.1344,3.1359)$. 

%

Next, we keep the parameters $d=-0.1 $, $f_0=2$, $k_1=1$ but choose $\omega=0$ instead of $\omega=1$. Recall that for $\omega=0$ there is a plethora of non-trivial solutions of \eqref{TME} for $f_1=0$, cf.~\cite{Gaertner_et_al},\cite{Mandel}. In fact, this time we find additional primary and secondary bifurcation branches for $f_1=0$ which are illustrated in Figure~\ref{Continua_omega_zero} in grey and brown, respectively. Bifurcation points are shown as grey dots. The bifurcation branches consist of non-trivial solutions. Further, some numerical approximations of the two-sided maximal continua $\mathcal{C}$ obtained by continuation
of trivial or non-trivial solutions for different values of the detuning $\zeta$ are shown. If we start from a constant solution at $f_1=0$, then $\mathcal{C}^\pm$ are described by Theorem~\ref{Hauptresultat}. Likewise, if we start from a non-constant solution at $f_1=0$ which has no smaller period than $2\pi$, then $\mathcal{C}^\pm$ are described by Theorem~\ref{Fortsetzung_nichttrivial}. In both cases, $\mathcal{C}\supset\mathcal{C}^+ \cup \mathcal{C}^-$ by Proposition~\ref{maximal_continua}, but in all examples below we observe in fact equality. If we expect a maximal continuum to contain two or more (non-trivial) different simple closed curves, then we illustrate the latter ones with different colors. Let us look at some particular values of $\zeta$ where different phenomena occur.

\begin{figure}[ht!]
\centering
\includegraphics[width=0.85\textwidth]{./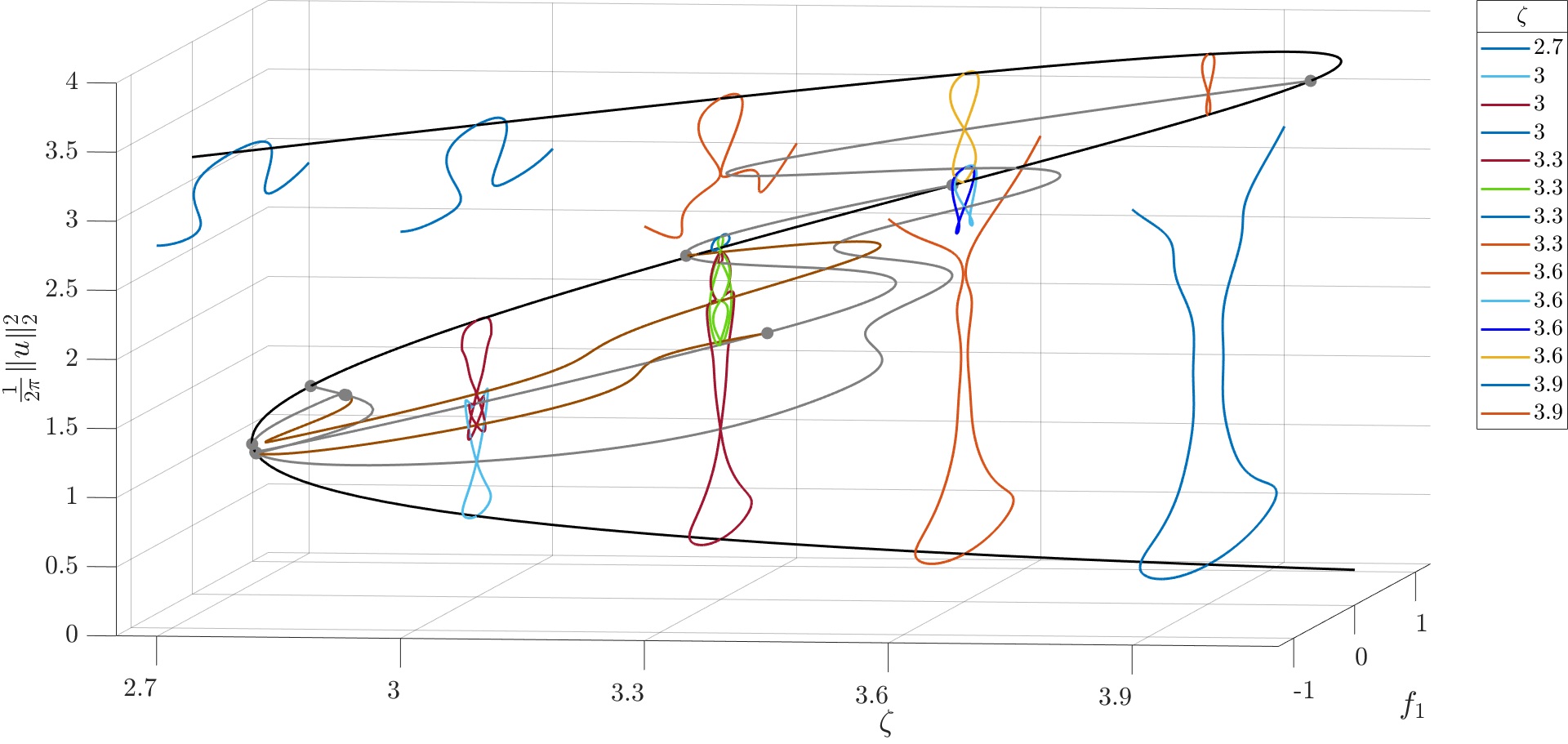} 
 \caption{Continua of solutions $(f_1,u)$ of \eqref{TME} for selected values of the detuning $\zeta$.
 The other parameters were set to $d=-0.1 $, $f_0=2$, $k_1=1$, and $\omega=0$.}
  \label{Continua_omega_zero}
\end{figure}

At $\zeta=2.7$ we see exactly one solution for $f_1=0$. This solution is constant and its continuation appears to be global in $f_1$. For $\zeta=3.9$ and $f_1=0$ we see three constant solutions but also one non-constant solution (up to shifts) which lies on one of the grey bifurcation branches. The continuation of the constant solution with smallest magnitude again appears to be global in $f_1$, while the other three solutions lie on the same eight-shaped maximal continuum which we will denote as \textit{figure eight continuum}. Note that the latter continuum contains all shifts of the non-trivial solution for $f_1=0$. 

The figure eight can be interpreted as an outcome of Theorem~\ref{Hauptresultat} applied to one of the constant solutions on the figure eight. Here, case (b) of the theorem applies. However, the figure eight can also be interpreted as an outcome of Theorem~\ref{Fortsetzung_nichttrivial} applied to the non-constant solution $u_0$ at $f_1=0$. Again, case (b) of the theorem applies. A plot (which we omit) of the non-trivial solution $u_0$ at $f_1=0$ shows that $u_0$ has no smaller period than $2 \pi$. Thus, according to Remark~\ref{Fortsetzung_nichttrivial_Remark}.($\beta$) exactly two shifts of it, which differ by $\pi$, are bifurcation points. To sum up, we observe that the figure eight continuum in fact contains a simple closed figure eight curve which exactly goes through two shifts of $u_0$ (which differ by $\pi$) in the point where the orange lines intersect the grey line of non-trivial solutions. The two shifts cannot be distinguished in the picture, because a shift does not change the $L^2$-norm. 
\begin{figure}[h]
\centering
\includegraphics[width=0.8\textwidth]{./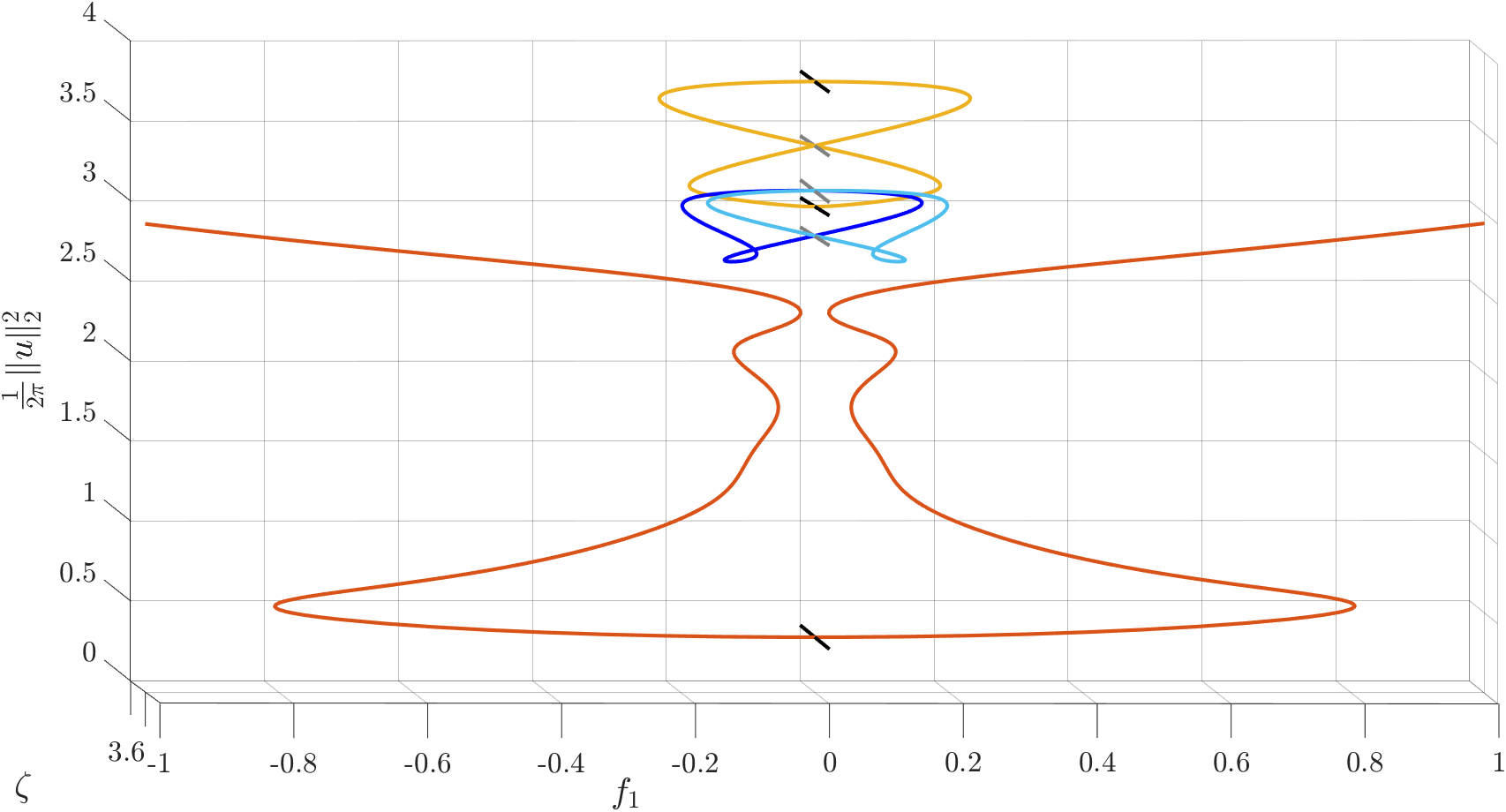} 
 \caption{Zoom at $\zeta=3.6$.}
  \label{Zoom_bei_drei_sechs}
\end{figure}
To illustrate the different continua for $\zeta=3.6$, we provide a zoom in Figure~\ref{Zoom_bei_drei_sechs}. We obtain again an unbounded continuum and a figure eight continuum. However, here we also find a third maximal continuum which cannot be found by simply continuing one of the constant solutions. This continuum consists of the blue and the light blue simple closed curve connected to each other by shifts at $f_1=0$. The parts of the blue and the light blue curve in the region $f_1\geq 0$ are described by case (b) of Theorem~\ref{Fortsetzung_nichttrivial} applied to one of the non-trivial solutions $u_0$ at $f_1=0$ on it. They have no smaller period than $2\pi$ (plots not shown). Going from the blue part to the light blue part is a consequence of reflection. At $f_1=0$ the blue curve intersects the grey line at exactly two points. The light blue curve does the same, but at $\pi$-shifts of these points.

For $\zeta=3.3$ the situation is more complicated. In this case, we see three constant solutions for $f_1=0$ but also seven non-constant ones. The continuation of the upper constant solution (orange) appears to be unbounded. We observe that the blue, the red and the green simple closed curve in fact form a single maximal continuum, since all curves are connected by shifts of non-constant solutions at $f_1=0$. Viewed from top to bottom, we find (plots not shown) that the first, the third and the last one are $\pi$-periodic while the remaining ones have smallest period $2\pi$. All together, we observe that exactly two shifts of every non-constant solution at $f_1=0$ are bifurcation points. For the solutions which have no smaller period than $2\pi$ this is a direct consequence of Theorem~\ref{Fortsetzung_nichttrivial}, cf.~Remark~\ref{Fortsetzung_nichttrivial_Remark}.($\beta$). However, at the three remaining $\pi$-periodic solutions at $f_1=0$ Theorem~\ref{Fortsetzung_nichttrivial} does not apply, cf.~Remark~\ref{Fortsetzung_nichttrivial_Remark}.($\gamma$). Nevertheless, we observe continuations from these points. Interestingly, these points seem to be characterized by horizontal tangents, at least in this example.

\begin{figure}[ht!]
\centering
\includegraphics[width=0.8\textwidth]{./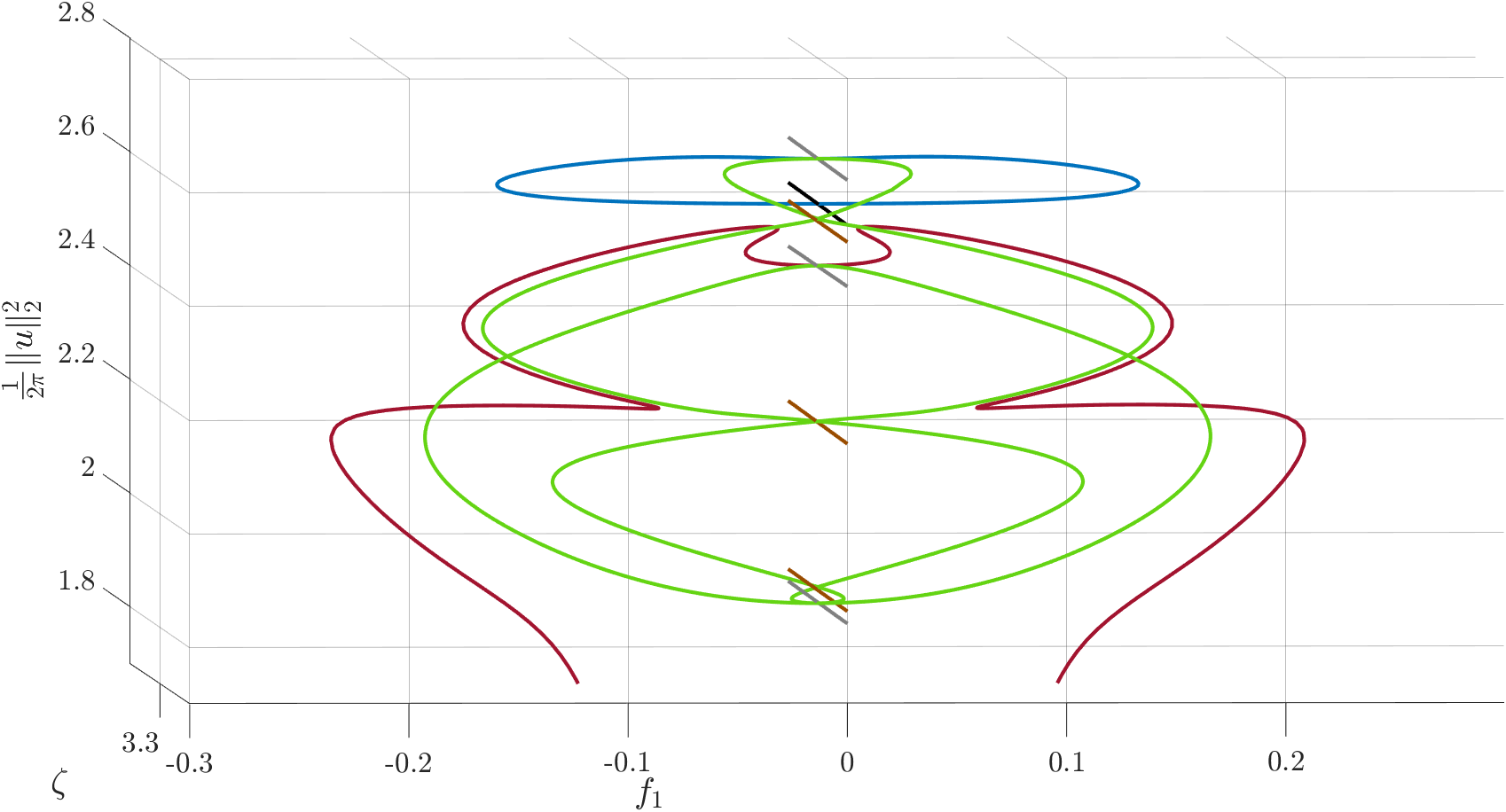} 
 \caption{Zoom at $\zeta=3.3$.}
  \label{Zoom_bei_drei_drei}
\end{figure}

\begin{figure}[ht!]
\centering
 \includegraphics[width=\textwidth]{./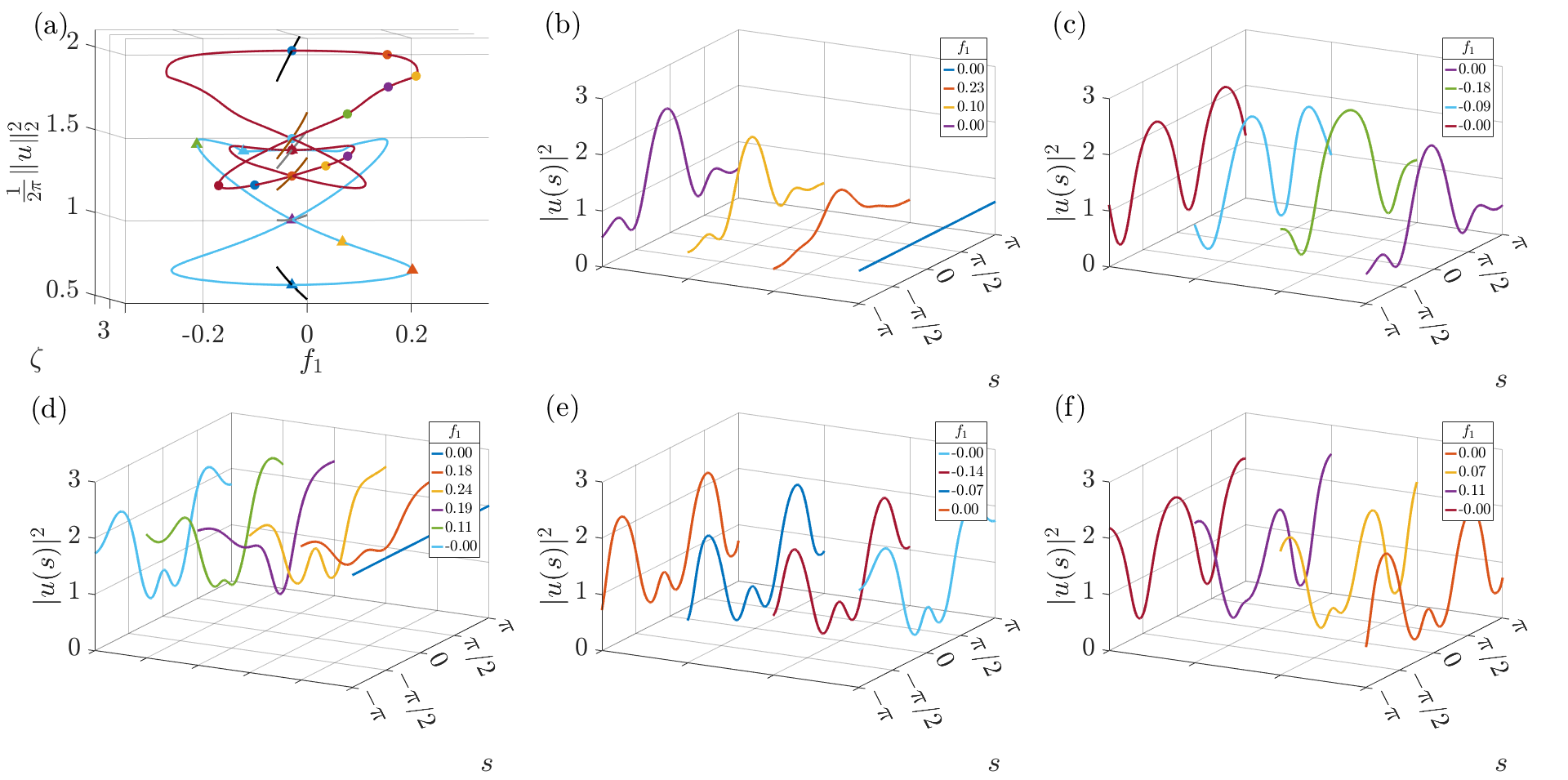} 
 \caption{Zoom at $\zeta=3$ and illustration of selected functions.} 
  \label{Zoom_bei_drei}
\end{figure}

For $\zeta=3$ we see three constant solutions and four non-constant ones at $f_1=0$. Again, the continuation of the upper constant solution is unbounded.  We provide a more general investigation in Figure~\ref{Zoom_bei_drei}, where we also depict several of the continued solutions $u$ of \eqref{TME} for $f_1\neq 0$. Since $u$ is complex-valued, we use the quantity $|u(s)|^2$ for illustration purposes and plot it against $s\in[-\pi,\pi]$. In Figure~\ref{Zoom_bei_drei}(a) we show a bounded continuum consisting of the light blue and the red simple closed curve connected to each other by shifts at $f_1=0$. Starting from the constant solution on the light blue curve and proceeding first into the $f_1>0$ direction, Figure~\ref{Zoom_bei_drei}(b)-(c) show plots of functions corresponding to colored triangles. In Figure~\ref{Zoom_bei_drei}(d)-(f) functions corresponding to colored dots on the red curve are shown, where we start again at the constant solution and initially proceed in the $f_1>0$ direction. We observe that both curves cross the ($\pi$-periodic) non-constant solution with second largest norm, but at two different shifts: the leftmost dark-red curves in (c) and (f) only coincide after a non-zero shift. Continuations from $\pi$-periodic solutions at $f_1=0$ are not covered by Theorem~\ref{Fortsetzung_nichttrivial}. Nevertheless, they are observed in the numerical experiments, again with horizontal tangents. The explanation of these continuations remains open, cf.~the Appendix for further discussion.




\section{Proof of a-priori bounds} \label{sec:a-priori} 

We use the notation $r_+=\max\{0,r\}$ to denote the positive part of any real number $r\in \R$ and also $\mathbf{1}_{d<0}$ to denote (as a function of $d\in \R$) the characteristic function of the interval $(-\infty,0)$. We write $\|\cdot\|_p$ for the standard norm on $L^p(0,2\pi)$ for $p \in[1,\infty]$. A continuous map between two Banach spaces is said to be compact if it maps bounded sets into relatively compact sets. 

\begin{Theorem}\label{a-priori}
Let $d \in \R\setminus\{0\}$, $\zeta,\omega\in\R$ and $f \in H^2(0,2\pi)$. Then for every solution $u \in H^2_{\text{per}}(0,2\pi)$ of \eqref{TWE} the a-priori bounds
\begin{align}
\|u\|_2 & \leq F, \label{l2}\\
\|u'\|_2 & \leq B\|u\|_2^{\frac{1}{4}}\leq B F^{\frac{1}{4}} , \label{l2_prime} \\
\|u\|_\infty & \leq C \label{linfinity}
\end{align}
hold, where 
\begin{align*}
F&= F(f)= \|f\|_2, \\  
B&= B(d,f)=\frac{F^{\frac{11}{4}}}{2|d|}+2\|f'\|_{\infty} F^{\frac{1}{4}}+\sqrt{\|f''\|_2 F^{\frac{1}{2}}+2\|f'\|_{\infty}\bigg(\sqrt{\frac{F}{2\pi}} +1\bigg)}, \\
C&= C(d,f)=\frac{F}{\sqrt{2\pi}}+\sqrt{2\pi} B F^{\frac{1}{4}}.
\end{align*}
For $\zeta\sign(d)\ll -C^2 \mathbf{1}_{d<0}$ these bounds can be improved to  
$$
\|u\|_2 \leq D, \quad \|u\|_\infty \leq \biggl(\frac{F^{\frac{3}{4}}}{\sqrt{2\pi}}+\sqrt{2\pi}B\biggr)D^{\frac{1}{4}},
$$
where 
$$
D=D(d,f,\omega,\zeta)=\biggl(\frac{F^{\frac{3}{2}}+ |\omega|B F^{\frac{3}{4}} +|d|B^2}{(-\zeta \operatorname{sign}(d)-C^2 \mathbf{1}_{d<0})_+}\biggr)^{\frac{2}{3}}.
$$
\end{Theorem}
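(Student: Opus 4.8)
The plan is to establish the three bounds in the stated order, each feeding into the next, and then to extract the improved bounds from a separate energy identity. The governing principle for \eqref{l2}--\eqref{l2_prime} is that the \emph{imaginary} part of a suitable tested equation isolates exactly the quantities we want while automatically discarding the $\zeta$- and $\omega$-contributions (this is why $F$ and $B$ carry no $\zeta,\omega$-dependence). For \eqref{l2} I would multiply \eqref{TWE} by $\overline u$, integrate over $(0,2\pi)$, and take imaginary parts. Using periodicity one checks that $-d\int u''\overline u = d\|u'\|_2^2$, the term $\mathrm{i}\omega\int u'\overline u$ and the nonlinearity $-\int|u|^4$ are real (note $\int u'\overline u\in\mathrm{i}\R$), so only $(\zeta-\mathrm{i})\|u\|_2^2$ and $\mathrm{i}\int f\overline u$ contribute, giving $\|u\|_2^2=\RT\int f\overline u=\langle f,u\rangle_2\le\|f\|_2\|u\|_2$, which is \eqref{l2}.

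For the crucial $H^1$-bound \eqref{l2_prime} I would first differentiate \eqref{TWE}: with $v=u'$ this yields $-dv''+\mathrm{i}\omega v'+(\zeta-\mathrm{i})v-2|u|^2v-u^2\overline v+\mathrm{i}f'=0$. Multiplying by $\overline v$ and taking the imaginary part annihilates the $d\|v'\|_2^2$, the $\mathrm{i}\omega$- and the $-2|u|^2|v|^2$-terms and produces $\|u'\|_2^2=-\IT\int u^2(\overline{u'})^2+\RT\int f'\overline{u'}$. The cubic term cannot be estimated directly (that would require $\|u\|_\infty<1$); instead I integrate it by parts to $-2\int|u|^2|u'|^2-\int|u|^2u\,\overline{u''}$ and substitute $\overline{u''}$ from the complex conjugate of \eqref{TWE}. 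The $\zeta$- and $\omega$-pieces of this substitution are purely real and drop out, leaving the $\zeta,\omega$-free identity $\|u'\|_2^2=\tfrac1d\int|u|^4-\tfrac1d\RT\int f|u|^2u+\RT\int f'\overline{u'}$. I then bound $\int|u|^4\le\|u\|_\infty^2\|u\|_2^2$ and $|\RT\int f|u|^2u|\le\|f\|_2\|u\|_6^3\le F\|u\|_\infty^2\|u\|_2$, and crucially apply the sharp one-dimensional interpolation $\|u\|_\infty^2\le\tfrac1{2\pi}\|u\|_2^2+2\|u\|_2\|u'\|_2$, which is \emph{linear} in $\|u'\|_2$. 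This turns the identity into a quadratic inequality $\|u'\|_2^2\le p\,\|u'\|_2+q$ (the forcing term $\RT\int f'\overline{u'}$ being estimated by Cauchy--Schwarz, either directly to produce $\|f'\|_\infty$ or after one more integration by parts to produce $\|f''\|_2$), which I solve via $\|u'\|_2\le p+\sqrt q$ and then rewrite by collecting powers of $\|u\|_2\le F$.

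The bound \eqref{linfinity} is then immediate from the embedding $\|u\|_\infty\le\tfrac1{\sqrt{2\pi}}\|u\|_2+\sqrt{2\pi}\|u'\|_2$ (bound the mean by $\tfrac1{\sqrt{2\pi}}\|u\|_2$ and the oscillation by $\int_0^{2\pi}|u'|\le\sqrt{2\pi}\|u'\|_2$, using that a zero-mean continuous function vanishes somewhere) combined with \eqref{l2} and \eqref{l2_prime}. For the improved bounds in the regime $\zeta\sign(d)\ll -C^2\mathbf 1_{d<0}$ I would instead use the \emph{real} part of the test against $\overline u$, namely $\zeta\|u\|_2^2=\int|u|^4-d\|u'\|_2^2+\omega\IT\int u'\overline u+\IT\int f\overline u$, and solve for $\|u\|_2$. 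If $d>0$ the term $-d\|u'\|_2^2\le0$ is simply dropped; if $d<0$ the unfavorable $+\int|u|^4$ is controlled by $\|u\|_\infty^2\|u\|_2^2\le C^2\|u\|_2^2$ via \eqref{linfinity}, which is precisely the origin of the $C^2\mathbf 1_{d<0}$ in the threshold. Estimating the remaining terms by \eqref{l2_prime} and $\|u\|_2\le F$ and dividing by $\|u\|_2^{1/2}$ gives $\bigl(-\zeta\sign(d)-C^2\mathbf 1_{d<0}\bigr)_+\|u\|_2^{3/2}\le F^{3/2}+|\omega|BF^{3/4}+|d|B^2$, i.e.\ $\|u\|_2\le D$; feeding $D$ back into the embedding yields the improved $L^\infty$-bound.

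The main obstacle is the $H^1$-estimate \eqref{l2_prime}. The entire difficulty is to convert the intractable cubic term $\IT\int u^2(\overline{u'})^2$, through the integration by parts together with the substitution of the conjugate equation, into an identity for $\|u'\|_2^2$ whose constants are independent of $\zeta,\omega$ and in which the top-order nonlinearity enters only via $\int|u|^4$. Even after that, one must use the Gagliardo--Nirenberg inequality in its form that is \emph{linear} in $\|u'\|_2$; the naive quadratic form would leave an unabsorbable $\|u'\|_2^2$-term with coefficient $\sim\|u\|_2^2/|d|$, so the estimate would only close for large $|d|$ or small $F$ rather than for all parameters as claimed.
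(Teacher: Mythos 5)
Your architecture coincides with the paper's: the $L^2$-bound from the imaginary part of the equation tested with $\bar u$, the $L^\infty$-bound from the embedding $\|u\|_\infty\le\tfrac{1}{\sqrt{2\pi}}\|u\|_2+\sqrt{2\pi}\|u'\|_2$, and the improved bounds from the real part of the tested equation. (One slip in the last step: for $d>0$ it is the term $\int|u|^4$, not $-d\|u'\|_2^2$, that is discarded by sign; once you isolate $-\zeta\|u\|_2^2$ the dispersion term appears as $+d\|u'\|_2^2$ and must be \emph{bounded} by $dB^2\|u\|_2^{1/2}$ via \eqref{l2_prime} --- that is exactly where the summand $|d|B^2$ in $\tilde D$ comes from, and your final displayed inequality tacitly does this.) For the $H^1$-bound, your route --- differentiate the equation, test with $\overline{u'}$, take imaginary parts, integrate the cubic term by parts and substitute $\overline{u''}$ --- lands on exactly the same identity as the paper's, namely $\|u'\|_2^2=\tfrac1d\int_0^{2\pi}|u|^2\bigl(|u|^2-\RT(f\bar u)\bigr)ds+\RT\int_0^{2\pi} f'\overline{u'}\,ds$ (the paper reaches it by inserting $u=\mathrm{i}du''+\omega u'-\mathrm{i}\zeta u+\mathrm{i}|u|^2u+f$ into $\RT\int u'\overline{u'}\,ds$); also note that the $\|f'\|_\infty$ in $B$ arises from the boundary term $\RT f'\bar u\big\vert_0^{2\pi}$, which survives because $f\in H^2(0,2\pi)$ is not assumed periodic.

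The substantive divergence is in how the quartic term is then estimated, and here your proposal does not deliver the constants stated in the theorem. The paper observes that the integrand $h=|u|^2-\RT(f\bar u)$ is the derivative of $H=-d\,\IT(u'\bar u)+\tfrac{\omega}{2}|u|^2$ and satisfies $h\ge-\tfrac14|f|^2$ pointwise, whence $|H-H(0)|\le\tfrac14\|f\|_2^2$ on $[0,2\pi]$; integrating by parts gives $\tfrac1d\int|u|^2h\,ds=-\tfrac1d\int(|u|^2)'(H-H(0))\,ds\le\tfrac{\|f\|_2^2}{2|d|}\|u\|_2\|u'\|_2$, which is precisely the leading summand $\tfrac{F^{11/4}}{2|d|}\|u\|_2^{1/4}\|u'\|_2$ of $B$, with no appeal to $\|u\|_\infty$. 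Your alternative --- $\int|u|^4\le\|u\|_\infty^2\|u\|_2^2$, $|\int|u|^2\RT(f\bar u)|\le F\|u\|_\infty^2\|u\|_2$, then $\|u\|_\infty^2\le\tfrac1{2\pi}\|u\|_2^2+2\|u\|_2\|u'\|_2$ --- does close the quadratic inequality (your point about linearity in $\|u'\|_2$ is well taken), but after $\|u\|_2\le F$ it yields a linear coefficient of order $\tfrac{4F^3}{|d|}$ instead of $\tfrac{F^3}{2|d|}$, plus extra additive contributions of order $\tfrac{F^4}{\pi|d|}$ under the square root. So you prove an a-priori bound of the stated \emph{form} with a strictly larger explicit $B$ (hence larger $C$ and $D$), not the theorem verbatim; since these explicit constants are consumed by Corollary~\ref{Korollar} and Theorem~\ref{Uniqueness}, recovering them requires the antiderivative trick above.
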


\begin{Remark} The improvement in the second part of the theorem lies in the fact that the bound $D$ becomes small when the detuning $\zeta$ is such that $\zeta \sign(d)$ is very negative.
\end{Remark}

\begin{proof} The proof is divided into five steps. 

\medskip

\noindent
Step 1. We first prove the $L^2$ estimate 
\begin{equation}\label{L2-bound}
\|u\|_2 \leq F= \|f\|_2.
\end{equation} To this end we multiply the differential equation \eqref{TWE}  with $\bar u$ to obtain
\begin{equation}\label{testedeq}
-d u'' \bar u+\mathrm{i} \omega u' \bar u+(\zeta-\mathrm{i})|u|^2-|u|^4+\mathrm{i}f \bar u=0.
\end{equation} 
Taking the imaginary part yields
\begin{equation}\label{IT}
-d \IT(u'' \bar u)+\omega \RT(u'\bar u)-|u|^2+\RT(f \bar u)=0.
\end{equation} 
Let $h\coloneqq |u|^2-\RT(f \bar u)$, $H\coloneqq-d\IT(u'\bar u)+\frac{\omega}{2}|u|^2$. Then $H'=h$ by equation \eqref{IT} and $H(0)=H(2\pi)$ by the periodicity of $u$. Hence
\begin{equation*}
0=H(2\pi)-H(0)=\int_0^{2 \pi} \, h \,ds =  \int_0^{2 \pi} \, |u|^2-\RT(f \bar u) \,ds
\end{equation*}
which implies
\begin{equation*}
\|u\|_2^2=\int_0^{2\pi} \, \RT(f \bar u) \, ds  \leq \|f\|_2 \|u\|_2  = F\|u\|_2.
\end{equation*}

\medskip

\noindent
Step 2. Next we prove 
\begin{equation}\label{L2leadstoL2foru'}
\|u'\|_2 \leq B\|u\|_2^{\frac{1}{4}}\leq B F^{\frac{1}{4}} .
\end{equation}
From \eqref{TWE} we may isolate the linear term $u$ and insert its derivative $u'$ into the following calculation for $\|u'\|_2^2$: 
\begin{align*}
\|u'\|_2^2& = \RT \int_0^{2\pi}\, u' \bar u'  \, ds \stackrel{\eqref{TWE}}{=}\RT \int_0^{2\pi} \, (\mathrm{i} d u''+\omega u'-\mathrm{i}\zeta u+\mathrm{i}|u|^2 u + f)' \bar u' \, ds  \\
&=\RT \int_0^{2\pi} \, \mathrm{i} d u'''\bar u'+\omega u'' \bar u' -\mathrm{i}\zeta |u'|^2+\mathrm{i}(|u|^2 u)'\bar u' + f'\bar u'  \, ds \\
&= \int_0^{2\pi} \, -d(\IT(u''\bar u'))'+\left(\frac{\omega}{2}|u'|^2\right)' \, ds  - \IT \int_0^{2\pi} \, (|u|^2 u)'\bar u' \, ds +\RT \int_0^{2\pi} \, f'\bar u'  \, ds \\
&= \int_0^{2\pi} \, (|u|^2)' \IT (\bar u u') -\RT(f'' \bar u)  \, ds +\RT f' \bar u \big\vert_0^{2\pi} \\
&\leq \int_0^{2\pi} \, \frac{1}{d}(|u|^2)' \Big(\frac{\omega}{2}|u|^2-H\Big)  +\|f''\|_2 \|u\|_2 +2\|f'\|_{\infty} \|u\|_{\infty} \\
&= \int_0^{2\pi} \, \frac{\omega}{4d} (|u|^4)'-\frac{1}{d}(|u|^2)' H +\|f''\|_2 \|u\|_2 +2\|f'\|_{\infty} \|u\|_{\infty} \\
&= \int_0^{2\pi} \, -\frac{1}{d}(|u|^2)'(H-H(0)) +\|f''\|_2 \|u\|_2 +2\|f'\|_{\infty} \|u\|_{\infty}.
\end{align*}
Next notice the pointwise estimate 
\begin{equation*}
h =|u|^2-\RT(f \bar u)  \geq |u|^2-|f||u| \geq - \frac{1}{4} |f|^2
\end{equation*}
from which we deduce the following two-sided estimate for $H-H(0)$:
\begin{align*}
H(s)-H(0)&=\int_0^s \, h(r) \, dr \geq -\frac{1}{4}\|f\|_2^2 \quad (s\in[0,2\pi]) \quad \text{ and } \\
H(s)-H(0)&=H(s)-H(2\pi)=-\int_s^{2\pi} \, h(r) \, dr \leq \frac{1}{4}\|f\|_2^2 \quad (s\in[0,2\pi]).
\end{align*}
Continuing the above inequality for $\|u'\|_2^2$ we conclude
\begin{equation*}
\|u'\|_2^2 \leq \frac{ \|f\|_2^2}{2|d|}  \|u\|_2 \|u'\|_2  +\|f''\|_2 \|u\|_2 +2\|f'\|_{\infty} \|u\|_{\infty}.
\end{equation*}
Next we want to get rid of the $\|u\|_{\infty}$ term. For that we note that there exists $s_0\in [0, 2\pi]$ satisfying $|u^2(s_0)| \leq \frac{1}{2\pi}\|u\|_2^2$. We use this in the following way,
\begin{align*}
\|u\|_{\infty}^2 &\leq |u^2(s_0)|+\sup_{s\in[0,2\pi]} |u^2(s)-u^2(s_0)|  \leq \frac{1}{2\pi} \|u\|_2^2+\int_0^{2\pi} \, 2|u| |u'| \, ds \\
&\leq \frac{1}{2\pi}\|u\|_2^2+2\|u\|_2\|u'\|_2 \stackrel{\eqref{L2-bound}}{\leq} \frac{F}{2\pi}\|u\|_2+2\|u\|_2\|u'\|_2 \\
&\leq \|u\|_2\bigg(\frac{F}{2\pi} +1+\|u'\|_2^2\bigg) ,
\end{align*}
from where we find
\begin{equation*}
\|u\|_{\infty} \leq \|u\|_2^{\frac{1}{2}} \bigg(\sqrt{\frac{F}{2\pi}} +1+\|u'\|_2\bigg).
\end{equation*}
In total, we have
\begin{align*}
&\|u'\|_2^2 \leq \frac{\|f\|_2^2}{2|d|}  \|u\|_2 \|u'\|_2  +\|f''\|_2 \|u\|_2 +2\|f'\|_{\infty} \|u\|_2^{\frac{1}{2}} \bigg(\sqrt{\frac{F}{2\pi}} +1+\|u'\|_2\bigg) \\
&\stackrel{\eqref{L2-bound}}{\leq} \frac{F^{\frac{11}{4}}}{2|d|}  \|u\|_2^{\frac{1}{4}} \|u'\|_2  +\|f''\|_2 F^{\frac{1}{2}} \|u\|_2^{\frac{1}{2}} +2\|f'\|_{\infty} \|u\|_2^{\frac{1}{2}} \bigg(\sqrt{\frac{F}{2\pi}} +1\bigg) +2\|f'\|_{\infty} F^{\frac{1}{4}} \|u\|_2^{\frac{1}{4}} \|u'\|_2 \\
&= \bigg(\frac{F^{\frac{11}{4}}}{2|d|}+2\|f'\|_{\infty} F^{\frac{1}{4}}\bigg)  \|u\|_2^{\frac{1}{4}} \|u'\|_2  +\bigg(\|f''\|_2 F^{\frac{1}{2}}+2\|f'\|_{\infty}\bigg(\sqrt{\frac{F}{2\pi}} +1\bigg)\bigg)\|u\|_2^{\frac{1}{2}} \\
&=:A_1\|u\|_2^{\frac{1}{4}} \|u'\|_2  +A_2^2 \|u\|_2^{\frac{1}{2}}. 
\end{align*}
This is a quadratic inequality in $\|u'\|_2$ which implies 
$$
\|u'\|_2 \leq \frac{A_1\|u\|_2^{\frac{1}{4}}+\sqrt{A_1^2\|u\|_2^{\frac{1}{2}}+4 A_2^2\|u\|_2^{\frac{1}{2}}}}{2} \leq A_1\|u\|_2^{\frac{1}{4}}+A_2 \|u\|_2^{\frac{1}{4}}=B \|u\|_2^{\frac{1}{4}}
$$
as claimed.

\medskip

\noindent
Step 3. Here we prove 
\begin{equation}\label{firstLinftybound}
\|u\|_{\infty}\leq C.
\end{equation}
There exists $s_1 \in [0,2\pi]$ satisfying $|u(s_1)|\leq \frac{\|u\|_2}{\sqrt{2\pi}}$. The claim now follows from
\begin{align*}
\|u\|_{\infty} \leq & |u(s_1)|+\sup_{s\in[0,2\pi]} |u(s)-u(s_1)| \leq \frac{\|u\|_2}{\sqrt{2\pi}}+\|u'\|_1 
\leq \frac{\|u\|_2}{\sqrt{2\pi}}+\sqrt{2\pi}\|u'\|_2  \\
\stackrel{\eqref{L2-bound}, \eqref{L2leadstoL2foru'}}{\leq} & \left(\frac{F^\frac{3}{4}}{\sqrt{2\pi}}+\sqrt{2\pi}B\right)\|u\|_2^\frac{1}{4}
\stackrel{\eqref{L2-bound}}{\leq} C.
\end{align*}

\medskip

\noindent
Step 4. Next we show in the case $\zeta \operatorname{sign}(d)<-C^2 \mathbf{1}_{d<0}$ the additional $L^2$-bound 
\begin{equation}\label{SecondL2bound}
\|u\|_2 \leq D.
\end{equation}
 After integrating \eqref{testedeq} over $[0,2\pi]$ and taking the real part of the resulting equation we get
\begin{equation*}
 d \|u'\|_2^2 =\omega\int_0^{2\pi} \, \IT(u'\bar u) \, ds -\zeta \|u\|_2^2 +\|u\|_4^4 +\IT \int_0^{2\pi} \, f \bar u\, ds.
\end{equation*}
In order to prove \eqref{SecondL2bound} we first suppose $d>0$. Then we have on one hand 
\begin{equation} \label{one_hand}
d \|u'\|_2^2 \stackrel{\eqref{L2leadstoL2foru'}}{\leq} d B^2 \|u\|_2^{\frac{1}{2}}
\end{equation}
and on the other hand 
\begin{equation} \label{other_hand}
\begin{split}
& \omega\int_0^{2\pi} \, \IT(u'\bar u) \, ds -\zeta \|u\|_2^2 +\|u\|_4^4  +\IT \int_0^{2\pi} \, f\bar u\, ds \\ 
&\geq  -|\omega| \|u\|_2\|u'\|_2 -\zeta \|u\|_2^2 -F\|u\|_2 \\
&\stackrel{\eqref{L2leadstoL2foru'}}{\geq} -|\omega|B\|u\|_2^{\frac{5}{4}}  -\zeta \|u\|_2^2 -F\|u\|_2 \\
&\stackrel{\eqref{L2-bound}}{\geq} -|\omega|B F^{\frac{3}{4}}\|u\|_2^{\frac{1}{2}} -\zeta \|u\|_2^2 -F^{\frac{3}{2}}\|u\|_2^{\frac{1}{2}}.
\end{split}
\end{equation}
Combining the two estimates \eqref{one_hand}, \eqref{other_hand} and grouping quadratic terms and terms of power $\frac{1}{2}$ of $\|u\|_2$ on separate sides of the inequality we get 
$$
-\zeta\|u\|_2^2 \leq \Bigl(F^{\frac{3}{2}}+ |\omega|B F^{\frac{3}{4}} +dB^2\Bigr)\|u\|_2^{\frac{1}{2}}
$$
which finally implies $\|u\|_2 \leq D$ whenever $\zeta<0$. Assuming now $d<0$ the estimate \eqref{one_hand} becomes 
\begin{equation} \label{one_hand2}
d \|u'\|_2^2 \geq -|d| B^2 \|u\|_2^{\frac{1}{2}}
\end{equation}
whereas in \eqref{other_hand} the term $\|u\|_4^4$, which was previously dropped, now has to be estimated by $\|u\|_4^4\leq \|u\|_\infty^2\|u\|_2^2\leq C^2\|u\|_2^2$. The estimate \eqref{other_hand} now becomes 
\begin{equation} \label{other_hand2}
\begin{split}
&\omega\int_0^{2\pi} \, \IT(u'\bar u) \, ds -\zeta \|u\|_2^2 +\|u\|_4^4  +\IT \int_0^{2\pi} \, f \bar u \, ds\\
&\leq |\omega|B F^{\frac{3}{4}}\|u\|_2^{\frac{1}{2}} +(C^2-\zeta) \|u\|_2^2 +F^{\frac{3}{2}}\|u\|_2^{\frac{1}{2}}.
\end{split}
\end{equation} 
The combination of \eqref{one_hand2} and \eqref{other_hand2} leads to 
$$ 
(\zeta-C^2)\|u\|_2^2 \leq \Bigl(F^{\frac{3}{2}}+ |\omega|B F^{\frac{3}{4}} +|d|B^2\Bigr)\|u\|_2^{\frac{1}{2}}
$$
which again implies $\|u\|_2 \leq D$ whenever $-\zeta<-C^2$. 

\medskip

\noindent
Step 5. Finally we prove 
\begin{equation}\label{secondLinftybound}
\|u\|_{\infty}\leq  \biggl(\frac{F^{\frac{3}{4}}}{\sqrt{2\pi}}+\sqrt{2\pi}B\biggr)D^{\frac{1}{4}}
\end{equation}
whenever $\zeta \operatorname{sign}(d)<-C^2 \mathbf{1}_{d<0}$. For this we repeat Step 3 and use in the final estimate that $\|u\|_2\leq D$.
\end{proof}

\section{Proof of existence (Theorem~\ref{Existence}) and uniqueness (Theorem~\ref{Uniqueness}) statements} \label{sec:existence_and_uniqueness}

Let us consider the operator $L:H^2_{\text{per}}(0,2\pi) \to L^2(0,2\pi)$ with $Lu=L_0u -\iu u$ and $L_0 u=-d u''+\iu\omega u'+\zeta u$. Since $L_0:H^2_{\text{per}}(0,2\pi) \to L^2(0,2\pi)$ is self-adjoint its spectrum is real and we see that $L$ has spectrum on the line $-\iu+\R$. In particular, $L$ is invertible and $L^{-1}:L^2(0,2\pi)\to H^2_{\text{per}}(0,2\pi)$ is bounded. By using the compact embedding $H^2_{\text{per}}(0,2\pi)  \hookrightarrow H^1_{\text{per}}(0,2\pi)$ we see that 
$$
L^{-1}:L^2(0,2\pi) \to H^1_{\text{per}}(0,2\pi) \mbox{ is compact}.
$$ 
Since moreover $H^1_{\text{per}}(0,2\pi)$ is a Banach algebra we can rewrite \eqref{TWE} as a fixed point problem $u=\Phi(u)$, where $\Phi$ denotes the compact map 
\begin{equation*}
\Phi:H^1_{\text{per}}(0,2\pi) \to H^1_{\text{per}}(0,2\pi), \ \Phi(u)=L^{-1}\big(|u|^2 u-\mathrm{i}f(s)\big).
\end{equation*}
In order to prove our first existence result from Theorem~\ref{Existence}, let us recall Schaefer's fixed point theorem (\cite[Corollary 8.1]{Deimling}).

\begin{Theorem}[Schaefer's fixed point theorem]\label{Schaefer}
Let $X$ be a Banach space and $\Phi:X \to X$ be compact. Suppose that the set 
\begin{equation*}
\{x \in X: \, x=\lambda \Phi(x) \text{ for some } \lambda \in(0,1)\}
\end{equation*}
 is bounded. Then $\Phi$ has a fixed point. 
\end{Theorem}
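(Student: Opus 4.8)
The plan is to deduce Schaefer's theorem from the Schauder fixed point theorem by a radial retraction argument: the hypothesized a-priori bound on the solution set of $x=\lambda\Phi(x)$ is turned into an invariant closed ball for a suitably truncated version of $\Phi$, to which Schauder applies, and then the truncation is shown to be inactive at the resulting fixed point.

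First I would fix the quantitative data. Let $M\ge 0$ be a bound for the set $\{x\in X:\,x=\lambda\Phi(x)\text{ for some }\lambda\in(0,1)\}$, so that every such $x$ satisfies $\|x\|\le M$, and choose any radius $R>M$. Write $K_R=\{x\in X:\|x\|\le R\}$, which is closed, bounded and convex, and introduce the radial retraction $r:X\to K_R$ given by $r(x)=x$ for $\|x\|\le R$ and $r(x)=Rx/\|x\|$ for $\|x\|>R$. This map is continuous and its range is exactly $K_R$. Then I consider the truncated map $\Psi:=r\circ\Phi:K_R\to K_R$. Since $\Phi$ is compact, $\Phi(K_R)$ is relatively compact, and because $r$ is continuous, $\Psi(K_R)=r(\Phi(K_R))$ is relatively compact as well; hence $\Psi$ is a compact self-map of the closed bounded convex set $K_R$. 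Schauder's fixed point theorem then produces a point $x_0\in K_R$ with $x_0=\Psi(x_0)=r(\Phi(x_0))$.

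The remaining task is to upgrade $x_0$ to a genuine fixed point of $\Phi$, and here I would argue by contradiction. Suppose $\|\Phi(x_0)\|>R$. Then the retraction acts nontrivially, $r(\Phi(x_0))=R\Phi(x_0)/\|\Phi(x_0)\|$, so $x_0=\lambda\Phi(x_0)$ with $\lambda:=R/\|\Phi(x_0)\|\in(0,1)$. Thus $x_0$ belongs to the hypothesized bounded set and satisfies $\|x_0\|\le M<R$. On the other hand, in this case $\|x_0\|=\|r(\Phi(x_0))\|=R$, a contradiction. Hence $\|\Phi(x_0)\|\le R$, so $r(\Phi(x_0))=\Phi(x_0)$ and $x_0=\Phi(x_0)$ is the desired fixed point.

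The only genuinely delicate point is the final case distinction: it rests on combining the two a-priori facts $\|x_0\|\le M$ (from membership in the hypothesized bounded set) and $\|x_0\|=R$ (from the retraction landing on the boundary sphere), and the strict choice $R>M$ is precisely what makes the ``bad'' case vacuous. Everything else is routine once Schauder's theorem is available and the compactness of $r\circ\Phi$ has been checked; these are exactly the two ingredients that guarantee the argument closes.
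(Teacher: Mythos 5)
Your argument is correct. The paper does not actually prove this statement --- it is recalled as a known result with a citation to Deimling's book --- so there is no in-paper proof to compare against; what you have written is the standard textbook derivation of Schaefer's theorem from Schauder's theorem via the radial retraction onto a ball $K_R$ with $R$ strictly larger than the a-priori bound $M$. All the steps check out: $\Psi=r\circ\Phi$ is a continuous compact self-map of the closed bounded convex set $K_R$ (note that compactness of $\Phi$ in the paper's convention already includes continuity), Schauder applies in its form for compact maps on closed bounded convex sets, and your final dichotomy is airtight because the ``retraction active'' case forces simultaneously $\|x_0\|=R$ and $\|x_0\|\le M<R$. The only stylistic remark is that you could phrase the last step without contradiction: if $\|\Phi(x_0)\|>R$ then $x_0=\lambda\Phi(x_0)$ with $\lambda\in(0,1)$ places $x_0$ in the hypothesized set, which is impossible since that set lies in the open ball of radius $R$ while $x_0$ lies on its boundary sphere --- but this is exactly what you wrote, just reorganized.
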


\begin{proof}[Proof of Theorem~\ref{Existence}]
Let $u\in H^1_{\text{per}}(0,2\pi)$ and $u=\lambda \Phi(u)$ for some $\lambda \in (0,1)$. Then $u\in H^2_{\text{per}}(0,2\pi)$ and 
\begin{equation*}
-d u''+\mathrm{i} \omega u'+(\zeta-\mathrm{i})u-\lambda |u|^2 u+\mathrm{i}\lambda f(s)=0.
\end{equation*}
Let us now define $v \in H^2_{\text{per}}(0,2\pi)$ by $v(s)=\sqrt{\lambda}u(s)$. Then 
\begin{equation*}-d v''+\mathrm{i} \omega v'+(\zeta-\mathrm{i})v-|v|^2 v+\mathrm{i} \tilde f(s)=0
\end{equation*}
with $\tilde f=\lambda^\frac{3}{2} f$. Estimate \eqref{l2} of Theorem~\ref{a-priori} with $\tilde F=F(\lambda^\frac{3}{2} f)=\lambda^\frac{3}{2}F$ implies
\begin{equation*}
\|u\|_2 = \frac{1}{\sqrt{\lambda}} \|v\|_2 \leq \frac{1}{\sqrt{\lambda}} \tilde F=\lambda F\leq F.
\end{equation*}
Using \eqref{l2_prime} from Theorem~\ref{a-priori} with $\tilde B=B(d,\lambda^\frac{3}{2} f)$ we also find 
\begin{align*}
\|u'\|_2 &= \frac{1}{\sqrt{\lambda}} \|v'\|_2 \leq  \frac{1}{\sqrt{\lambda}}\tilde B \tilde F^{\frac{1}{4}} \\
& = \lambda^4\frac{F^3}{2|d|}+2\lambda^{\frac{7}{4}}\|f'\|_{\infty} F^{\frac{1}{2}}+\sqrt{\lambda^2 \|f''\|_2 F+2\lambda^{\frac{5}{4}}\|f'\|_{\infty} \biggl(\frac{\lambda^{\frac{3}{4}} F}{\sqrt{2\pi}}+\sqrt{F}\biggr)} \\
 &\leq \frac{F^3}{2|d|}+2\|f'\|_{\infty} F^{\frac{1}{2}}+\sqrt{\|f''\|_2 F+2\|f'\|_{\infty} \biggl(\frac{F}{\sqrt{2\pi}}+\sqrt{F}\biggr)}=B F^{\frac{1}{4}}.
\end{align*}
The assertion now follows from Theorem \ref{Schaefer}.
\end{proof}

For the next uniqueness result, cf. Theorem~\ref{Uniqueness}, let us rewrite the constant $D$ from Theorem~\ref{a-priori} as 
$$
D=D(d,f,\omega,\zeta)=\biggl(\frac{\tilde D}{(-\zeta \operatorname{sign}(d)-C^2 \mathbf{1}_{d<0})_+}\biggr)^{\frac{2}{3}}
$$
with 
$$
\tilde D=\tilde D(d,f,\omega) = F^{\frac{3}{2}}+ |\omega|B F^{\frac{3}{4}} +|d|B^2.
$$
Our result complements the existence statement provided in Theorem~\ref{Existence} by a uniqueness statement. It consists of three cases: (i) and (ii) cover the case where $|\zeta|\gg 1$ is sufficiently large whereas (iii) builds upon $\|f\|\ll 1$ measured in a suitable norm  $\|\cdot\|$ such that the constant $C=C(d,f)$ becomes small. This is the case, e.g., if $\|f\|_2\ll 1$ and $\|f''\|_2$ remains bounded.

\begin{Theorem}\label{Uniqueness}
Let $d \in \R\setminus\{0\}$, $\zeta,\omega\in\R$ and $f \in H^2(0,2\pi)$. Then \eqref{TWE} has a unique solution $u \in H^2_{\text{per}}(0,2\pi)$ in the following three cases,
\begin{itemize}
\item[(i)] $$\sign(d)\zeta <\zeta_{*},$$
\item[(ii)] $$\sign(d)\zeta >\zeta^*,$$
\item[(iii)] $$\sqrt{3}C<1,$$
\end{itemize}
where $\zeta_{*}\leq 0 \leq \zeta^*$ are given by
\begin{align*}
\zeta_{*}&=\zeta_{*}(d,f,\omega)=-C^2 \mathbf{1}_{d<0} -\frac{27(F^{\frac{3}{4}}+2\pi B)^6 \tilde D}{8\pi^3}, \\
\zeta^*&=\zeta^*(d,f,\omega)=3C^2+\frac{\omega^2}{4|d|},
\end{align*}
and $F=F(f)$, $B=B(d,f)$, $C=C(d,f)$ are the constants from Theorem~\ref{a-priori}. 
\end{Theorem}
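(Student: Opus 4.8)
The plan is to compare two hypothetical solutions through the fixed point formulation already set up at the start of this section. Writing $g(u)=|u|^2u$, every solution of \eqref{TWE} is a fixed point of $\Phi(u)=L^{-1}\bigl(g(u)-\iu f\bigr)$, where $L=L_0-\iu$ and $L_0u=-du''+\iu\omega u'+\zeta u$. If $u_1,u_2$ are two solutions, subtracting the fixed point identities gives
$$u_1-u_2=L^{-1}\bigl(g(u_1)-g(u_2)\bigr),$$
so that taking $L^2$-norms yields
$$\|u_1-u_2\|_2\le \|L^{-1}\|_{L^2\to L^2}\,\|g(u_1)-g(u_2)\|_2.$$
Uniqueness follows as soon as the product of the operator norm of $L^{-1}$ and the Lipschitz constant of $g$ on the relevant ball is strictly below one.

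First I would record the pointwise Lipschitz bound for the cubic nonlinearity. Since the real-linear differential $Dg(z)[h]=2|z|^2h+z^2\overline h$ has operator norm $3|z|^2$, integrating along the segment from $u_2(s)$ to $u_1(s)$ produces $|g(u_1)-g(u_2)|\le 3\max(|u_1|,|u_2|)^2|u_1-u_2|$ pointwise, hence
$$\|g(u_1)-g(u_2)\|_2\le 3\max\bigl(\|u_1\|_\infty,\|u_2\|_\infty\bigr)^2\,\|u_1-u_2\|_2.$$
Here the a-priori $L^\infty$-bounds of Theorem~\ref{a-priori} enter: in cases (ii) and (iii) I use $\|u_i\|_\infty\le C$, whereas in case (i) I use the improved bound $\|u_i\|_\infty\le \bigl(\tfrac{F^{3/4}}{\sqrt{2\pi}}+\sqrt{2\pi}B\bigr)D^{1/4}$, which is available precisely because $\sign(d)\zeta<\zeta_*\le -C^2\mathbf{1}_{d<0}$ places us in the regime of the second part of Theorem~\ref{a-priori} (the positive part in $D$ is then active).

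Next I would estimate $\|L^{-1}\|_{L^2\to L^2}$ via Fourier series. Since $L_0$ is self-adjoint with eigenvalues $dm^2-\omega m+\zeta$ on the basis $\eu^{\iu ms}$, the operator $L$ has eigenvalues $dm^2-\omega m+\zeta-\iu$, so
$$\|L^{-1}\|_{L^2\to L^2}=\Bigl(\min_{m\in\Z}\bigl((dm^2-\omega m+\zeta)^2+1\bigr)\Bigr)^{-1/2}\le \min\Bigl\{1,\ \bigl(\min_{m\in\Z}|dm^2-\omega m+\zeta|\bigr)^{-1}\Bigr\}.$$
The universal bound $\|L^{-1}\|\le 1$ handles cases (i) and (iii). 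For case (ii) I complete the square, $dm^2-\omega m+\zeta=d(m-\tfrac{\omega}{2d})^2+\zeta-\tfrac{\omega^2}{4d}$, whose sign is constant in $m$ under the hypothesis; this gives $\min_{m}|dm^2-\omega m+\zeta|\ge \sign(d)\zeta-\tfrac{\omega^2}{4|d|}$ and therefore $\|L^{-1}\|\le \bigl(\sign(d)\zeta-\tfrac{\omega^2}{4|d|}\bigr)^{-1}$.

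Finally I assemble the three contraction conditions. In case (iii) the factor is $\le 3C^2<1$, which is exactly $\sqrt{3}C<1$. In case (ii) the factor is $\le 3C^2\bigl(\sign(d)\zeta-\tfrac{\omega^2}{4|d|}\bigr)^{-1}$, which is $<1$ iff $\sign(d)\zeta>3C^2+\tfrac{\omega^2}{4|d|}=\zeta^*$. In case (i) the factor is $\le 3\bigl(\tfrac{F^{3/4}}{\sqrt{2\pi}}+\sqrt{2\pi}B\bigr)^2D^{1/2}$; demanding this to be $<1$ is equivalent, after substituting $D=\bigl(\tilde D/(-\sign(d)\zeta-C^2\mathbf{1}_{d<0})_+\bigr)^{2/3}$ and rewriting $\bigl(\tfrac{F^{3/4}}{\sqrt{2\pi}}+\sqrt{2\pi}B\bigr)^6=\tfrac{1}{8\pi^3}(F^{3/4}+2\pi B)^6$, to $\sign(d)\zeta<\zeta_*$. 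The \emph{hard part} will be case (i): one must confirm the improved a-priori bounds hold in the stated regime and then track the fractional exponents through the definition of $D$, with the constant $\tfrac{27}{8\pi^3}$ arising from the cube $9^{3/2}=27$ of the required smallness together with the factor $(2\pi)^{-3}$; cases (ii) and (iii) are routine once the operator-norm estimate is in place.
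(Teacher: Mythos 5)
Your proposal is correct and follows essentially the same route as the paper's proof: the fixed point identity $u_1-u_2=L^{-1}(g(u_1)-g(u_2))$, the Lipschitz constant $3R^2$ with $R$ taken from the a-priori $L^\infty$-bounds of Theorem~\ref{a-priori} (the improved bound $D^{1/4}$-version in case (i)), and the two Fourier-series estimates $\|L^{-1}\|\le 1$ and $\|L^{-1}\|\le \sign(d)\bigl(\zeta-\tfrac{\omega^2}{4d}\bigr)^{-1}$, assembled case by case exactly as you describe. The only cosmetic difference is that the paper first disposes of $f=0$ and invokes Theorem~\ref{Existence} for the existence half of the claim before running the contraction argument.
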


\begin{proof}
It suffices to consider the case $f \neq 0$. By Theorem~\ref{Existence} we know that \eqref{TWE} has at least one solution $u_1\in H^2_{\text{per}}(0,2\pi)$. Now let $u_2\in H^2_{\text{per}}(0,2\pi)$ denote an additional solution and define
\begin{equation*}
R=R(d,f,\omega,\zeta)=\begin{cases} \min\biggl\{C,\biggl(\frac{F^{\frac{3}{4}}}{\sqrt{2\pi}}+\sqrt{2\pi}B\biggr)D^{\frac{1}{4}}\biggr\}, & \zeta\operatorname{sign}(d)+C^2 \mathbf{1}_{d<0}<0, \\ C, & \zeta\operatorname{sign}(d)+C^2 \mathbf{1}_{d<0}\geq 0. \end{cases}
\end{equation*}
Then $\|u_j\|_{\infty}\leq R$ for $j=1,2$ by Theorem~\ref{a-priori}, which easily implies
\begin{equation*}
\big\||u_1|^2 u_1-|u_2|^2u_2\big\|_2 \leq 3R^2 \|u_1-u_2\|_2.
\end{equation*} 
Since $u_j, j=1,2$ solves the fixed point problem $u_j=\Phi(u_j)$ we obtain 
\begin{equation*} 
\|u_1-u_2\|_2=\|\Phi(u_1)-\Phi(u_2)\|_2 \leq 3R^2\|L^{-1}\|\|u_1-u_2\|_2,
\end{equation*}
where $\|L^{-1}\|=\sup_{v \in L^2(0,2\pi), \|v\|_2=1} \|L^{-1}v\|_2 $. 
Next we show $3R^2\|L^{-1}\|<1$ which implies $u_1=u_2$ and thus finishes the proof. To this end we decompose a function $v\in L^2(0,2\pi)$ into its Fourier series, i.e., $v=\sum_{m\in\Z} v_m \mathrm{e}^{\mathrm{i}ms}$ so that
\begin{equation*}
L^{-1}v=\sum\limits_{m\in\Z} \frac{v_m}{dm^2-\omega m+\zeta-\mathrm{i}}\, \mathrm{e}^{\mathrm{i}ms}.
\end{equation*}
On one hand we get $\|L^{-1}\| \leq 1$ since
\begin{equation*}
\|L^{-1}v\|_2^2=2\pi \sum\limits_{m\in\Z} \frac{|v_m|^2}{1+(d m^2-\omega m+\zeta)^2} \leq 2\pi \sum\limits_{m\in\Z}|v_m|^2=\|v\|_2^2.
\end{equation*}
On the other hand, if $\operatorname{sign}(d)\big(\zeta-\frac{\omega^2}{4d}\big)>0$, we get 
\begin{align*}
\|L^{-1}v\|_2^2&=2\pi \sum\limits_{m\in\Z} \frac{|v_m|^2}{1+(d m^2-\omega m+\zeta)^2} =2\pi \sum\limits_{m\in\Z} \frac{|v_m|^2}{1+\Big(d\big(m-\frac{\omega}{2d}\big)^2+\zeta-\frac{\omega^2}{4d}\Big)^2} \\
&\leq 2\pi \sum\limits_{m\in\Z} \frac{|v_m|^2}{\big(\zeta-\frac{\omega^2}{4d}\big)^2}=\frac{1}{\big(\zeta-\frac{\omega^2}{4d}\big)^2} \|v\|_2^2,
\end{align*}
i.e. $\|L^{-1}\| \leq \operatorname{sign}(d)\big(\zeta-\frac{\omega^2}{4d}\big)^{-1}$.

\medskip

In case (i) where $\operatorname{sign}(d)\zeta <\zeta_*<-C^2 \mathbf{1}_{d<0} \leq \textcolor{black}{0}$ we use $\|L^{-1}\|\leq 1$ and find by the definition of $R$ and $\zeta_*$ that
\begin{align*}
3R^2 \|L^{-1}\| &\leq 3 \frac{(F^{\frac{3}{4}}+2\pi B)^2}{2\pi} D^{\frac{1}{2}} \\
&=  3 \frac{(F^{\frac{3}{4}}+2\pi B)^2}{2\pi} \biggl(\frac{\tilde D}{-\zeta \operatorname{sign}(d)-C^2 \mathbf{1}_{d<0}}\biggr)^{\frac{1}{3}}  \\ 
&< 3 \frac{(F^{\frac{3}{4}}+2\pi B)^2}{2\pi} \biggl(\frac{\tilde D}{-\zeta_* -C^2 \mathbf{1}_{d<0}}\biggr)^{\frac{1}{3}}=1.
\end{align*}

In case (ii) where $\operatorname{sign}(d)\zeta >\zeta^* > \frac{\omega^2}{4|d|}\geq 0$ we use $\|L^{-1}\| \leq \operatorname{sign}(d)\big(\zeta-\frac{\omega^2}{4d}\big)^{-1}$ and get by the choice of $\zeta^*$
\begin{equation*}
3R^2 \|L^{-1}\| \leq \frac{3 C^2}{\operatorname{sign}(d)(\zeta-\frac{\omega^2}{4d})} <\frac{3C^2}{\zeta^*-\frac{\omega^2}{4|d|}}=1.
\end{equation*}

In case (iii) where $\sqrt{3}C<1$ we use $\|L^{-1}\|\leq 1$ to conclude
\begin{equation*}
3R^2 \|L^{-1}\| \leq 3 C^2 <1.
\end{equation*}
\end{proof}

\section{Proof of the continuation results} \label{sec:continuation}

In this section we continue to use the notion for the operator $L:H^2_{\text{per}}(0,2\pi) \to L^2(0,2\pi)$ from Section~\ref{sec:a-priori}. We also use that $L^{-1}:L^2(0,2\pi) \to H^2_{\text{per}}(0,2\pi)$ is bounded and that $L^{-1}:L^2(0,2\pi) \to H^1_{\text{per}}(0,2\pi)$ is compact. We first consider continuation from a trivial solution. In order to prove Theorem~\ref{Hauptresultat} let us provide the following global continuation theorem.

\begin{Theorem}\label{GCT}
Let $X$ be a real Banach space and $K \in C^1(\R\times X,X)$ be compact. We consider the problem 
\begin{equation}\label{CPOI}
T(\lambda,x)\coloneqq x-K(\lambda,x)=0.
\end{equation}
Assume that $T(\lambda_0,x_0)=0$ and that $\partial_x T(\lambda_0,x_0)$ is invertible. Then there exists a connected and closed set (=continuum) $\mathcal{C}^+ \subset [\lambda_0,\infty) \times X$ of solutions of \eqref{CPOI} with $(\lambda_0,x_0)\in \mathcal{C}^+$. For $\mathcal{C}^+$ one of the following alternatives holds:
\begin{enumerate}[(a)]
\item $\mathcal{C}^+$ is unbounded,
\end{enumerate}
or 
\begin{enumerate}[(b)]
\item $\exists x_0^+ \in  X\setminus\{x_0\}: \, (\lambda_0,x_0^+) \in \mathcal{C}^+.$
\end{enumerate}
If one chooses $\mathcal{C}^+$ to be maximally connected then there is no more a strict alternative between (a) and (b) and instead at least one of the two (possibly both) properties holds.
\end{Theorem}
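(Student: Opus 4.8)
The plan is to combine the Leray--Schauder degree for compact perturbations of the identity with a topological separation argument. Since $K$ is compact, $T(\lambda,\cdot)=I-K(\lambda,\cdot)$ is a compact vector field on $X$, so the Leray--Schauder degree is available for every $\lambda$. Because $\partial_x T(\lambda_0,x_0)=I-\partial_x K(\lambda_0,x_0)$ is an invertible compact perturbation of the identity, $x_0$ is an isolated zero of $T(\lambda_0,\cdot)$, and the implicit function theorem (using $K\in C^1$) provides a local solution branch $\lambda\mapsto(\lambda,x(\lambda))$ for $\lambda\in[\lambda_0,\lambda_0+\varepsilon)$; hence the connected component $\mathcal{C}^+$ of the solution set $S^+=\{(\lambda,x)\in[\lambda_0,\infty)\times X:T(\lambda,x)=0\}$ through $(\lambda_0,x_0)$ is nontrivial, and being a component of the closed set $S^+$ it is itself closed, i.e.\ a continuum. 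The key quantitative input is that the Leray--Schauder index satisfies
$$
\operatorname{ind}\bigl(T(\lambda_0,\cdot),x_0\bigr)=(-1)^{\beta}\neq 0,
$$
where $\beta$ is the finite sum of algebraic multiplicities of the eigenvalues of $\partial_x K(\lambda_0,x_0)$ lying in $(1,\infty)$. I would then argue by contradiction for the maximal component: assume $\mathcal{C}^+$ is bounded \emph{and} that $\mathcal{C}^+\cap(\{\lambda_0\}\times X)=\{(\lambda_0,x_0)\}$, i.e.\ both alternatives (a) and (b) fail, and derive a contradiction.

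Under this assumption $\mathcal{C}^+$ is compact: boundedness confines $\lambda$ to a bounded interval, and the relation $x=K(\lambda,x)$ together with compactness of $K$ makes the solution set precompact, while closedness comes from continuity of $T$. I would next \emph{isolate} $\mathcal{C}^+$. Fix a large box $Q=[\lambda_0,\Lambda]\times\overline{B_\rho(0)}$ containing $\mathcal{C}^+$ so that $\mathcal{C}^+$ meets neither the top face $\{\Lambda\}\times X$ nor the lateral face $[\lambda_0,\Lambda]\times\partial B_\rho(0)$, and meets $\{\lambda_0\}\times X$ only at $(\lambda_0,x_0)$. On the compact metric space $M=S^+\cap Q$ I would apply Whyburn's separation lemma with $A=\{(\lambda_0,x_0)\}$ and $B$ the compact set of solutions on the ``exit boundary'' of $Q$: since the component of $M$ through $(\lambda_0,x_0)$ is exactly $\mathcal{C}^+$, which by assumption does not reach the exit boundary, no connected subset of $M$ meets both $A$ and $B$. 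The lemma then yields a splitting $M=M_A\sqcup M_B$ into disjoint compacta with $A\subset M_A$ and $B\subset M_B$, so $\operatorname{dist}(M_A,M_B)>0$. Thickening $M_A$ by a small fraction of this distance produces a bounded open set $\Omega\subset[\lambda_0,\infty)\times X$ with $\mathcal{C}^+\subset\Omega$, whose lateral boundary carries no solutions of $T=0$, and whose slice $\Omega_{\lambda_0}$ is a neighborhood of $x_0$ containing no further solution at level $\lambda_0$.

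Finally I would run the degree argument on the slices $\Omega_\lambda=\{x:(\lambda,x)\in\Omega\}$. Because no solution lies on the lateral boundary $\bigcup_\lambda\{\lambda\}\times\partial\Omega_\lambda$, the map $\lambda\mapsto\deg\bigl(T(\lambda,\cdot),\Omega_\lambda,0\bigr)$ is well defined and, by the generalized homotopy invariance of the Leray--Schauder degree, constant on the connected interval $[\lambda_0,\Lambda]$. At $\lambda=\lambda_0$ the slice contains only the nondegenerate zero $x_0$, so this degree equals $\operatorname{ind}(T(\lambda_0,\cdot),x_0)=\pm1$; for $\lambda$ beyond the $\lambda$-extent of the bounded set $\Omega$ the slice $\Omega_\lambda$ is empty and the degree is $0$. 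This contradicts constancy, so the contradiction hypothesis is untenable and at least one of (a), (b) must hold for the maximal component $\mathcal{C}^+$; both may hold simultaneously, which is precisely why the alternative ceases to be exclusive. The main obstacle is the isolation step: building the open set $\Omega$ with a solution-free lateral boundary that separates the bounded component from the remainder of the solution set. This is exactly where the compactness of $K$ (to render the solution set locally compact) and Whyburn's lemma are indispensable, whereas the index computation and the homotopy invariance of the degree are then routine.
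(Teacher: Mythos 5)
Your argument is correct in substance, but it follows a genuinely different route from the paper: the paper does not prove Theorem~\ref{GCT} at all. In Remark~\ref{RemarkHauptresultat}($\alpha$) it simply cites \cite[Theorem 3.3]{Bandle} or \cite[Theorem 1.3.2]{Schmitt}, and the only thing actually verified there is the hypothesis of those results, namely $\operatorname{deg}(T(\lambda_0,\cdot),B_{\varepsilon}(x_0),0)=\operatorname{deg}(\partial_x T(\lambda_0,x_0),B_{\varepsilon}(0),0)\neq 0$, which holds because the linearization is an invertible compact perturbation of the identity. What you have written is, in effect, the proof of the cited continuation theorem itself: nonzero local Leray--Schauder index, compactness of the bounded part of the solution set coming from $x=K(\lambda,x)$, Whyburn's separation lemma to build an isolating bounded open set $\Omega$ with solution-free lateral boundary, and the generalized homotopy invariance of the degree on the slices $\Omega_\lambda$ to produce the contradiction $\pm1=0$. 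This is the standard architecture; your version buys self-containedness at the cost of length, whereas the paper's one-line citation buys brevity and defers exactly the separation machinery you spell out.

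One step of your isolation argument needs more care than you give it. If $B$ consists only of the solutions on the exit boundary of $Q$, nothing prevents solutions of $T(\lambda_0,\cdot)=0$ that lie outside $\mathcal{C}^+$ from ending up in $M_A$; the thickened set $\Omega$ would then have a slice $\Omega_{\lambda_0}$ containing zeros besides $x_0$, and $\operatorname{deg}(T(\lambda_0,\cdot),\Omega_{\lambda_0},0)$ would be a sum of indices that could vanish, breaking the final contradiction. The standard repair: since $x_0$ is an isolated zero of $T(\lambda_0,\cdot)$, enlarge $B$ to include all level-$\lambda_0$ solutions outside a small closed ball around $x_0$. This enlarged $B$ is still compact, and it is still unreachable from $(\lambda_0,x_0)$ by a connected subset of $M$ precisely because alternative (b) is assumed to fail; Whyburn's lemma then yields an $\Omega$ whose slice at $\lambda_0$ really does contain $x_0$ as its only solution, which is the property your degree computation uses.
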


\begin{Remark}\label{RemarkHauptresultat}
($\alpha$) The theorem follows from \cite[Theorem 3.3]{Bandle} or \cite[Theorem 1.3.2]{Schmitt} since $\operatorname{deg}(T(\lambda_0,\cdot),B_{\varepsilon}(x_0),0)=\operatorname{deg}(\partial_x T(\lambda_0,x_0),B_{\varepsilon}(0),0) \neq 0$ because $\partial_x T(\lambda_0,x_0)$ is invertible. \\
($\beta$) There exists also a continuum $\mathcal{C}^- \subset (-\infty,\lambda_0] \times X$ of solutions of \eqref{CPOI} with $(\lambda_0,x_0)\in \mathcal{C}^-$ satisfying one of the alternatives of the theorem. \\
($\gamma$) Alternative (a) of Theorem~\ref{GCT} means that $\mathcal{C}^+$ is unbounded either in the Banach space direction $X$ or in the parameter direction $[\lambda_0,\infty)$ or in both. If unboundedness in the Banach space direction is excluded on compact intervals $[\lambda_0,\Lambda]$, e.g., by a-priori bounds, then unboundedness in the parameter direction follows, i.e., the projection of $\mathcal{C}^+$ onto $[\lambda_0,\infty)$ denoted by $\operatorname{pr}_1(\mathcal{C}^+)$ must coincide with $[\lambda_0,\infty)$. This is an existence result for all $\lambda\geq\lambda_0$ which is one aspect of Theorem \ref{Hauptresultat}.\\
($\delta$) Alternative (b) of Theorem~\ref{GCT} means that the continuum $\mathcal{C}^+$ returns to the $\lambda=\lambda_0$ line at a point $x_0^+\not = x_0$.
\end{Remark}

\begin{proof}[Proof of Theorem~\ref{Hauptresultat}] Let $K:\R\times H^1_{\text{per}}(0,2\pi) \to H^1_{\text{per}}(0,2\pi), \, K(f_1,u)\coloneqq L^{-1}(|u|^2 u-\mathrm{i}f_0-\mathrm{i}f_1 e(s))$ and $T(f_1,u)\coloneqq u-K(f_1,u)$. Then, as explained before Theorem~\ref{Schaefer}, $K$ is compact and 
\begin{equation*}
T(0,u_0)=u_0-L^{-1}(|u_0|^2 u_0-\mathrm{i}f_0)\stackrel{\eqref{trivial}}{=}u_0-L^{-1}\big((\zeta-\mathrm{i})u_0\big)=u_0-u_0=0.
\end{equation*}
Next we show that $\partial_u T(0,u_0)$ is invertible. To this end note that 
\begin{equation*}
\partial_u T(0,u_0) \varphi =\varphi -L^{-1} (2|u_0|^2 \varphi+u_0^2 \overline{\varphi}) \text{ for } \varphi \in H^1_{\text{per}}(0,2\pi)
\end{equation*}
and hence, as a compact perturbation of the identity, $\partial_u T(0,u_0)$ is invertible if it is injective. Since $u_0$ is constant this amounts exactly to the characterization of non-degeneracy of $u_0$ as described in Lemma~\ref{charactrization_nondeg}.

\medskip

Now assertion (i) follows from the classical implicit function theorem and Theorem~\ref{GCT} yields that the maximal continuum $\mathcal{C}^+ \subset [0,\infty) \times H^1_{\text{per}}(0,2\pi)$ of solutions $(f_1,u)$ of \eqref{TWE} with $(0,u_0)\in\mathcal{C}^+$  is unbounded or returns to another solution at $f_1=0$. The continuum $\mathcal{C}^+$ in fact belongs to $[0,\infty)\times H^2_{\text{per}}(0,2\pi)$ and persists as a connected and closed set in the stronger topology of $[0,\infty)\times H^2_{\text{per}}(0,2\pi)$. Next we show that the unboundedness of $\mathcal{C}^+$ coincides with $\operatorname{pr}_1(\mathcal{C}^+)=[0,\infty)$. According to Remark~\ref{RemarkHauptresultat}.($\gamma$) we need to show that unboundedness in the Banach space direction $H^1_\text{per}(0,2\pi)$ is excluded for $f_1$ in bounded intervals. To see this suppose that $0\leq f_1 \leq M$ for all $(f_1,u)\in \mathcal{C}^+$ and some constant $M>0$. Then, by the a-priori bounds \eqref{l2} and \eqref{l2_prime} from Theorem~\ref{a-priori} we get
\begin{equation*}
\|u\|_2 \leq \|f_0+f_1 e(s) \|_2 \leq \sqrt{2\pi}|f_0|+M\|e\|_2=:N=N(f_0,M,e)
\end{equation*}
and 
\begin{equation*}
\|u'\|_2 \leq \frac{N^3}{2|d|}+2M \|e'\|_{\infty} N^{\frac{1}{2}} +\sqrt{M\|e''\|_2 N+2M\|e'\|_{\infty}\biggl(\frac{N}{\sqrt{2\pi}}+\sqrt{N}\biggr)}
\end{equation*}
for all $(f_1,u)\in \mathcal{C}^+$. Hence $\mathcal{C}^+$ is bounded in the Banach space direction. Assertion (ii) follows in a similar way by using the a-priori bounds of Theorem~\ref{a-priori} and the fact that by \eqref{TWE} the bounds for $\|u\|_2$, $\|u'\|_2$ and $\|u\|_{\infty}$ translate into a bound for $\|u''\|_2$.

According to Remark~\ref{RemarkHauptresultat}.($\beta$) the above line of arguments also yield that the maximal continuum $\mathcal{C}^- \subset (-\infty,0] \times  H^2_{\text{per}}(0,2\pi)$ of solutions of \eqref{TWE} with $(0,u_0)\in\mathcal{C}^-$ satisfies $\operatorname{pr}_1(\mathcal{C}^-)=(-\infty,0]$ or returns to another solution at $f_1=0$. This finishes the proof.
\end{proof}

\begin{proof}[Proof of Corollary~\ref{Korollar}] The result follows from a combination of Theorem~\ref{Hauptresultat} and Theorem~\ref{Uniqueness}. For $f_1=0$, i.e. $f(s)= f_0$, the abbreviations $F,B,C$ from Theorem~\ref{a-priori} and $\tilde D$ from Theorem~\ref{Uniqueness} reduce to  
\begin{align*}
F(f_0)&=\sqrt{2\pi}|f_0|, \quad B(d,f_0)=2^{\frac{3}{8}} \pi^{\frac{11}{8}} |f_0|^{\frac{11}{4}} |d|^{-1}, \\
C(d,f_0)& =|f_0|(1+2\pi^2 f_0^2 |d|^{-1}), \\
\tilde D(d,f_0,\omega) &=(2\pi)^{\frac{3}{4}}|f_0|^{\frac{3}{2}} (|d|+\pi f_0^2 |\omega| +\pi^2 f_0^4)|d|^{-1}. \\
\end{align*}
Hence the constants $\zeta_*, \zeta^*$ from Theorem~\ref{Uniqueness} take the form
\begin{align*}
\zeta_{*}(d,f_0,\omega)&=-C^2(d,f_0) \mathbf{1}_{d<0}- 27\biggl(1+\frac{\pi f_0^2 |\omega|}{|d|}+\frac{\pi^2 f_0^4}{|d|}\biggr)  C(d,f_0)^6, \\
\zeta^*(d,f_0,\omega)&=  3C(d,f_0)^2+\frac{\omega^2}{4|d|}.
\end{align*}
Finally, the conditions (i), (ii), (iii) from the uniqueness result of Theorem~\ref{Uniqueness} translate into the conditions (i), (ii), (iii) from Corollary~\ref{Korollar}. 
\end{proof}

Now we turn to continuation from a non-trivial solution. Theorem~\ref{Fortsetzung_nichttrivial} will follow from the Crandall-Rabinowitz Theorem of bifurcation from a simple eigenvalue, which we recall next. 

\begin{Theorem}[Crandall-Rabinowitz \cite{CrRab_bifurcation},\cite{kielhoefer2011bifurcation}] \label{Thm Crandall-Rabinowitz}
    Let $I\subset\R$ be an open interval, $X$,$Y$ Banach spaces and let $F:I\times X\to Y$ be twice continuously differentiable such
    that $F(\lambda,0)=0$ for all $\lambda\in I$ and $\partial_x F(\lambda_0,0):X\to Y$ is an index-zero Fredholm
    operator for $\lambda_0\in I$. Moreover assume:
    \begin{itemize}
      \item[(H1)] there is $\phi\in X,\phi\neq 0$ such that $\ker\partial_x F(\lambda_0,0)=\spann\{\phi\}$,
      \item[(H2)] $\partial^2_{x,\lambda} F(\lambda_0,0)[\phi] \not \in \range \partial_xF(\lambda_0,0)$.
    \end{itemize}
    Then there exists $\epsilon>0$ and a continuously differentiable curve $(\lambda,x): (-\epsilon,\epsilon)\to I\times X$ with $\lambda(0)=\lambda_0$, $x(0)=0$, $\dot x(0)=\phi$ and $x(t)\neq 0$ for $0<|t|<\epsilon$ and $F(\lambda(t),x(t))=0$ for all $t\in (-\epsilon,\epsilon)$. Moreover, there exists a neighborhood $J\times U\subset I\times X$ of $(\lambda_0,0)$ such that all non-trivial solutions in $J\times U$ of $F(\lambda,x)=0$ lie on the curve. Finally,
    $$
   \dot \lambda(0)= -\frac{1}{2} \frac{\langle \partial^2_{xx} F(\lambda_0,0)[\phi,\phi],\phi^*\rangle}{\langle \partial^2_{x,\lambda} F(\lambda_0,0)[\phi],\phi^*\rangle},
    $$
    where $\spann\{\phi^*\}=\ker \partial_x F(\lambda_0,0)^*$ and $\langle\cdot,\cdot\rangle$ is the duality pairing between $Y$ and its dual $Y^*$.  
    \end{Theorem}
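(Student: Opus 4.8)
The plan is to prove the theorem by a Lyapunov--Schmidt reduction combined with a scaling that \emph{desingularizes} the problem at the bifurcation point, where $L_0:=\partial_x F(\lambda_0,0)$ fails to be invertible. Since $L_0$ is an index-zero Fredholm operator with $\ker L_0=\spann\{\phi\}$ by (H1), I first fix topological complements $X=\spann\{\phi\}\oplus X_2$ and $Y=\range L_0\oplus Y_2$, where $\dim Y_2=1$ because the index is zero. Let $Q\colon Y\to Y_2$ be the continuous projection with $\ker Q=\range L_0$, and pick $\phi^*$ with $\spann\{\phi^*\}=\ker L_0^*=(\range L_0)^{\perp}$, so that for $y\in Y$ one has $Qy=0\iff\langle y,\phi^*\rangle=0$. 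The decisive idea is to exploit $F(\lambda,0)=0$ by writing, for the ansatz $x=t(\phi+w)$ with $w\in X_2$,
\[
F(\lambda,t(\phi+w))=t\int_0^1\partial_x F(\lambda,st(\phi+w))[\phi+w]\,ds=:t\,G(\lambda,t,w).
\]
Because $F\in C^2$, the map $G\colon I\times\R\times X_2\to Y$ is $C^1$ \emph{across} $t=0$, with $G(\lambda,0,w)=\partial_x F(\lambda,0)[\phi+w]$ and in particular $G(\lambda_0,0,0)=L_0\phi=0$. Thus solving $F=0$ near $(\lambda_0,0)$ along the ansatz is equivalent to solving the regular equation $G(\lambda,t,w)=0$ near $(\lambda_0,0,0)$, and the degeneracy at the bifurcation point has been removed.

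Next I split $G=0$ into its range and cokernel parts. First I solve the range equation $(\mathrm{Id}-Q)G(\lambda,t,w)=0$ for $w$ by the implicit function theorem: at $(\lambda_0,0,0)$ the $w$-derivative is $(\mathrm{Id}-Q)L_0|_{X_2}=L_0|_{X_2}\colon X_2\to\range L_0$, which is an isomorphism (injective since $X_2\cap\ker L_0=\{0\}$, surjective by construction). This yields a $C^1$ map $w=w(\lambda,t)$ with $w(\lambda_0,0)=0$. Substituting back, the problem reduces to the scalar \emph{bifurcation equation}
\[
\Psi(\lambda,t):=\langle G(\lambda,t,w(\lambda,t)),\phi^*\rangle=0,\qquad \Psi(\lambda_0,0)=0.
\]
To solve $\lambda=\lambda(t)$ by the implicit function theorem I need $\partial_\lambda\Psi(\lambda_0,0)\neq0$. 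Since $\partial_w G(\lambda_0,0,0)=L_0$ maps into $\range L_0$, the chain-rule contribution of $\partial_\lambda w$ is annihilated by $\phi^*$, leaving $\partial_\lambda\Psi(\lambda_0,0)=\langle\partial^2_{x,\lambda}F(\lambda_0,0)[\phi],\phi^*\rangle$, which is nonzero precisely by the transversality hypothesis (H2). This gives a $C^1$ function $\lambda(t)$ with $\lambda(0)=\lambda_0$.

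I then assemble the branch as $x(t)=t(\phi+w(\lambda(t),t))$ and $\lambda=\lambda(t)$. For $t\neq0$, $G=0$ forces $F(\lambda(t),x(t))=tG=0$, and $F(\lambda(0),0)=0$; moreover $\dot x(0)=\phi+w(\lambda_0,0)=\phi$ since $w(\lambda_0,0)=0$, and $x(t)\neq0$ for small $t\neq0$ because $\phi+w$ stays close to $\phi\neq0$. The local uniqueness clause follows from the uniqueness parts of both implicit function theorem applications: writing an arbitrary small nontrivial solution as $x=t\phi+v$ with $v\in X_2$ and $t\neq0$, dividing the equation by $t$ recovers $G(\lambda,t,v/t)=0$, whence $v/t=w(\lambda,t)$ and $\lambda=\lambda(t)$ by uniqueness, so the solution lies on the constructed curve. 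Finally, the formula for $\dot\lambda(0)$ comes from $\dot\lambda(0)=-\partial_t\Psi(\lambda_0,0)/\partial_\lambda\Psi(\lambda_0,0)$: differentiating the integral representation gives $\partial_t G(\lambda_0,0,0)=\int_0^1 s\,ds\,\partial^2_{xx}F(\lambda_0,0)[\phi,\phi]=\tfrac12\partial^2_{xx}F(\lambda_0,0)[\phi,\phi]$, and the $\partial_t w$-term again lies in $\range L_0$, so the numerator equals $\tfrac12\langle\partial^2_{xx}F(\lambda_0,0)[\phi,\phi],\phi^*\rangle$, yielding exactly the stated expression. The main obstacle I anticipate is the bookkeeping at $t=0$: verifying that $G$ is genuinely $C^1$ there (so that $F\in C^2$ is the minimal regularity needed) and handling the local-uniqueness argument carefully, since the substitution $x=t(\phi+w)$ must be shown to capture \emph{every} nearby nontrivial solution.
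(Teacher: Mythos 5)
The paper does not actually prove this theorem: it is quoted from the literature (the original Crandall--Rabinowitz paper and Kielh\"ofer's book), so your proposal can only be measured against the standard proof found there --- which it follows closely. The desingularizing ansatz $x=t(\phi+w)$ with $G(\lambda,t,w)=\int_0^1\partial_x F\bigl(\lambda,st(\phi+w)\bigr)[\phi+w]\,ds$ is exactly the classical device; your two-step Lyapunov--Schmidt split (range equation solved for $w(\lambda,t)$, then the scalar bifurcation equation $\Psi(\lambda,t)=0$ solved for $\lambda(t)$ using (H2)) is an equivalent variant of Crandall--Rabinowitz's one-step application of the implicit function theorem jointly in $(\lambda,w)$, where the derivative $(\mu,\psi)\mapsto \mu\,\partial^2_{x,\lambda}F(\lambda_0,0)[\phi]+L_0\psi$ is an isomorphism of $\R\times X_2$ onto $Y$. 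Your verification that $G$ is $C^1$ across $t=0$, the identification of (H2) with $\langle\partial^2_{x,\lambda}F(\lambda_0,0)[\phi],\phi^*\rangle\neq 0$, and the computation of $\dot\lambda(0)$ --- including the factor $\tfrac12$ from $\int_0^1 s\,ds$ and the annihilation of the $\partial_\lambda w$- and $\partial_t w$-terms by $\phi^*$ --- are all correct.

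The genuine gap is the local-uniqueness clause, precisely where you flagged an anticipated obstacle without resolving it. You write an arbitrary small nontrivial solution as $x=t\phi+v$, $v\in X_2$, \emph{with} $t\neq 0$, and then invoke the IFT uniqueness for $w(\lambda,t)=v/t$. Neither step is justified as stated: a priori a small nontrivial solution could have $t=0$ (i.e.\ $x\in X_2\setminus\{0\}$), and even when $t\neq 0$ the IFT uniqueness for the range equation applies only if $v/t$ lies in the small ball in $X_2$ on which $w(\lambda,t)$ is the unique solution, which fails if $\|v\|$ is comparable to or larger than $|t|$. The missing ingredient is the standard a priori estimate forcing the $X_2$-component to be of higher order: apply $(\mathrm{Id}-Q)$ to $0=F(\lambda,x)=\partial_xF(\lambda_0,0)x+\bigl(\partial_xF(\lambda,0)-\partial_xF(\lambda_0,0)\bigr)x+O(\|x\|^2)$ and use that $L_0|_{X_2}\colon X_2\to\range L_0$ has a bounded inverse (closed range, Fredholm) to obtain $\|v\|\leq C\bigl(|\lambda-\lambda_0|+\|x\|\bigr)\|x\|=o(\|x\|)$ as $(\lambda,x)\to(\lambda_0,0)$ through solutions. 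This forces $t\neq 0$ for every nearby nontrivial solution and gives $\|v/t\|\to 0$, after which your division argument goes through. Without this estimate the uniqueness assertion of the theorem --- which the paper genuinely uses, e.g.\ in Step 3 of the proof of Theorem~\ref{Fortsetzung_nichttrivial} to conclude $u_0^+\neq u_{\sigma_0}$ --- remains unproved; alternatively, the unscaled Lyapunov--Schmidt reduction $x=t\phi+v$, $v=V(\lambda,t)$, followed by factoring the bifurcation function as $\Phi(\lambda,t)=t\Psi(\lambda,t)$, yields uniqueness without this extra estimate.
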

    
Next we provide the functional analytic setup. Fix the values of $d, \omega, \zeta, f_0$ and the function $e$. If $u_0\in H^2_\text{per}(0,2\pi)$ is the non-trivial non-degenerate solution of \eqref{TWE} for $f_1=0$ (as assumed in Theorem~\ref{Fortsetzung_nichttrivial}) then for $\sigma\in\R$ we denote by $u_\sigma(s)\coloneqq u_0(s-\sigma)$ its shifted copy, which is also a solution of \eqref{TWE} for $f_1=0$. Consider the mapping
$$
G:\left\{\begin{array}{rcl}
\R \times H^2_{\text{per}(0,2\pi)} & \to & L^2(0,2\pi), \vspace{\jot} \\
(f_1,u) & \mapsto & -d u''+\mathrm{i} \omega u'+(\zeta-\mathrm{i})u-|u|^2 u+\mathrm{i}f_0+\mathrm{i}f_1 e(s).
\end{array}
\right.
$$
Then $G$ is twice continuously differentiable. The linearized operator $\partial_{(f_1,u)} G(0,u_\sigma)=(\iu e,L_{u_\sigma})$ with $L_{u_\sigma}$ as in Definition~\ref{non_degenerate_solution} is a Fredholm operator and $(0,u_\sigma')\in \ker \partial_{(f_1,u)} G(0,u_\sigma)$. As we shall see there may be more elements in the kernel. Next we fix the value $\sigma_0$ (its precise value will be given later) and let $H^2_\text{per}(0,2\pi) = \spann\{u_{\sigma_0}'\}\oplus Z$ where, e.g.,  
$$
Z \coloneqq H^2_\text{per}(0,2\pi) \cap\spann\{u_{\sigma_0}'\}^{\perp_{L^2}} =\biggl\{\varphi-\frac{\langle \varphi, u_{\sigma_0}'\rangle_{L^2}}{\langle u_{\sigma_0}', u_{\sigma_0}'\rangle_{L^2}} u_{\sigma_0}': \varphi\in H^2_\text{per}(0,2\pi)\biggr\}.
$$
It will be more convenient to rewrite $u= u_\sigma+v$ with $v\in Z$. In order to justify this, note also that the map $(\sigma,v)\mapsto u_{\sigma}+v$ defines a diffeomorphism of a neighborhood of $(\sigma_0,0) \in \R \times Z$ onto a neighborhood of $u_{\sigma_0} \in H^2_{\text{per}}(0,2\pi)$ since the derivative at $(\sigma_0,0)$ is given by $(\lambda,\psi) \mapsto -\lambda u_{\sigma_0}'+\psi$ which is an isomorphism from $\R \times Z$ onto $H^2_{\text{per}}(0,2\pi)$. Now we define 
$$
F:\left\{\begin{array}{rcl}
\R\times \R\times Z \ & \to & L^2(0,2\pi), \vspace{\jot} \\
(\sigma,f_1,v) & \mapsto & G(f_1,u_\sigma+v)
\end{array}
\right.
$$
which is also twice continuously differentiable and where $\partial_{(f_1,v)} F(\sigma_0,0,0)$ is an index-zero Fredholm operator. Our goal will be to solve 
\begin{equation}
\label{bifurcation_s}
F(\sigma,f_1,v)=0
\end{equation}
by means of bifurcation theory, where $\sigma\in\R$ is the bifurcation parameter. Notice that $F(\sigma,0,0)=0$ for all $\sigma\in \R$, i.e., $(f_1,v)=(0,0)$ is a trivial solution of \eqref{bifurcation_s}.

\medskip

Next we show (H1) of Theorem~\ref{Thm Crandall-Rabinowitz}.

\begin{Lemma} Suppose that $\sigma_0 \in\R$ satisfies \eqref{eq:sigma_0}, i.e. $\IT \int_0^{2\pi}e(s+\sigma_0)\overline{\phi_0^\ast(s)}\,ds =0$. Then  $\dim\ker\partial_{(f_1,v)} F(\sigma_0,0,0)=1$ and $\range\partial_{(f_1,v)} F(\sigma_0,0,0) = \spann\{\phi_{\sigma_0}^*\}^{\perp_{L^2}}$. 
\label{one_d_kernel}
\end{Lemma}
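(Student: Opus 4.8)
The plan is to write the linearization $\partial_{(f_1,v)}F(\sigma_0,0,0)$ explicitly and then read off its kernel and range from the Fredholm structure of $L_{u_{\sigma_0}}$, the whole argument hinging on a change of variables that turns the abstract Fredholm solvability condition into the hypothesis \eqref{eq:sigma_0}.

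First I would compute the derivative. Since the forcing enters $G$ linearly through $\iu f_1 e(s)$ and the nonlinearity is $-|u|^2u$, one gets for $(\lambda,\psi)\in\R\times Z$
\begin{equation*}
\partial_{(f_1,v)}F(\sigma_0,0,0)[\lambda,\psi]=\lambda\,\iu e + L_{u_{\sigma_0}}\psi,
\end{equation*}
where $\partial_{f_1}F(\sigma_0,0,0)=\iu e$ (note that $e$ is \emph{not} shifted in $G$, so this term is unshifted) and $\partial_vF(\sigma_0,0,0)=L_{u_{\sigma_0}}\vert_Z$ with $L_{u_{\sigma_0}}$ as in Definition~\ref{non_degenerate_solution}. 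By Remark~\ref{fredholm_etc} and non-degeneracy, $L_{u_{\sigma_0}}$ is an index-zero Fredholm operator with $\ker L_{u_{\sigma_0}}=\spann\{u_{\sigma_0}'\}$; hence $\ker L_{u_{\sigma_0}}^*$ is one-dimensional, spanned by the $\phi_{\sigma_0}^*$ introduced before the theorem, and $\range L_{u_{\sigma_0}}=\spann\{\phi_{\sigma_0}^*\}^{\perp_{L^2}}$.

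The decisive step is to show that \eqref{eq:sigma_0} is equivalent to $\iu e\in\range L_{u_{\sigma_0}}$, i.e.\ to $\langle \iu e,\phi_{\sigma_0}^*\rangle_2=0$. Using $\phi_{\sigma_0}^*(s)=\phi_0^*(s-\sigma_0)$, the substitution $s\mapsto s+\sigma_0$ (legitimate since the integrand is $2\pi$-periodic), and the elementary identity $\RT(\iu z)=-\IT z$, I would compute
\begin{equation*}
\langle \iu e,\phi_{\sigma_0}^*\rangle_2 = \RT\Bigl(\iu\int_0^{2\pi}e(s)\overline{\phi_0^*(s-\sigma_0)}\,ds\Bigr) = -\IT\int_0^{2\pi}e(s+\sigma_0)\overline{\phi_0^*(s)}\,ds,
\end{equation*}
which vanishes precisely under \eqref{eq:sigma_0}. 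This bookkeeping with the shift $\sigma_0$ --- matching the unshifted $e$ coming from $\partial_{f_1}F$ against the shifted $\phi_{\sigma_0}^*$ --- is the only genuinely delicate point; everything else is soft Fredholm theory.

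With $\iu e\in\range L_{u_{\sigma_0}}$ in hand, the range assertion is immediate: since $Z$ is a complement of $\ker L_{u_{\sigma_0}}$ we have $L_{u_{\sigma_0}}(Z)=\range L_{u_{\sigma_0}}$, whence $\range\partial_{(f_1,v)}F(\sigma_0,0,0)=\range L_{u_{\sigma_0}}+\spann\{\iu e\}=\range L_{u_{\sigma_0}}=\spann\{\phi_{\sigma_0}^*\}^{\perp_{L^2}}$. For the kernel I would solve $L_{u_{\sigma_0}}\psi=-\lambda\,\iu e$ with $\psi\in Z$: for $\lambda=0$ the solution set is $\ker L_{u_{\sigma_0}}\cap Z=\{0\}$ (as $u_{\sigma_0}'\neq0$ for the non-trivial $u_0$), while for each $\lambda\neq0$ solvability holds by the range identity and the $Z$-solution is unique because $Z$ meets the kernel trivially. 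Hence the kernel is the line $\spann\{(1,\psi_0)\}$, where $\psi_0\in Z$ is the unique solution of $L_{u_{\sigma_0}}\psi_0=-\iu e$; after the shift $s\mapsto s+\sigma_0$ this $\psi_0$ is exactly the function $\xi_{\sigma_0}$ of the statement, solving $L_{u_0}\xi_{\sigma_0}=-\iu e(\cdot+\sigma_0)$ with $\xi_{\sigma_0}\perp_{L^2}u_0'$, which fixes the normalization $\dot f_1(0)=1$ used later. In particular $\dim\ker\partial_{(f_1,v)}F(\sigma_0,0,0)=1$, completing the proof.
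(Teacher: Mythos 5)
Your proposal is correct and follows essentially the same route as the paper: identify the linearization as $(\lambda,\psi)\mapsto L_{u_{\sigma_0}}\psi+\iu\lambda e$, use the Fredholm alternative for $L_{u_{\sigma_0}}$ together with the shift identity $\phi_{\sigma_0}^*(s)=\phi_0^*(s-\sigma_0)$ to recognize \eqref{eq:sigma_0} as the solvability condition for $L_{u_{\sigma_0}}\psi=-\iu e$, and conclude that the kernel is $\spann\{(1,\psi_{\sigma_0})\}$ and the range is $\spann\{\phi_{\sigma_0}^*\}^{\perp_{L^2}}$. The only cosmetic difference is that the paper transfers the equation to $L_{u_0}\xi_{\sigma_0}=-\iu e(\cdot+\sigma_0)$ and tests against $\phi_0^*$, while you test $\iu e$ directly against $\phi_{\sigma_0}^*$ and substitute; these are the same computation.
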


\begin{proof} The fact that $\partial_{(f_1,v)} F(\sigma_0,0,0)$ is a Fredholm operator follows from Remark~\ref{fredholm_etc}. For $(\alpha,\psi_{\sigma_0})\in \R\times Z$ being non-trivial and belonging to the kernel of $\partial_{(f_1,v)} F(\sigma_0,0,0)$ we have
\begin{equation} \label{eq:one_d_kernel}
\partial_{(f_1,v)} F(\sigma_0,0,0)[\alpha,\psi_{\sigma_0}]= L_{u_{\sigma_0}}\psi_{\sigma_0}+\mathrm{i}\alpha e=0.
\end{equation}
If $\alpha=0$ then by non-degeneracy we find $\psi_{\sigma_0}\in \spann\{u_{\sigma_0}'\}\cap Z=\{0\}$, which is impossible. Hence we may assume w.l.o.g. that $\alpha=1$ and $\psi_{\sigma_0}$ has to solve 
\begin{equation} \label{def_psi}
L_{u_{\sigma_0}}\psi_{\sigma_0} = -\mathrm{i}e
\end{equation}
which, by setting $\psi_{\sigma_0}(s)=\xi_{\sigma_0}(s-\sigma_0)$, is equivalent to
\begin{equation} \label{def_xi}
L_{u_0}\xi_{\sigma_0} = -\mathrm{i}e(\cdot+\sigma_0).
\end{equation}
By the Fredholm alternative this is possible if and only if $-\iu e(\cdot+\sigma_0)\perp_{L^2} \phi_0^*$. If this $L^2$-ortho\-gonality holds then there exists $\psi_{\sigma_0}\in H^2_\text{per}(0,2\pi)$ solving \eqref{def_psi} and $\psi_{\sigma_0}$ is unique up to adding a multiple of $u_{\sigma_0}'$. Hence there is a unique $\psi_{\sigma_0}\in Z$ solving \eqref{def_psi}. The $L^2$-orthogonality means 
\begin{equation*}
0=-\RT\int_0^{2\pi}\iu e(s+\sigma_0) \overline{\phi_0^*(s)}\,ds = \IT \int_0^{2\pi} e(s+\sigma_0) \overline{\phi_0^*(s)}\,ds
\end{equation*}
which amounts to \eqref{eq:sigma_0}. Finally, it remains to determine the range of $\partial_{(f_1,v)} F(\sigma_0,0,0)$. Let $\tilde\phi\in L^2(0,2\pi)$ be such that $\tilde\phi = \partial_{(f_1,v)} F(\sigma_0,0,0)[\alpha,\tilde\psi]$ with $\tilde\psi\in Z$ and $\alpha \in \R$. Thus 
\begin{equation} \label{eq:range}
L_{u_{\sigma_0}}\tilde\psi+\mathrm{i}\alpha e=\tilde\phi
\end{equation}
and since $\mathrm{i}e\perp_{L^2} \phi_{\sigma_0}^*$ by the definition of $\sigma_0$, the Fredholm alternative says that a necessary and sufficient condition for $\tilde\phi$ to satisfy \eqref{eq:range} is that $\tilde\phi\in \spann\{\phi_{\sigma_0}^*\}^{\perp_{L^2}}$ as claimed. Note that in this case $\tilde\psi\in H^2_\text{per}(0,2\pi)=\ker L_{u_{\sigma_0}}\oplus Z$ and hence, for every given $\alpha\in \R$ and $\tilde\phi\in\spann\{\phi_{\sigma_0}^*\}^{\perp_{L^2}}$ there is a unique element $\tilde\psi\in Z$ that solves \eqref{eq:range}.
\end{proof}

\begin{proof}[Proof of Theorem~\ref{Fortsetzung_nichttrivial}.] The proof is divided into three steps.

\medskip

\noindent
Step 1. We begin by verifying for \eqref{bifurcation_s} the conditions for the local bifurcation theorem of Crandall-Rabinowitz, cf. Theorem~\ref{Thm Crandall-Rabinowitz}. By Lemma~\ref{one_d_kernel}, $\partial_{(f_1,v)} F(\sigma_0,0,0): \R\times Z\to L^2(0,2\pi)$ is an index-zero Fredholm operator and it satisfies 
$$
\ker\partial_{(f_1,v)} F(\sigma_0,0,0)=\spann\{(1,\psi_{\sigma_0})\},
$$
where $\psi_{\sigma_0}$ denotes the unique element of Z which solves \eqref{def_psi}. Hence (H1) is satisfied. To see (H2) note that 
$$
\partial^2_{(f_1,v),\sigma} F(\sigma_0,0,0)[1,\psi_{\sigma_0}]=2u_{\sigma_0}'\overline{u_{\sigma_0}}\psi_{\sigma_0}+2\overline{u_{\sigma_0}'} u_{\sigma_0} \psi_{\sigma_0}+2u_{\sigma_0}u_{\sigma_0}'\overline{\psi_{\sigma_0}}.
$$
On the other hand, differentiation of \eqref{def_psi} w.r.t. $s$ yields
\begin{equation} \label{imageofpsi'}
L_{u_{\sigma_0}}\psi'_{\sigma_0} = 2u_{\sigma_0}'\overline{u_{\sigma_0}}\psi_{\sigma_0}+2\overline{u_{\sigma_0}'} u_{\sigma_0} \psi_{\sigma_0} +2u_{\sigma_0}u_{\sigma_0}'\overline{\psi_{\sigma_0}}-\iu e'
\end{equation}
so that 
\begin{equation} \label{transv_imageofpsi'}
\partial^2_{(f_1,v),\sigma} F(\sigma_0,0,0)[1,\psi_{\sigma_0}] = L_{u_{\sigma_0}}\psi_{\sigma_0}'+\iu e'.
\end{equation}
Hence the characterization of $\range\partial_{(f_1,v)} F(\sigma_0,0,0)$ from Lemma~\ref{one_d_kernel} implies that the trans\-versality condition (H2) is satisfied if and only if $\RT\int_0^{2\pi} \iu e'(s)\overline{\phi_{\sigma_0}^*(s)}\,ds \not=0$ which amounts to assumption \eqref{eq:transv}. This already allows us to apply Theorem~\ref{Thm Crandall-Rabinowitz} and we obtain the existence of a local curve $t \mapsto (\sigma(t), f_1(t),v(t))$, $\dot{f_1}(0)=1$, $f_1(0)=0$, $v(0)=0$, $\sigma(0)=\sigma_0$ with $F(\sigma(t), f_1(t),v(t))=0$. Assertion (i) is then satisfied with $u(t) \coloneqq u_{\sigma(t)}+v(t)$. Assertion (ii) follows like in the proof of Theorem~\ref{Hauptresultat}.

\medskip

\noindent
Step 2. From here on let us additionally assume that zero is an algebraically simple eigenvalue of $L_{u_0}$, i.e. $u_0' \notin \range L_{u_0}$. Next we want to show that $L_{u(t)}$ is invertible for $0<|t|<\delta^*$ and $\delta^*$ sufficiently small, i.e. that the critical zero eigenvalue of $L_{u(0)}=L_{u_{\sigma_0}}$ moves away from zero when $t$ evolves. Let us define
$$
H:\left\{\begin{array}{rcl}
H^2_{\text{per}}(0,2\pi) \times Z \times \R \ & \to & L^2(0,2\pi), \vspace{\jot} \\
(u,v,\mu) & \mapsto & L_u (u_{\sigma_0}'+v) -\mu (u_{\sigma_0}'+v).
\end{array}
\right.
$$
Then $H(u_{\sigma_0},0,0)=0$ and 
$$
\partial_{(v,\mu)} H (u_{\sigma_0},0,0):\left\{\begin{array}{rcl}
Z \times \R \ & \to & L^2(0,2\pi), \vspace{\jot} \\
(\psi,\alpha) & \mapsto & L_{u_{\sigma_0}} \psi-\alpha u_{\sigma_0}'
\end{array}
\right.
$$
clearly defines an isomorphism due to our assumption that $u_{\sigma_0}' \notin \range L_{u_{\sigma_0}}$. By the implicit function theorem we find neighborhoods $U \subset H^2_{\text{per}}(0,2\pi)$ of $u_{\sigma_0}$, $V\subset Z$ of $0$, $J \subset \R$ of $0$ and continuously differentiable functions $v^*:U \to V$, $\mu^*:U \to J$ such that $v^*(u_{\sigma_0})=0$, $\mu^*(u_{\sigma_0})=0$ and
$$
\forall (u,v,\mu)\in U\times V\times J: \, H(u,v,\mu)=0 \Leftrightarrow v=v^*(u), \mu=\mu^*(u).
$$ 
Thus, for $|t|$ sufficiently small we find $L_{u(t)}\bigl(u_{\sigma_0}'+v^*(u(t))\bigr)=\mu^*(u(t))\bigl(u_{\sigma_0}'+v^*(u(t))\bigr)$. With $\varphi(t) \coloneqq u_{\sigma_0}'+v^*(u(t))$ and $\mu(t)\coloneqq \mu^*(u(t))$ we have $\varphi(0)=u_{\sigma_0}'$, $\mu(0)=0$ and 
\begin{equation} \label{eigenvalue}
L_{u(t)} \varphi(t)=\mu(t)\varphi(t)
\end{equation} 
so that we have found a parameterization of the eigenvalue $\mu(t)$ nearby $0$ with eigenfunction $\varphi(t)$ of $L_{u(t)}$. Next we want to compute $\dot\mu(0)$ and show that $\dot\mu(0)\neq 0$ so that the critical zero eigenvalue moves away from zero. Differentiating \eqref{eigenvalue} w.r.t. $t$ and evaluating at $t=0$ we get
\begin{equation*}
L_{u_{\sigma_0}} \dot \varphi(0)-2 \dot u(0) \overline{u_{\sigma_0}} u_{\sigma_0}'  -2 u_{\sigma_0}\overline{\dot u(0)} u_{\sigma_0}'-2 u_{\sigma_0}\dot u(0) \overline{u_{\sigma_0}'}=\dot\mu(0) u_{\sigma_0}'.
\end{equation*}
Theorem~\ref{Thm Crandall-Rabinowitz} yields $\dot v(0)=\psi_{\sigma_0}$ from which we find $\dot u(0)=-u_{\sigma_0}' \dot \sigma(0)+\psi_{\sigma_0}$. Thus, 
\begin{equation*}
L_{u_{\sigma_0}} \dot \varphi(0)-2(\psi_{\sigma_0}\overline{u_{\sigma_0}} u_{\sigma_0}'+ u_{\sigma_0}\overline{\psi_{\sigma_0}} u_{\sigma_0}'+ u_{\sigma_0}\psi_{\sigma_0}\overline{u_{\sigma_0}'})+2 \dot \sigma(0) u_{\sigma_0}'(\overline{u_{\sigma_0}} u_{\sigma_0}'+2u_{\sigma_0}\overline{u_{\sigma_0}'})=\dot\mu(0) u_{\sigma_0}'.
\end{equation*}
Using \eqref{imageofpsi'} this gives 
\begin{equation*}
L_{u_{\sigma_0}} \dot \varphi(0)-L_{u_{\sigma_0}} \psi_{\sigma_0}'-\iu e'+2\dot \sigma(0) u_{\sigma_0}'(\overline{u_{\sigma_0}} u_{\sigma_0}'+2u_{\sigma_0}\overline{u_{\sigma_0}'})=\dot\mu(0) u_{\sigma_0}'.
\end{equation*}
Testing this equation with $\phi_{\sigma_0}^*$ and using $\dot \mu(0) \in \R$ we obtain
\begin{equation*}
\RT \int_0^{2\pi} -\iu e' \overline{\phi_{\sigma_0}^*}+2 \dot \sigma(0) u_{\sigma_0}'(\overline{u_{\sigma_0}} u_{\sigma_0}'+2u_{\sigma_0}\overline{u_{\sigma_0}'})\overline{\phi_{\sigma_0}^*} \, ds=\dot\mu(0) \RT \int_0^{2\pi} u_{\sigma_0}'\overline{\phi_{\sigma_0}^*} \, ds.
\end{equation*}
Due to $u_{\sigma_0}' \notin \range L_{u_{\sigma_0}}$ we have $\RT \int_0^{2\pi} u_{\sigma_0}'\overline{\phi_{\sigma_0}^*} \, ds \neq 0$ so that
\begin{equation*}
\dot\mu(0)=\frac{\IT \int_0^{2\pi} e'(s+\sigma_0) \overline{\phi_0^*(s)} \, ds+ 2 \dot \sigma(0) \RT \int_0^{2\pi} u_0'(\overline{u_0} u_0'+2u_0\overline{u_0'})\overline{\phi_0^*} \, ds}{\RT \int_0^{2\pi} u_0'\overline{\phi_0^*} \, ds}.
\end{equation*}
From Theorem~\ref{Thm Crandall-Rabinowitz} we know that
$$
\dot\sigma(0) = -\frac{1}{2} \frac{\bigl\langle \partial^2_{(f_1,v)^2} F(\sigma_0,0,0)[(1,\psi_{\sigma_0}), (1,\psi_{\sigma_0})], \phi_{\sigma_0}^*\bigr\rangle_{L^2}}{\bigl\langle \partial^2_{(f_1,v),\sigma} F(\sigma_0,0,0)[1,\psi_{\sigma_0}], \phi_{\sigma_0}^*\bigr\rangle_{L^2}}.
$$
Therefore, using \eqref{transv_imageofpsi'} and 
$$
 \partial^2_{(f_1,v)^2} F(\sigma_0,0,0)[(1,\psi_{\sigma_0}), (1,\psi_{\sigma_0})]=-2\overline{u_{\sigma_0}} \psi_{\sigma_0}^2-4 u_{\sigma_0} |\psi_{\sigma_0}|^2
$$
we find that the condition $\dot \mu(0)\neq 0$ amounts to assumption \eqref{further_cond} of the theorem.

Finally, employing some arguments from spectral theory, we ensure that no other eigenvalue runs into zero. For $u=u_1+\iu u_2\in H^2_{\text{per}}(0,2\pi)$ let us define the $\C$-linear operator 
$$
L^{\C}_u:\left\{\begin{array}{rcl}
H^2_{\text{per}}((0,2\pi),\C^2) \ & \to & L^2((0,2\pi),\C^2), \vspace{\jot} \\
 \begin{pmatrix} \varphi_1 \\ \varphi_2 \end{pmatrix}  & \mapsto &\begin{pmatrix} -d\varphi_1''-\omega \varphi_2'+\zeta \varphi_1+\varphi_2-3u_1^2 \varphi_1-u_2^2 \varphi_1-2u_1u_2\varphi_2 \\ -d \varphi_2''+\omega \varphi_1'+\zeta \varphi_2-\varphi_1-u_1^2\varphi_2-3u_2^2\varphi_2-2u_1u_2\varphi_1 \end{pmatrix}
\end{array}
\right.
$$
which is constructed in such a way that 
$$
L^{\C}_u  \begin{pmatrix} \varphi_1 \\ \varphi_2 \end{pmatrix}= \begin{pmatrix} \RT L_u(\varphi_1+\iu \varphi_2)  \\ \IT L_u(\varphi_1+\iu \varphi_2)  \end{pmatrix}
$$  
whenever $\varphi_1,\varphi_2\in H^2_{\text{per}}((0,2\pi),\R)$. Since $L^{\C}_u$ is an index-zero Fredholm operator, its spectrum consists of eigenvalues. The real part of these eigenvalues (weighted with $\sign(d)$) is bounded from below by $c\in \R$ which is chosen such that
$$
\RT \biggl\langle \sign(d) L_u^\C  \begin{pmatrix} \varphi_1 \\ \varphi_2 \end{pmatrix},\begin{pmatrix} \varphi_1 \\ \varphi_2 \end{pmatrix} \biggr\rangle_{L^2((0,2\pi),\C^2)}  \geq c \left\|\begin{pmatrix} \varphi_1 \\ \varphi_2 \end{pmatrix} \right\|_{L^2((0,2\pi),\C^2)}^2
$$ 
holds. This implies that the resolvent set $\rho(L^{\C}_u)$ is non-empty and the compact embedding $H^2_{\text{per}}((0,2\pi),\C^2) \hookrightarrow L^2((0,2\pi),\C^2)$ ensures that $L^{\C}_u$ has compact resolvent so that $\sigma(L^{\C}_u)$ consists of isolated eigenvalues. Now choose $\varepsilon>0$ such that $\sigma(L_{u(0)}^\C) \cap \overline{B_{\varepsilon}^\C(0)}=\{0\}$. Using \cite[Chapter Four, Theorem 3.18]{Kato:101545} we find that $\sigma(L^{\C}_{u(t)}) \cap B_{\varepsilon}^\C(0)$ exactly consists of one algebraically simple eigenvalue if $|t|$ is sufficiently small. If in addition $|t|$ is chosen so small that $\mu(t) \in (-\varepsilon,\varepsilon)$ then this means $\sigma(L^{\C}_{u(t)}) \cap B_{\varepsilon}^\C(0)=\{\mu(t)\}$. But from $\dot \mu(0)\neq 0$ we know that $\mu(t)\neq 0$ for small $|t|> 0$ which guarantees that $0\notin\sigma(L_{u(t)}^\C)$ for $0<|t|<\delta^*$ and $\delta^*$ sufficiently small. Finally, $L_{u(t)}$ inherits the invertibility of $L_{u(t)}^\C$.

\medskip

\noindent
Step 3. Using $\dot{f_1}(0)=1$ and Step 2 we find a local reparameterization $(\tilde f_1,u(\tilde f_1))$ of $C(t)=(f_1(t),u(t))$ such that $L_{u(\tilde f_1)}$ is invertible for $0<\tilde f_1<f_1^*$.	Next we construct the connected set $\mathcal{C}^+_*$. For this we want to apply Theorem~\ref{GCT} to the map $T:\R \times H^1_{\text{per}}(0,2\pi) \to H^1_{\text{per}}(0,2\pi)$ from the proof of Theorem~\ref{Hauptresultat}. Note that this theorem can not be applied directly at the point $(0,u_{\sigma_0})$ since $\partial_u T(0,u_{\sigma_0})$ is not invertible. Instead, we apply it to the points $(\tilde f_1,u(\tilde f_1))$ with $\tilde f_1\in (0,f_1^*)$ and obtain that the maximal continuum $\mathcal{C}^+(\tilde f_1)\subset [\tilde f_1,\infty)\times H^1_{\text{per}}(0,2\pi)$ of solutions of \eqref{TWE} with $(\tilde  f_1, u(\tilde f_1)) \in \mathcal{C}^+(\tilde f_1)$ is unbounded or returns to another solution $u^+(\tilde f_1)\neq u(\tilde f_1)$ at $f_1=\tilde f_1$. As in the proof of Theorem~\ref{Hauptresultat} we see that the continuum $\mathcal{C}^+(\tilde f_1)$ persists as a connected and closed set in $[\tilde f_1,\infty)\times H^2_{\text{per}}(0,2\pi)$. Let us define
$$
\mathcal{C}_*^+ \coloneqq \bigcup_{\tilde f_1 \in (0,f_1^*)} \mathcal{C}^+(\tilde f_1)\subset \mathcal{C}^+.
$$
Clearly, $\operatorname{pr}_1(\mathcal{C}^{+}_*)\subset (0,\infty)$ and $\mathcal{C}^+_*$ is connected since $\mathcal{C}^+(\tilde f_1) \subset \mathcal{C}^+(\bar f_1)$ for $\bar f_1<\tilde f_1$. Let us now suppose that $\operatorname{pr}_1(\mathcal{C^+_*})\neq (0,\infty)$ so that $\operatorname{pr}_1(\mathcal{C}^+_*)$ is bounded. By (ii) this implies that $\mathcal{C}^+_*$ is bounded too. Hence $\mathcal{C}^+(\tilde f_1)$ is bounded for $\tilde f_1 \in (0,f_1^*)$ and contains the additional element $(\tilde f_1,u^+(\tilde f_1))$. Let us take $\tilde f_1= \frac{1}{n}$ and consider the two sequences of solutions $(\frac{1}{n}, u(\frac{1}{n}))_{n}$ and $(\frac{1}{n}, u^+(\frac{1}{n}))_{n}$. Using Theorem~\ref{a-priori} we obtain uniform $C^3$-bounds for both sequences $(u(\frac{1}{n}))_{n}$ and $(u^+(\frac{1}{n}))_{n}$. Therefore we can take convergent subsequences (denoted by the same index) and obtain $u(\frac{1}{n})\to u_{\sigma_0}$ and $u^+(\frac{1}{n})\to u_0^+$ in $C^2([0,2\pi])$ as $n\to \infty$. In particular $(0,u_{\sigma_0}), (0,u_0^+)\in \overline{\mathcal{C^+_*}}$ and the uniqueness property from (i) guarantees that $u_0^+\neq u_{\sigma_0}$. This finishes the proof.
\end{proof}

\begin{proof}[Proof of Corollary~\ref{Fortsetzung_nichttrivial_Korollar}.]
We first check assumption \eqref{eq:sigma_0} of Theorem~\ref{Fortsetzung_nichttrivial}. For $e(s)=\eu^{\iu k_1 s}$ we have
\begin{align*}
\IT \int_0^{2\pi} e(s+\sigma_0) \overline{\phi_0^*(s)} \,ds &= \IT\int_0^{2\pi} \eu^{\iu k_1 (s+\sigma_0)} \overline{\phi_0^*(s)} \,ds \\
&= \cos(k_1\sigma_0) \IT \int_0^{2\pi} \eu^{\iu k_1 s} \overline{\phi_0^*(s)} \, ds+\sin(k_1\sigma_0)\RT \int_0^{2\pi} \eu^{\iu k_1 s} \overline{\phi_0^*(s)} \, ds,
\end{align*}
where
\begin{align*}
\IT \int_0^{2\pi} \eu^{\iu k_1 s} \overline{\phi_0^*(s)} \, ds&= \int_0^{2\pi}  \sin(k_1 s) \RT \phi_0^*(s)-\cos(k_1 s)\IT \phi_0^*(s)  \, ds, \\
\RT \int_0^{2\pi} \eu^{\iu k_1 s} \overline{\phi_0^*(s)} \, ds&=\int_0^{2\pi}  \cos(k_1 s) \RT \phi_0^*(s) +\sin(k_1 s) \IT \phi_0^*(s) \, ds.
\end{align*}
Since assumption \eqref{extra} guarantees that $\IT \int_0^{2\pi} \eu^{\iu k_1 s} \overline{\phi_0^*(s)} \, ds$ and $\RT \int_0^{2\pi} \eu^{\iu k_1 s} \overline{\phi_0^*(s)} \, ds$ do not vanish simultaneously condition \eqref{eq:sigma_0_Korollar} ensures that assumption \eqref{eq:sigma_0} of Theorem~\ref{Fortsetzung_nichttrivial} is fulfilled. 

Next we check that assumption \eqref{eq:transv} of Theorem~\ref{Fortsetzung_nichttrivial} holds. For this we compute
\begin{equation} \label{expression}
\IT \int_0^{2\pi}e'(s+\sigma_0)\overline{\phi_0^\ast(s)}\,ds =\IT \int_0^{2\pi}\iu k_1 \eu^{\iu k_1(s+\sigma_0)}\overline{\phi_0^\ast(s)}\,ds = k_1 \RT \int_0^{2\pi}\eu^{\iu k_1(s+\sigma_0)}\overline{\phi_0^\ast(s)}\,ds.
\end{equation}
From \eqref{extra} we know that $\int_0^{2\pi}\eu^{\iu k_1(s+\sigma_0)}\overline{\phi_0^\ast(s)}\,ds = \eu^{\iu k_1\sigma_0} \int_0^{2\pi}\eu^{\iu k_1s}\overline{\phi_0^\ast(s)}\,ds  \neq 0$ and moreover $\IT \int_0^{2\pi}\eu^{\iu k_1(s+\sigma_0)}\overline{\phi_0^\ast(s)}\,ds=0$ by the definition of $\sigma_0$. Therefore the expression in \eqref{expression} does not vanish and so assumption \eqref{eq:transv} of Theorem~\ref{Fortsetzung_nichttrivial} holds. This is all we had to show. 
\end{proof}

\begin{proof}[Proof of Theorem~\ref{second_derivative}] Let us fix all parameters $d, \omega, \zeta, k_1$ and $f_0$ and consider $u: f_1\mapsto u(f_1)$ as a function mapping the parameter $f_1\in [-f_1^\ast,f_1^\ast]$ to the uniquely defined solution of \eqref{TME} in the neighborhood of the trivial solution $u_0$. The existence of such a smooth function follows from the implicit function theorem  applied to the equation $T(f_1,u)=0$, cf. proof of Theorem~\ref{Hauptresultat}. Similarly we consider the functions $v: f_1\mapsto \frac{d u(f_1)}{df_1}$ and $w: f_1\mapsto \frac{d^2 u(f_1)}{df_1^2}$. Then 
\begin{equation} \label{derivatives}
\frac{d}{df_1} \|u(f_1)\|_2^2 = 2\int_0^{2\pi} \RT(u\overline v)\,ds, \qquad \frac{d^2}{df_1^2} \|u(f_1)\|_2^2 = 2\int_0^{2\pi} \RT(u\overline w)+|v|^2\,ds
\end{equation}
and the differential equations for $v, w$ at $f_1=0$ are given by
\begin{eqnarray}
-d v'' + \mathrm{i}\omega v' + (\zeta-\mathrm{i})v - 2|u_0|^2 v-u_0^2\overline{v}+\mathrm{i}\mathrm{e}^{\mathrm{i}k_1 s}&=&0, \label{eq_v}\\
-d w'' + \mathrm{i}\omega w' + (\zeta-\mathrm{i})w - 4u_0|v|^2-2\overline{u_0} v^2-2|u_0|^2 w-u_0^2\overline{w}&=&0 \label{eq_w}
\end{eqnarray}
both equipped with $2\pi$-periodic boundary conditions. The first equation \eqref{eq_v} has a unique solution since the homogeneous equation has a trivial kernel, cf. proof of Theorem~\ref{Hauptresultat}. Thus $v(s)=\alpha \eu^{\iu k_1 s}+\beta \eu^{-\iu k_1s}$ where $\alpha, \beta\in\C$ solve the linear system 
\begin{align*}
(dk_{1}^2-k_{1}\omega+\zeta-\iu-2|u_0|^2)\alpha - u_0^2\overline\beta +\iu &= 0,\\
(dk_{1}^2+k_{1}\omega+\zeta-\iu-2|u_0|^2)\beta - u_0^2\overline\alpha &=0.
\end{align*}
Solving for $\alpha, \beta$ leads to the formulae in the statement of the theorem. Since $v$ is the sum of two $2\pi$-periodic complex exponentials and $u_0$ is a constant we see from \eqref{derivatives} that $\frac{d}{df_1} \|u(f_1)\|_2^2\mid_{f_1=0} =0$. Having determined $v$ we can consider the second equation \eqref{eq_w} as an inhomogeneous equation for $w$. It also has a unique solution since the homogeneous equation is the same as in \eqref{eq_v}. Since the inhomogeneity is of the form $c_1 \eu^{\iu 2k_1s}+c_2 \eu^{-\iu 2k_1 s}+c_3$ the solution has the form $w(s) = \gamma \eu^{\iu 2k_1 s}+\delta \eu^{-\iu 2 k_1s}+\epsilon$. Moreover, for the determination of $\frac{d^2}{df_1^2} \|u(f_1)\|_2^2$ the values of $\gamma, \delta$ are irrelevant and only the value of $\epsilon$ matters. Using 
$$
|v|^2 = |\alpha|^2+|\beta|^2+2\RT(\alpha\overline\beta \eu^{\iu 2 k_1 s}), \quad v^2 = \alpha^2 \eu^{\iu 2k_1 s}+\beta^2 \eu^{-\iu 2k_1 s}+2\alpha\beta
$$
we find from \eqref{eq_w} that the equation determining $\epsilon$ is 
$$
(\zeta-\iu)\epsilon -4u_0(|\alpha|^2+|\beta|^2)-4\overline{u_0} \alpha\beta-2 |u_0|^2\epsilon -u_0^2 \overline\epsilon=0.
$$
Since this is an equation of the form $x\epsilon + y\overline \epsilon=z$ with $x, y, z$ given in the statement of the theorem we find the solution formula $\epsilon=\frac{-\overline z y+z\overline x}{|x|^2-|y|^2}$. Finally, only the constant contributions from $\overline w$ and $|v|^2$ contribute to the integral in the formula \eqref{derivatives} for $ \frac{d^2}{df_1^2} \|u(f_1)\|_2^2$ and lead to the claimed statement of the theorem.
\end{proof}

\section*{Appendix}~

Here we raise the issue mentioned in Remark~\ref{Fortsetzung_nichttrivial_Remark}.($\gamma$) that assumption \eqref{extra} from Corollary~\ref{Fortsetzung_nichttrivial_Korollar} is not satisfied if $u_0$ is $\tfrac{2\pi}{j}$-periodic and $j\in\N$ is not a divisor of $k_1$. Let us first prove that $\phi_0^*$ (spanning $\ker L_{u_0}^*$) inherits several properties from $u_0'$ (spanning $\ker L_{u_0}$).

\begin{Proposition} \label{parity} Let $u_0\in H^2_{\text{per}}(0,2\pi)$ be a non-constant non-degenerate solution of \eqref{TWE} for $f_1=0$ and let $\ker L^*_{u_0}=\spann\{\phi_0^*\}$. Then the following holds:
\begin{itemize}
\item[(i)] If $u_0$ is $\frac{2\pi}{j}$-periodic with $j\in \N$ then $\phi_0^*$ is $\frac{2\pi}{j}$-periodic.
\item[(ii)] If $\omega=0$ and if $u_0$ is even then $\phi_0^*$ is odd.
\end{itemize}
\end{Proposition}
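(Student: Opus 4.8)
The plan is to exploit that both $L_{u_0}$ and its $L^2$-adjoint $L_{u_0}^*$ respect whatever symmetry $u_0$ possesses, and then to pin down $\phi_0^*$ by a Fredholm index count on a suitable invariant subspace. First I would record the adjoint with respect to $\langle v,w\rangle_2=\RT\int_0^{2\pi}v\overline w\,ds$. A short integration by parts (using periodicity together with the self-adjointness of $-d\tfrac{d^2}{ds^2}$, of $\iu\omega\tfrac{d}{ds}$, and of the map $\varphi\mapsto-u_0^2\overline\varphi$ with respect to this real inner product) gives
$$ L_{u_0}^*\psi=-d\psi''+\iu\omega\psi'+(\zeta+\iu-2|u_0|^2)\psi-u_0^2\overline\psi, $$
so that $L_{u_0}^*$ differs from $L_{u_0}$ only by replacing $-\iu$ with $+\iu$. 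In particular the coefficients of $L_{u_0}^*$ share the periodicity and parity of those of $L_{u_0}$.

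The common mechanism I would isolate is this: suppose $W\subset L^2(0,2\pi)$ is a closed subspace such that $W$ and $W^{\perp_{L^2}}$ are both invariant under $L_{u_0}$ (equivalently, $W$ reduces $L_{u_0}$), that $L_{u_0}|_W$ is still index-zero Fredholm, and that $u_0'\in W$. Then $\ker(L_{u_0}|_W)=\ker L_{u_0}\cap W=\spann\{u_0'\}$ by non-degeneracy, so $L_{u_0}|_W$ has one-dimensional cokernel; since $W$ reduces $L_{u_0}$, the Hilbert-space adjoint of $L_{u_0}|_W$ is $L_{u_0}^*|_W$, whence $\dim(\ker L_{u_0}^*\cap W)=\dim\ker(L_{u_0}^*|_W)=1$. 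As $\ker L_{u_0}^*=\spann\{\phi_0^*\}$ is itself one-dimensional, this forces $\phi_0^*\in W$. The index-zero Fredholm property of $L_{u_0}|_W$ follows because, by Remark~\ref{fredholm_etc}, $L_{u_0}$ is a compact perturbation of the isomorphism $-d\tfrac{d^2}{ds^2}+\sign(d)$, and this isomorphism leaves $W$ invariant as well.

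For (i) I would take $W$ to be the space of $\tfrac{2\pi}{j}$-periodic functions, i.e. the closed span of the Fourier modes $\eu^{\iu m s}$ with $j\mid m$. Since $|u_0|^2$ and $u_0^2$ inherit the $\tfrac{2\pi}{j}$-periodicity of $u_0$, both $W$ and $W^{\perp_{L^2}}$ (the span of the modes with $j\nmid m$) are invariant under $L_{u_0}$; the only nontrivial check is that the conjugation term $-u_0^2\overline\varphi$ preserves each, which holds because $m\mapsto-m$ preserves the residue class of $m$ modulo $j$. Moreover $u_0'\in W$, so the mechanism yields $\phi_0^*\in W$, i.e. $\phi_0^*$ is $\tfrac{2\pi}{j}$-periodic. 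For (ii), with $\omega=0$, I would take $W$ to be the subspace of odd functions: here $L_{u_0}$ commutes with the parity operator $P\varphi(s)=\varphi(-s)$ precisely because $\omega=0$ removes the only parity-reversing term $\iu\omega\tfrac{d}{ds}$, while evenness of $u_0$ makes $|u_0|^2$ and $u_0^2$ even and hence makes $-u_0^2\overline\varphi$ preserve parity. Thus $W$ (odd) and $W^{\perp_{L^2}}$ (even) are invariant, and $u_0'$ is odd, so $u_0'\in W$; the mechanism gives $\phi_0^*\in W$, i.e. $\phi_0^*$ is odd.

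The main obstacle is the correct treatment of the conjugation term $-u_0^2\overline\varphi$, which is the only $\R$-linear-but-not-$\C$-linear part of $L_{u_0}$ and the part coupling the Fourier modes $m$ and $-m$. It is exactly this term that blocks a naive ``$L_{u_0}$ commutes with the $\tfrac{2\pi}{j}$-shift, hence preserves its eigenspaces'' argument, since the shift is $\C$-linear while $L_{u_0}$ is not; consequently the invariance of $W$ and $W^{\perp_{L^2}}$ must be checked directly through the mode-coupling (respectively parity) bookkeeping, and one must confirm that passing to $L_{u_0}|_W$ retains both the index-zero Fredholm property and the exact kernel $\spann\{u_0'\}$. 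Once invariance and the Fredholm count are secured, the identification of $\phi_0^*$ is immediate.
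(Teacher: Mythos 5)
Your proof is correct and follows essentially the same route as the paper: restrict $L_{u_0}$ to the invariant subspace of $\tfrac{2\pi}{j}$-periodic (resp.\ odd) functions, use non-degeneracy and the index-zero Fredholm property of the restriction to obtain a one-dimensional cokernel, identify the adjoint of the restriction with the restriction of the adjoint, and conclude $\phi_0^*\in W$ by a dimension count. Your explicit checks (the formula for $L_{u_0}^*$, the invariance of $W^{\perp_{L^2}}$ so that restriction commutes with taking adjoints, and the Fourier-mode bookkeeping for the conjugation term) are details the paper leaves implicit; the only slip is that $m\mapsto -m$ does not preserve residue classes mod $j$ in general, but it does preserve the dichotomy $j\mid m$ versus $j\nmid m$, which is all your argument actually needs.
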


\begin{proof}
(i) By assumption we have that $\ker L_{u_0} = \spann\{u_0'\}$ and $u_0'$ is a $\frac{2\pi}{j}$-periodic function. Let us define $D \coloneqq \{\varphi\in H_{\text{per}}^2(0,2\pi):\varphi \text{ is $\frac{2\pi}{j}$-periodic}\}$ and similarly $L^2_{j}(0,2\pi)=\{\varphi\in L^2(0,2\pi):\varphi \text{ is $\frac{2\pi}{j}$-periodic}\}$. If we consider the restriction
$$
L_{u_0}^\#:\left\{\begin{array}{rcl}
D & \to & L^2_j(0,2\pi), \vspace{\jot} \\
\varphi & \mapsto & L_{u_0} \varphi,
\end{array}
\right.
$$
then $L_{u_0}^\#$ is again an index-zero Fredholm operator with $\ker L_{u_0}^\#= \spann\{u_0'\}$. Further we have $(L_{u_0}^\#)^* = (L_{u_0}^*)^\#$ where
$$
(L_{u_0}^*)^\#:\left\{\begin{array}{rcl}
D & \to & L^2_j(0,2\pi), \vspace{\jot} \\
\varphi & \mapsto & L_{u_0}^* \varphi
\end{array}
\right.
$$ 
is the restriction of the adjoint.
But since $1=\dim \ker (L_{u_0}^*)^\# = \dim \ker L_{u_0}^*$ it follows that $\ker (L_{u_0}^*)^\# = \ker L_{u_0}^*$ and hence $\phi_0^* \in D$ as claimed.

\medskip
 
The proof of (ii) is very similar. Due to the assumption $\omega=0$ we can restrict both the domain and the codomain of $L_{u_0}$ to odd functions and observe that it is still an index-zero Fredholm operator.
\end{proof}

Instead of $k_1\in \N$ let us consider a perturbation $k_1(\epsilon)\in \R\setminus\{k_1\}$ with $\lim_{\epsilon\to 0} k_1(\epsilon)=k_1$. For $\epsilon\approx 0$ one may have maximally connected continua ${\mathcal C}_\epsilon^+$ as described in Theorem~\ref{Fortsetzung_nichttrivial}. In a topological sense one can describe $\liminf \{\mathcal C_{\epsilon}^+:\epsilon^{-1}\in\N\}$ and $\limsup \{\mathcal C_{\epsilon}^+:\epsilon^{-1}\in\N\}$ as in \cite{Whyburn}. However, having in mind sequences of loops degenerating to one point, we do not intend to make any existence statement about a bifurcating branch obtained through such a topological limiting procedure. Let us abbreviate by $e_{\epsilon}(s)$ the periodic extension of $[0,2\pi)\to\C, \, s\mapsto \eu^{\iu k_1(\epsilon)s}$ onto $\R$. Note that
\begin{align*}
&\hspace{0.2cm}\IT \int_0^{2\pi} e_{\epsilon} (s+\sigma_{0,\epsilon}) \overline{\phi_0^*(s)} \, ds =\IT \int_0^{2\pi} \eu^{\iu k_1(\epsilon)s} \overline{\phi_{\sigma_{0,\epsilon}}^*(s)} \, ds = \IT \int_{-\sigma_{0,\epsilon}}^{2\pi-\sigma_{0,\epsilon}} \eu^{\iu k_1(\epsilon)(s+\sigma_{0,\epsilon})} \overline{\phi_0^*(s)} \, ds \\
&=\cos(k_1(\epsilon)\sigma_{0,\epsilon}) \IT \int_{-\sigma_{0,\epsilon}}^{2\pi-\sigma_{0,\epsilon}} \eu^{\iu k_1(\epsilon)s} \overline{\phi_0^*(s)} \, ds+\sin(k_1(\epsilon)\sigma_{0,\epsilon}) \RT \int_{-\sigma_{0,\epsilon}}^{2\pi-\sigma_{0,\epsilon}} \eu^{\iu k_1(\epsilon)s} \overline{\phi_0^*(s)} \, ds
\end{align*}
so that assumption \eqref{eq:sigma_0} from Theorem~\ref{Fortsetzung_nichttrivial} becomes
\begin{equation*}
\tan(k_1(\epsilon)\sigma_{0,\epsilon}) =\frac{\int_{-\sigma_{0,\epsilon}}^{2\pi-\sigma_{0,\epsilon}} \cos(k_1(\epsilon)s) \IT \phi_0^*(s)  -\sin(k_1(\epsilon)s) \RT \phi_0^*(s) \, ds}{\int_{-\sigma_{0,\epsilon}}^{2\pi-\sigma_{0,\epsilon}} \sin(k_1(\epsilon)s) \IT \phi_0^*(s)+ \cos(k_1(\epsilon)s) \RT \phi_0^*(s) \, ds}. 
\end{equation*}
One may expect that if (as a result of such a limiting procedure) a bifurcating branch at $k_1=\lim_{\epsilon\to 0} k_1(\epsilon)$ exists then it bifurcates at $\sigma_0=\lim_{\epsilon\to 0} \sigma_{0,\epsilon}$ determined from
\begin{align*}
\tan(k_1\sigma_0) &= \lim_{\epsilon\to 0} \frac{\int_{-\sigma_{0,\epsilon}}^{2\pi-\sigma_{0,\epsilon}} \cos(k_1(\epsilon)s) \IT \phi_0^*(s)  -\sin(k_1(\epsilon)s) \RT \phi_0^*(s) \, ds}{\int_{-\sigma_{0,\epsilon}}^{2\pi-\sigma_{0,\epsilon}} \sin(k_1(\epsilon)s) \IT \phi_0^*(s)+ \cos(k_1(\epsilon)s) \RT \phi_0^*(s) \, ds}\\
& =\frac{\int_{-\sigma_{0}}^{2\pi-\sigma_{0}} s\sin(k_1 s)\IT \phi_0^*(s)+s \cos(k_1 s)\RT \phi_0^*(s)\,ds}{\int_{-\sigma_{0}}^{2\pi-\sigma_{0}} s\sin(k_1 s)\RT \phi_0^*(s)-s \cos(k_1 s)\IT \phi_0^*(s)\,ds}.
\end{align*}
However, this is not supported by our numerical experiments and we have to leave the correct determination of $\sigma_0$ in this case as an open question.

\section*{Acknowledgments} Funded by the Deutsche Forschungsgemeinschaft (DFG, German Research Foundation) -- Project-ID 258734477 -- SFB 1173.

\bibliographystyle{plainurl}	
\bibliography{bibliography}

\begin{thebibliography}{10}

\bibitem{allgower-georg-1990}
Eugene~L. Allgower and Kurt Georg.
\newblock {\em Numerical continuation methods}, volume~13 of {\em Springer
  Series in Computational Mathematics}.
\newblock Springer-Verlag, Berlin, 1990.
\newblock An introduction.
\newblock \href {https://doi.org/10.1007/978-3-642-61257-2}
  {\path{doi:10.1007/978-3-642-61257-2}}.

\bibitem{Bandle}
Catherine Bandle and Wolfgang Reichel.
\newblock Solutions of quasilinear second-order elliptic boundary value
  problems via degree theory.
\newblock In {\em Stationary partial differential equations. {V}ol. {I}},
  Handb. Differ. Equ., pages 1--70. North-Holland, Amsterdam, 2004.
\newblock \href {https://doi.org/10.1016/S1874-5733(04)80003-2}
  {\path{doi:10.1016/S1874-5733(04)80003-2}}.

\bibitem{CrRab_bifurcation}
Michael~G. Crandall and Paul~H. Rabinowitz.
\newblock Bifurcation from simple eigenvalues.
\newblock {\em J. Functional Analysis}, 8:321--340, 1971.

\bibitem{Deimling}
Klaus Deimling.
\newblock {\em Nonlinear functional analysis}.
\newblock Springer-Verlag, Berlin, 1985.
\newblock \href {https://doi.org/10.1007/978-3-662-00547-7}
  {\path{doi:10.1007/978-3-662-00547-7}}.

\bibitem{DelHara_periodic}
Lucie Delcey and Mariana Haragus.
\newblock Periodic waves of the {L}ugiato-{L}efever equation at the onset of
  {T}uring instability.
\newblock {\em Philos. Trans. of the Roy. Soc. A}, 376(2117):20170188, 2018.
\newblock \href {https://doi.org/10.1098/rsta.2017.0188}
  {\path{doi:10.1098/rsta.2017.0188}}.

\bibitem{Gaertner_et_al}
J.~G\"artner, P.~Trocha, R.~Mandel, C.~Koos, T.~Jahnke, and W.~Reichel.
\newblock Bandwidth and conversion efficiency analysis of dissipative kerr
  soliton frequency combs based on bifurcation theory.
\newblock {\em Phys. Rev. A}, 100:033819, Sep 2019.
\newblock URL: \url{https://link.aps.org/doi/10.1103/PhysRevA.100.033819},
  \href {https://doi.org/10.1103/PhysRevA.100.033819}
  {\path{doi:10.1103/PhysRevA.100.033819}}.

\bibitem{Gasmi_Peng_Koos_Reichel}
E.~Gasmi, H.~Peng, C.~Koos, and W.~Reichel.
\newblock Bandwidth and conversion-efficiency analysis of {K}err soliton combs
  in dual-pumped resonators with anomalous dispersion.
\newblock Preprint, 2022.

\bibitem{Godey_2017}
Cyril Godey.
\newblock A bifurcation analysis for the lugiato-lefever equation.
\newblock {\em The European Physical Journal D}, 71(5):131, May 2017.
\newblock \href {https://doi.org/10.1140/epjd/e2017-80057-2}
  {\path{doi:10.1140/epjd/e2017-80057-2}}.

\bibitem{Godey_et_al2014}
Cyril Godey, Irina~V. Balakireva, Aur\'elien Coillet, and Yanne~K. Chembo.
\newblock Stability analysis of the spatiotemporal {L}ugiato-{L}efever model
  for {K}err optical frequency combs in the anomalous and normal dispersion
  regimes.
\newblock {\em Phys. Rev. A}, 89:063814, 2014.
\newblock URL: \url{http://link.aps.org/doi/10.1103/PhysRevA.89.063814}, \href
  {https://doi.org/10.1103/PhysRevA.89.063814}
  {\path{doi:10.1103/PhysRevA.89.063814}}.

\bibitem{Kato:101545}
Tosio Kato.
\newblock {\em {Perturbation theory for linear operators; 2nd ed.}}
\newblock Grundlehren der mathematischen Wissenschaften : a series of
  comprehensive studies in mathematics. Springer, Berlin, 1976.
\newblock URL: \url{https://cds.cern.ch/record/101545}.

\bibitem{kielhoefer2011bifurcation}
H.~Kielh{\"o}fer.
\newblock {\em Bifurcation Theory: An Introduction with Applications to Partial
  Differential Equations}.
\newblock Applied Mathematical Sciences. Springer New York, 2011.
\newblock URL: \url{https://books.google.de/books?id=wrqZj3BYZ7YC}.

\bibitem{Lugiato_Lefever1987}
L.~A. Lugiato and R.~Lefever.
\newblock Spatial dissipative structures in passive optical systems.
\newblock {\em Phys. Rev. Lett.}, 58:2209--2211, 1987.
\newblock URL: \url{http://link.aps.org/doi/10.1103/PhysRevLett.58.2209}, \href
  {https://doi.org/10.1103/PhysRevLett.58.2209}
  {\path{doi:10.1103/PhysRevLett.58.2209}}.

\bibitem{Mandel}
Rainer Mandel and Wolfgang Reichel.
\newblock A priori bounds and global bifurcation results for frequency combs
  modeled by the {L}ugiato-{L}efever equation.
\newblock {\em SIAM J. Appl. Math.}, 77(1):315--345, 2017.
\newblock \href {https://doi.org/10.1137/16M1066221}
  {\path{doi:10.1137/16M1066221}}.

\bibitem{marin2017microresonator}
Pablo Marin-Palomo, Juned~N Kemal, Maxim Karpov, Arne Kordts, Joerg Pfeifle,
  Martin~HP Pfeiffer, Philipp Trocha, Stefan Wolf, Victor Brasch, Miles~H
  Anderson, et~al.
\newblock Microresonator-based solitons for massively parallel coherent optical
  communications.
\newblock {\em Nature}, 546(7657):274--279, 2017.

\bibitem{Miyaji_Ohnishi_Tsutsumi2010}
T.~Miyaji, I.~Ohnishi, and Y.~Tsutsumi.
\newblock Bifurcation analysis to the {L}ugiato-{L}efever equation in one space
  dimension.
\newblock {\em Phys. D}, 239(23-24):2066--2083, 2010.
\newblock URL: \url{http://dx.doi.org/10.1016/j.physd.2010.07.014}, \href
  {https://doi.org/10.1016/j.physd.2010.07.014}
  {\path{doi:10.1016/j.physd.2010.07.014}}.

\bibitem{Parra-Rivas2018}
Pedro Parra-Rivas, Dami\`{a} Gomila, Lendert Gelens, and Edgar Knobloch.
\newblock Bifurcation structure of localized states in the lugiato-lefever
  equation with anomalous dispersion.
\newblock {\em Phys. Rev. E}, 97(4):042204, 2018.
\newblock URL:
  \url{https://journals.aps.org/pre/abstract/10.1103/PhysRevE.97.042204}, \href
  {https://doi.org/10.1103/PhysRevE.97.042204}
  {\path{doi:10.1103/PhysRevE.97.042204}}.

\bibitem{Parra-Rivas2014}
Pedro Parra-Rivas, Dami\`{a} Gomila, Fran\c{c}ois Leo, St\'{e}phane Coen, and
  Lendert Gelens.
\newblock Third-order chromatic dispersion stabilizes {K}err frequency combs.
\newblock {\em Opt. Lett.}, 39(10):2971--2974, 2014.
\newblock URL: \url{http://ol.osa.org/abstract.cfm?URI=ol-39-10-2971}, \href
  {https://doi.org/10.1364/OL.39.002971} {\path{doi:10.1364/OL.39.002971}}.

\bibitem{Parra-Rivas2016}
Pedro Parra-Rivas, Edgar Knobloch, Dami\`{a} Gomila, and Lendert Gelens.
\newblock {Dark solitons in the Lugiato-Lefever equation with normal
  dispersion}.
\newblock {\em Phys. Rev. A}, 93(6):1--17, 2016.
\newblock URL:
  \url{https://journals.aps.org/pra/abstract/10.1103/PhysRevA.93.063839}, \href
  {https://doi.org/10.1103/PhysRevA.93.063839}
  {\path{doi:10.1103/PhysRevA.93.063839}}.

\bibitem{Perinet}
Nicolas P{\'e}rinet, Nicolas Verschueren, and Saliya Coulibaly.
\newblock Eckhaus instability in the lugiato-lefever model.
\newblock {\em The European Physical Journal D}, 71(9):243, Sep 2017.
\newblock \href {https://doi.org/10.1140/epjd/e2017-80078-9}
  {\path{doi:10.1140/epjd/e2017-80078-9}}.

\bibitem{picque2019frequency}
Nathalie Picqu{\'e} and Theodor~W H{\"a}nsch.
\newblock Frequency comb spectroscopy.
\newblock {\em Nature Photonics}, 13(3):146--157, 2019.

\bibitem{Schmitt}
Klaus Schmitt.
\newblock Positive solutions of semilinear elliptic boundary value problems.
\newblock In {\em Topological methods in differential equations and inclusions
  ({M}ontreal, {PQ}, 1994)}, volume 472 of {\em NATO Adv. Sci. Inst. Ser. C:
  Math. Phys. Sci.}, pages 447--500. Kluwer Acad. Publ., Dordrecht, 1995.

\bibitem{Stanislavova_Stefanov}
Milena Stanislavova and Atanas~G. Stefanov.
\newblock Asymptotic stability for spectrally stable {L}ugiato-{L}efever
  solitons in periodic waveguides.
\newblock {\em J. Math. Phys.}, 59(10):101502, 12, 2018.
\newblock \href {https://doi.org/10.1063/1.5048017}
  {\path{doi:10.1063/1.5048017}}.

\bibitem{Taheri_2017}
Hossein Taheri, Andrey~B. Matsko, and Lute Maleki.
\newblock Optical lattice trap for kerr solitons.
\newblock {\em The European Physical Journal D}, 71(6), jun 2017.
\newblock URL: \url{https://doi.org/10.1140%2Fepjd%2Fe2017-80150-6}, \href
  {https://doi.org/10.1140/epjd/e2017-80150-6}
  {\path{doi:10.1140/epjd/e2017-80150-6}}.

\bibitem{trocha2018ultrafast}
Philipp Trocha, M~Karpov, D~Ganin, Martin~HP Pfeiffer, Arne Kordts, S~Wolf,
  J~Krockenberger, Pablo Marin-Palomo, Claudius Weimann, Sebastian Randel,
  et~al.
\newblock Ultrafast optical ranging using microresonator soliton frequency
  combs.
\newblock {\em Science}, 359(6378):887--891, 2018.

\bibitem{Udem2002}
Th. Udem, R.~Holzwarth, and T.~W. H{\"{a}}nsch.
\newblock {Optical frequency metrology}.
\newblock {\em Nature}, 416(6877):233--237, 2002.
\newblock URL: \url{http://www.nature.com/doifinder/10.1038/416233a}, \href
  {https://doi.org/10.1038/416233a} {\path{doi:10.1038/416233a}}.

\bibitem{Whyburn}
Gordon~Thomas Whyburn.
\newblock {\em Analytic topology}.
\newblock American Mathematical Society Colloquium Publications, Vol. XXVIII.
  American Mathematical Society, Providence, R.I., 1963.

\bibitem{yang2017microresonator}
Qi-Fan Yang, Myoung-Gyun Suh, Ki~Youl Yang, Xu~Yi, and Kerry~J Vahala.
\newblock Microresonator soliton dual-comb spectroscopy.
\newblock In {\em CLEO: Science and Innovations}, pages SM4D--4. Optica
  Publishing Group, 2017.

\end{thebibliography}
	
\end{document}